\documentclass[12pt, reqno]{amsart}

\usepackage{amsmath}
\usepackage{amssymb}
\usepackage{amsthm}
\usepackage{mathtools}
\usepackage{mathrsfs}
\usepackage{graphicx}
\usepackage[margin=1.12in]{geometry} 
\usepackage{appendix}
\usepackage{xcolor}
\usepackage[hidelinks,hyperindex,breaklinks]{hyperref}

\usepackage[mathscr]{eucal}

\author{Jianjun Fu}
\address{School of Mathematical Sciences, Peking University,
	No 5. Yiheyuan Road, Beijing, 100871, China}
\email{2601110007@stu.pku.edu.cn}

\author{Shuai Guo}
\address{School of Mathematical Sciences, Peking University,
	No 5. Yiheyuan Road, Beijing, 100871, China}
\email{guoshuai@math.pku.edu.cn}

\author{Qingsheng Zhang}
\address{School of Sciences, Great Bay University,
	Great Bay Institute for Advanced Study,
	Dongguan, 523000, China}
\email{zhangqingsheng@gbu.edu.cn}

\newtheorem{manualtheoreminner}{Theorem}
\newenvironment{manualtheorem}[1]{%
	\renewcommand\themanualtheoreminner{#1}%
	\manualtheoreminner
}{\endmanualtheoreminner}

\newtheorem{dummy}{}[section]
\newtheorem{lemma}[dummy]{Lemma}
\newtheorem{proposition}[dummy]{Proposition}
\newtheorem{theorem}[dummy]{Theorem}

\theoremstyle{definition}

\newtheorem{definition}[dummy]{Definition}

\newtheorem{remark}[dummy]{Remark}

\def\beq{\begin{equation}}
\def\eeq{\end{equation}}

\def\<{\langle}
\def\>{\rangle}

\def\cR{\mathcal{R}}
\def\cT{\mathcal{T}}

\def\cO{\mathcal{O}}

\newcommand{\cq}{y}
\def\Aut{\operatorname{Aut}}

\newcommand\ba{\mathbf a}
\newcommand\bg{\mathbf g}

\newcommand{\bM}{\overline{\mathcal{M}}}

\newcommand{\vir}{\mathrm{vir}}
\newcommand{\C}{\mathbb{C}}
\newcommand{\Q}{\mathbb{Q}}
\newcommand{\Z}{\mathbb{Z}}

\newcommand{\Tot}{{\operatorname{Tot}}}

\newcommand{\bdot}{\mathbin{\boldsymbol{\cdot}}}
\newsavebox{\circledstarbox}
\newcommand{\circledstar}{\mathbin{\mathpalette\circledstaraux\relax}}
\newcommand{\circledstaraux}[2]{%
  \sbox{\circledstarbox}{$#1\star$}%
  \ooalign{\hfil\usebox{\circledstarbox}\hfil\crcr
    \hfil$#1\vcenter{\hbox{%
      \resizebox{1.2\wd\circledstarbox}{!}{$#1\bigcirc$}}}$\hfil\crcr}}
\title[NS blowup equations and local-relative correspondence]
{The inverse local-relative correspondence and \\ the {Nekrasov--Shatashvili} vanishing blowup equation}

\date{}

\begin{document}

\begin{abstract}
	\sloppy
	We prove the Nekrasov--Shatashvili vanishing blowup equation for the local Gromov--Witten theory of smooth weak del Pezzo surfaces, by identifying its unique formal solution with the inverse of the local-relative correspondence. We construct this inverse through an associative set-partition convolution and establish the identification using Zagier's recursion for the Bloch--Okounkov $n$-point function. The proof applies in all genera and includes non-toric surfaces.
\end{abstract}

\maketitle

\setcounter{section}{-1}
\setcounter{tocdepth}{1}
\tableofcontents

\section{Introduction}
	Blowup equations impose functional relations on partition functions and provide constraints on their enumerative invariants. 
	{Early work on blowup equations in enumerative geometry can be traced back to Fintushel and Stern’s study of Donaldson invariants~\cite{FS1996}.}
	In~\cite{NY2005a,NY2005b}, Nakajima and Yoshioka proved blowup equations for cohomological and $K$-theoretic instanton partition functions. 
	Through geometric engineering~\cite{KKV1997}, these equations are related to refined topological strings. Vanishing blowup equations for refined topological strings were proposed in~\cite{GHKK2017}, and in~\cite{HSW2018}, Huang, Sun, and Wang developed a systematic study of both vanishing and unity blowup equations. These equations constrain the refined BPS invariants of local Calabi--Yau threefolds, although their validity remains conjectural in general.

	Let $S$ be a {smooth weak del Pezzo surface}, and let $K_S$ denote its canonical line bundle. The Nekrasov--Shatashvili (NS) limit of the vanishing blowup equation relates the topological and NS specializations of the refined BPS free energy of $\operatorname{Tot}(K_S)$, the associated noncompact Calabi--Yau threefold. These specializations are expected to recover the local Gromov--Witten (GW) theory of $\operatorname{Tot}(K_S)$ and the maximal-contact relative GW theory of $(S,E)$, respectively, where $E\in|-K_S|$ is smooth. The resulting NS vanishing blowup equation therefore predicts a relation between the local and relative GW generating series.
	 These considerations motivate the following \emph{NS vanishing blowup equation}: 
	\begin{equation}\label{eq:blowup-local-relative}
		\sum_{N\in\Z}(-1)^N\exp\Big(-(N+\tfrac12)D \widehat F^{S/E}({\bf Q};\hbar)
		+\widehat F^{K_S}\big(\cq^{(N+\frac12)E}{\bf Q};\mathrm i\hbar\big)\Big)=0.
	\end{equation}
	Here $\widehat F^{K_S}({\bf Q};\hbar)$ and $\widehat F^{S/E}({\bf Q};\hbar)$ denote the sign-twisted all-genus local and relative GW potentials, $\cq=e^{-\hbar}$ with $\hbar$ the string parameter, and $D$ is a differential operator which acts on the Novikov monomial $\mathbf Q^\beta$ of a curve class $\beta$ by
	$D\mathbf Q^\beta=(E\cdot\beta)\mathbf Q^\beta$.
	See~\S \ref{subsec:normalization} for the notation and see Appendix~\ref{app:refined-BPS} for its derivation from the vanishing blowup equation for refined BPS free energy.
	
	On the other hand, in~\cite{BFGW2021}, Bousseau, Fan, Guo, and Wu established a local-relative correspondence in all genera.
	For a smooth projective surface $S$ with a connected smooth nef anticanonical divisor $E$, their correspondence expresses the local GW invariants in terms of maximal-contact relative GW invariants and stationary GW invariants of the elliptic curve $E$. 
	Precisely, for $g\geq 0$,  \vspace{-0.2cm}
	\[
	F_g^{K_S} = (-1)^g F_g^{S/E}+
	\sum_{n\geq 0} \!\!\!\!\sum_{\substack{ g=h+g_1+\dots+g_n,\\ \ba=(a_1,\dots,a_n)\in \Z_{\geq 0}^n, \\ (a_j,g_j) \neq (0,0),\,\sum_{j=1}^n a_j=2h-2} }\!\!\!\!\! 
	\frac{(-1)^{h-1} \widetilde F_{h, \ba}^E}{|\Aut (\ba, \bg)|}
	\prod_{j=1}^n   (-1)^{g_j-1} D^{a_j+2} F_{g_j}^{S/E}\,.
	\] 
	Here \(\widetilde F^E_{h,\mathbf a}\) denotes a shifted stationary GW series of \(E\). 
	In particular, for genus zero and one, the formula reads
	\begin{equation}\label{eqn:LRC-0-and-1-intro}
		\begin{aligned}
			F^{K_S}_0({\bf Q})&=F^{S/E}_0({\bf Q}),\\
			F^{K_S}_1({\bf Q})&=-F^{S/E}_1({\bf Q})
			+\widetilde F^E_{1,\emptyset}(\mathbf q).
		\end{aligned}
	\end{equation}
	Here $\widetilde F^E_{1,\emptyset}$ is the shifted genus-one elliptic GW series {evaluated at ${\bf q}={\bf q}({\bf Q})$ (see~\eqref{eq:elliptic-coordinate-normalized}).} 
	See \S \ref{subsec:local-relative-correspondence} for the notation and conventions.

	This raises the question of whether the local-relative correspondence is equivalent to the NS vanishing blowup equation. The two relations, however, have very different forms.
	We compare these relations using an associative set-partition convolution and a compatible formal action on generating series. 
	We express the two relations in terms of the formal action and show that the corresponding coefficient families are convolution inverses.
	Thus the inverse local-relative formula gives the unique formal solution to the  NS vanishing blowup equation.
	As a consequence, this establishes the  vanishing blowup equation for every smooth weak del Pezzo pair, including non-toric surfaces.

\subsection{Convolution multiplication}\label{subsec:intro-convolution}
We use a convolution on symmetric formal power series families to compare the two relations.
Let $R$ be a commutative $\mathbb Q$-algebra and set
\[
\operatorname{Sym}(R)=\prod_{n\geq1}R[\![z_1,\ldots,z_n]\!]^{S_n}.
\]
For $\mathscr A,\mathscr B\in\operatorname{Sym}(R)$, define
\[
(\mathscr A\star\mathscr B)(z_1,\ldots,z_n)
=\sum_{\{I_1,\ldots,I_k\}\in\mathcal P_n}
\mathscr A(|z_{I_1}|,\ldots,|z_{I_k}|)
\prod_{i=1}^k\mathscr B(z_{I_i}),
\]
where $\mathcal P_n$ is the set of unordered set partitions of $\{1,\ldots,n\}$, $z_I=(z_i)_{i\in I}$, and $|z_I|=\sum_{i\in I}z_i$.
The convolution $\star$ is associative and has the two-sided identity $\mathscr I(z_1,\ldots,z_n)=\delta_{n,1}$.
Moreover, $\mathscr A$ has a unique two-sided inverse under $\star$ if and only if $\mathscr A(z)$ is invertible in $R[[z]]$.
{These properties are proved in Proposition~\ref{prop:convolution-basic-properties}.}

For the application to GW series, we take $R=\mathbb Q[E_2,E_4,E_6]\subset\mathbb Q[[\mathbf q]]$.
A family $\mathscr A\in\operatorname{Sym}(R)$ is \emph{normalized} if, for every $n\geq1$,
\[
\begin{aligned}
\mathscr A(z_1,\ldots,z_n)-\delta_{n,1}&\in z_1\cdots z_nR[\![z_1,\ldots,z_n]\!],\\
\mathscr A(-z_1,\ldots,-z_n)&=(-1)^{n+1}\mathscr A(z_1,\ldots,z_n).
\end{aligned}
\]
It satisfies the \emph{divisor equation} if, for every $n\geq1$,
 	\begin{equation}\label{eq:kernel-divisor-identity}
 		\mathbf q\,\partial_{\mathbf q}\mathscr A(z_1,\ldots,z_n)
 		=[w]\mathscr A(z_1,\ldots,z_n,w).
 	\end{equation}
where $[w]$ denotes the coefficient of $w$.
The normalized families satisfying the divisor equation are closed under $\star$ and convolution inversion.
 
The stationary GW series of the elliptic curve $E$ give the family
\[
\mathscr F(z_1,\ldots,z_n)
:=\delta_{n,1}+(z_1+\cdots+z_n)
\sum_{\substack{g\geq0,\ a_1,\ldots,a_n\geq0\\|\ba|=2g-2}}
F^E_{g,\ba}(\mathbf q)\prod_{i=1}^n z_i^{a_i+1},
\qquad n\geq1,
\]
where $F^E_{g,\ba}(\mathbf q)$ is defined in~\eqref{eq:F_E} and $|\ba|=\sum_i a_i$.
This is a normalized family satisfying the divisor equation~\eqref{eq:kernel-divisor-identity}.

\subsection{Formal action of a normalized family}\label{subsec:intro-formal-action}

We call a series $\mathcal S(\mathbf Q;\lambda)$ \emph{normalized} if
\[
\begin{aligned}
\mathcal S(\mathbf Q;\lambda)&=\sum_{g\geq0}\lambda^{2g-1}\mathcal S_g(\mathbf Q),\qquad
\mathcal S_g(\mathbf Q)-\delta_{g,0}\frac{(\log\mathbf Q^E)^2}{2(E\cdot E)}
&\in\mathbb Q[[\mathbf Q]]\qquad(g\geq0).
\end{aligned}
\]
A normalized family $\mathscr A$ satisfying the divisor equation admit a formal action $\bdot$ on normalized generating series  $\mathcal S(\mathbf Q;\lambda)$. We define \vspace{-0.2cm}
\[
\begin{aligned}
(\mathscr A\bdot\mathcal S)(\mathbf Q;\lambda)
:={}&\frac{\log(\mathbf q)}{\lambda(E\cdot E)}\log(\mathbf Q^E)
+\sum_{n\geq1}\frac{1}{n!}\,
\mathscr A(\widehat z_1,\ldots,\widehat z_n)
\left(\prod_{i=1}^n\widetilde{\mathcal S}(\mathbf Q;\lambda)\right),
\end{aligned}
\]
where \vspace{-0.2cm}
\[
\widetilde{\mathcal S}(\mathbf Q;\lambda)
=\mathcal S(\mathbf Q;\lambda)
-\frac{\log(\mathbf q)}{\lambda(E\cdot E)}\log(\mathbf Q^E).
\]
Here $\mathbf q$ is an auxiliary parameter independent of $\mathbf Q$, and $\widehat z_i=\lambda D$ acts on the $i$-th factor, with $D\mathbf q=0$.
Moreover, this action is compatible with the convolution:
\[
(\mathscr A\star\mathscr B)\bdot\mathcal S
=\mathscr A\bdot(\mathscr B\bdot\mathcal S),
\qquad
\mathscr I\bdot\mathcal S=\mathcal S.
\]

Let $\cT$ and $\cR$ be the normalized derivative series of the local and relative potentials.
The local-relative correspondence can be reformulated as 
	$$
	\cT=\mathscr F\bdot\cR,
	$$
	together with~\eqref{eqn:LRC-0-and-1-intro},
	where $\mathscr F\in \operatorname{Sym}(R)$ encodes the stationary GW series of the elliptic curve.
	This formal action has an inverse $\cR=\mathscr G\bdot\cT$, where $\mathscr G\in \operatorname{Sym}(R)$ is a left inverse of $\mathscr F$: 
	$$\mathscr G\star \mathscr F=\mathscr I.$$
	
On the other hand,
 the NS vanishing blowup equation has a unique solution that can be formulated as
	$$
	\cR=\mathscr V\bdot\cT,
	$$
	where $\mathscr V\in \operatorname{Sym}(R)$ is determined by its recursion relation with the $\Theta$-function.
{The equivalence of the two relations will follow once we show that $\mathscr G=\mathscr V$.}

\subsection{Main results}
  
{Since $\mathscr F(z)=1+O(z^2)$, its convolution inverse $\mathscr G=\mathscr F^{\star-1}$ exists. We first describe the  inverse coefficients and their theta characterization.}

\begin{manualtheorem}{I}\label{main_inverse_theorem}
The unique convolution inverse $\mathscr G=\mathscr F^{\star-1}$ of the   stationary   elliptic curve GW series  has the expansion
\[
\mathscr G(z_1,\ldots,z_n)
=\delta_{n,1}+(z_1+\cdots+z_n)
\sum_{\substack{g\geq1,\;\ba\in\Z_{\geq0}^n\\|\ba|=2g-2}}
G^E_{g,\ba}(\mathbf q)\prod_{i=1}^n z_i^{a_i+1},
\qquad n\geq1.
\]
For $n\geq1$ and $|\ba|=2g-2$, the coefficients $G^E_{g,\ba}$ are quasimodular forms of weight $2g-2+2n$ and satisfy
\[
\mathbf q\frac{d}{d\mathbf q}G^E_{g,\ba}
=G^E_{g,(\ba,0)},
\qquad
\left.G^E_{g,\ba}\right|_{\mathbf q=0}=0\quad(n\geq2).
\]
Furthermore, the generating series $G(z_1,\ldots,z_n)=z_1\cdots z_n\,\mathscr G(z_1,\ldots,z_n)$ is uniquely determined by the following theta relation, for every $n\geq1$:
$$
\sum_{k\geq 0}(-1)^k\sum_{\substack{I_1\sqcup\cdots\sqcup I_k\sqcup J=[n]\\I_1,\ldots,I_k\neq\emptyset}}
\bigg(\prod_{i=1}^k G(z_{I_i})\bigg)\Theta^{(k)}(|z_J|)=0,
$$
where $\Theta(z)=\Theta(z|\tau)$ is the normalized odd theta function defined in~\eqref{eq:delPezzo-Theta-expansion}, with $\mathbf q=e^{2\pi\mathrm i\tau}$ and $\Theta'(0)=1$.
The symbol $\Theta^{(k)}$ denotes its $k$-th derivative with respect to $z$.
The blocks $I_1,\ldots,I_k$ are unordered and counted once, while $J$ may be empty with $|z_\emptyset|=0$.
\end{manualtheorem}
 
The expansion and structural properties are established in Theorem~\ref{thm:inverse-kernel-properties}, 
while the theta characterization follows from Propositions~\ref{prop:solution-of-theta-blowup} and~\ref{prop:F-star-C}.
Moreover, the series \(G^E_{g,\mathbf a}\) serve as the universal coefficients of the inverse local-relative correspondence in Proposition~\ref{prop:general-LR-inverse}.

\begin{remark}
In Appendix~\ref{app:elliptic-stationary}, we give further representations of the generating series \(G\),
including a $\mathbf q$-bracket representation in Proposition~\ref{thm:appq-inverse-bracket}, and two finite rooted-tree expansions in Propositions~\ref{thm:appq-theta-tree} and~\ref{thm:appq-connected-tree}.
\end{remark}
 
We next identify this inverse with the formal solution of the NS vanishing blowup equation.
Using Zagier's recursion for the Bloch--Okounkov $n$-point function~\cite[Theorem~4]{Zagier2016},
	we prove that $\mathscr V$ is the right inverse of $\mathscr F$:
	\[
	\mathscr F\star\mathscr V=\mathscr I.
	\]
By associativity, we have 
	\[
	\mathscr G
	=\mathscr G\star(\mathscr F\star\mathscr V)
	=(\mathscr G\star\mathscr F)\star\mathscr V
	=\mathscr V.
	\]
	Thus, with the genus-zero and genus-one relations~\eqref{eqn:LRC-0-and-1-intro}, the NS vanishing blowup equation is equivalent to the inverse local-relative correspondence.

\begin{manualtheorem}{II}\label{main_theorem}
	Let $S$ be a smooth projective surface and let $E\subset S$ be a connected smooth nef anticanonical divisor with $E^2>0$. With the genus-zero and genus-one relations~\eqref{eqn:LRC-0-and-1-intro}, the local-relative correspondence is equivalent to the NS vanishing blowup equation~\eqref{eq:blowup-local-relative}.
	As a consequence, the NS vanishing blowup equation~\eqref{eq:blowup-local-relative} holds for every smooth weak del Pezzo pair $(S,E)$.
\end{manualtheorem}

\subsection{Outline of the paper}
The paper is organized as follows.
In \S \ref{sec:local-relative correspondence}, we recall the definitions of the local and relative GW invariants and their correspondence. We then normalize the associated potentials and rewrite the NS vanishing blowup equation in theta form.
In \S \ref{sec:inverse_LRC}, we construct the inverse local-relative correspondence using convolution and establish the structural properties of its universal coefficients.
In \S \ref{sec:blowup equation}, we solve the NS vanishing blowup equation and prove its equivalence to the local-relative correspondence.
In \S \ref{sec:examples}, we present three examples, including a non-toric del Pezzo surface.
Appendix~\ref{app:elliptic-stationary} gives further representations of $G$, the generating series associated with the convolution inverse of the stationary elliptic GW family. Appendix~\ref{app:refined-BPS} explains how the refined vanishing blowup equation leads to the NS equation considered here, through its Nekrasov--Shatashvili limit and the  GW identifications.

\subsection*{Acknowledgments}
{We thank Jie Zhou for carefully reading an earlier version of this paper and for many helpful suggestions.}

\subsection*{AI disclosure.}

We used ChatGPT to improve the exposition, assist with extensive
computational tests and checks, and prepare tables.
The mathematical content of this paper was written by the authors,
who take full responsibility for its correctness and for the final manuscript.

$$
$$

\section{Local-relative correspondence and the NS blowup equation}

\label{sec:local-relative correspondence}

We first recall the local and relative GW invariants, their generating series, and the local-relative correspondence for smooth projective surfaces with a connected smooth nef anticanonical divisor.
Starting in \S \ref{subsec:normalization}, we restrict to smooth weak del Pezzo surfaces.
We then rewrite the  NS vanishing blowup equation in theta form.

\subsection{Local-relative correspondence}\label{subsec:local-relative-correspondence}
Let $S$ be a smooth projective surface over $\C$, and let $E\subset S$ be a connected smooth nef anticanonical divisor.
We recall the correspondence between the local GW invariants of the total space $\Tot(\cO_S(-E))\cong\Tot(K_S)$ and the relative GW invariants of the pair $(S,E)$ established by Bousseau--Fan--Guo--Wu~\cite{BFGW2021}.

Let $\beta\in H_2(S,\Z)$ be a curve class such that $\beta\cdot E>0$.
We denote by $\bM_g(\cO_S(-E),\beta)$ the moduli space of genus-$g$ stable maps of class $\beta$ to $\Tot(\cO_S(-E))$,
and by $\bM_g(S/E,\beta)$ the moduli space of genus-$g$ relative stable maps of class $\beta$ to $(S,E)$ with only one contact condition of maximal tangency along $E$.
The moduli space $\bM_g(\cO_S(-E),\beta)$ has virtual dimension $0$, and we define the local invariants by
\[
N_{g,\beta}^{K_S} \coloneqq \deg\!\left([\bM_g(\cO_S(-E),\beta)]^\vir \right).
\]
The moduli space $\bM_g(S/E,\beta)$ has virtual dimension $g$, and we define the relative invariants by
\[
N_{g,\beta}^{S/E} \coloneqq \deg\!\left((-1)^g\lambda_g\cap[\bM_g(S/E,\beta)]^\vir\right),
\]
where $\lambda_g$ is the top Chern class of the Hodge bundle.

For every effective class $\beta\in H_2(S,\Z)$, let $\mathbf Q^\beta$ denote the corresponding monomial in the monoid algebra of effective curve classes, with multiplication $\mathbf Q^\beta\mathbf Q^\gamma=\mathbf Q^{\beta+\gamma}$.  
Following~\cite{BFGW2021}, we introduce generating series:
\begin{equation} \label{eq:F_K_S}
	 F_g^{K_S}({\bf Q}) \coloneqq  \delta_{g,0} {F^{K_S}_{\text{classical}}} ({\bf Q})  + {\delta_{g,1}} F_{\text{unstable}}^{K_S}({\bf Q}) + F_g^{K_S,+}({\bf Q})  \,,
\end{equation}
\begin{equation} \label{eq:F_SE}
	 F_g^{S/E}({\bf Q})  \coloneqq \delta_{g,0} {F^{K_S}_{\text{classical}}} ({\bf Q})  + {\delta_{g,1}} F_{\text{unstable}}^{S/E}({\bf Q}) + F_g^{S/E,+}({\bf Q})  \,,
\end{equation}
where 
\[F_{\text{classical}}^{K_S} ({\bf Q})  \coloneqq -\frac{1-\delta_{(E\cdot E),0}}{3!(E\cdot E)^2}\big(\log \mathbf{Q}^E\big)^3,\]
\[F_{\text{unstable}}^{K_S}({\bf Q})  \coloneqq\left(\frac{1-\delta_{(E\cdot E),0}}{(E\cdot E)}\frac{\chi(S)}{24}-\frac{1}{24}\right)\log \mathbf{Q}^E,\]
\[F_{\text{unstable}}^{S/E}({\bf Q})  \coloneqq - \frac{1-\delta_{(E\cdot E),0}}{(E\cdot E)}\frac{\chi(S)}{24}\log \mathbf{Q}^E ,\]
and the stable part is given by
$$
\begin{aligned}
F_g^{K_S,+}({\bf Q}) &= \sum_{\beta,\, \beta \cdot E>0} N_{g,\beta}^{K_S}\mathbf{Q}^\beta,\\
F_g^{S/E,+}({\bf Q}) &= \sum_{\beta,\,\beta \cdot E>0} \frac{(-1)^{\beta \cdot E+g-1}}{\beta \cdot E}N_{g,\beta}^{S/E}\mathbf{Q}^\beta ,
\end{aligned}
$$
respectively.
Here $\chi(S)$ is the Euler characteristic of $S$ 
and when $E\cdot E=0$, both $\frac{1-\delta_{(E\cdot E),0}}{E\cdot E}$ and
$\frac{1-\delta_{(E\cdot E),0}}{(E\cdot E)^2}$ are, by convention, zero.

The local-relative correspondence relates these two types of generating series using the stationary GW invariants of $E$. By adjunction, $K_E\cong(K_S\otimes\cO_S(E))|_E\cong\cO_E$, so the connected curve $E$ has genus one.
For $g,n\geq0$ and $\ba=(a_1,\ldots,a_n)\in\Z_{\geq0}^n$, we consider the generating series of connected stationary GW invariants of the elliptic curve $E$ given by \vspace{-0.2cm}
\begin{equation}\label{eq:F_E}
F^E_{g,\ba}
:=-\frac{\delta_{g,1}\delta_{n,0}}{24}
\log(\mathbf q)
+\sum_{d\geq0}
\langle\omega\psi^{a_1},\ldots,\omega\psi^{a_n}\rangle^E_{g,n,d}\mathbf q^d.
\end{equation}
Here $\omega\in H^2(E,\Q)$ is the point class. 
See Appendix~\ref{app:elliptic-stationary} for more details.
Moreover, introduce the operator $D$ by
$$
D\mathbf Q^\beta=(\beta\cdot E)\, \mathbf Q^\beta.
$$

The local-relative correspondence theorem states that
\begin{theorem}[{\cite[Theorem 1.2]{BFGW2021}}] \label{thm_locrel_SE}
	For every $g \geq 0$, we have
\[
	F_g^{K_S} = (-1)^g F_g^{S/E}+
	\sum_{n\geq 0} \sum_{\substack{ g=h+g_1+\dots+g_n,\\ \ba=(a_1,\dots,a_n)\in \Z_{\geq 0}^n, \\ (a_j,g_j) \neq (0,0),\,\sum_{j=1}^n a_j=2h-2} } 
	\frac{(-1)^{h-1} \widetilde F_{h, \ba}^E}{|\Aut (\ba, \bg)|}
	\prod_{j=1}^n   (-1)^{g_j-1} D^{a_j+2} F_{g_j}^{S/E}\,.
\] 
Here 
$\widetilde F_{h, \ba}^E= F_{h, \ba}^E-\frac{\delta_{h,1}\delta_{{\bf a},\emptyset}}{24}(E\cdot E)\log(-1)$ and is evaluated at 
\begin{equation}\label{eq:elliptic-coordinate-normalized}
	\mathbf q
	=(-1)^{E\cdot E}{\bf Q}^{E}\exp\bigl(-D^2F_0^{K_S,+}(\mathbf Q)\bigr),
\end{equation}
and $\Aut(\ba,\bg)$ is the subgroup of $S_n$ consisting of permutations $\sigma$ such that $(a_{\sigma(i)},g_{\sigma(i)})=(a_i,g_i)$ for every $i$.
In sums weighted by $|\Aut(\ba,\bg)|^{-1}$, the paired data $(a_i,g_i)_{i=1}^n$ are taken up to simultaneous permutation.
\end{theorem}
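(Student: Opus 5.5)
Since this is the cited result \cite[Theorem 1.2]{BFGW2021}, we only outline how we would prove it; the plan follows the degeneration strategy. Write $X=\Tot(\cO_S(-E))$. We would degenerate $X$ by deformation to the normal cone of the divisor $\pi^{-1}(E)\cong\Tot(\cO_E(-E))$. The special fibre has two components:
\begin{itemize}
\item $X_0=\Tot(\cO_S(-E))$ over $(S,E)$, relative to its restriction over $E$;
\item the bubble $Y=\Tot\bigl(\cO_{P}(-E_\infty)\bigr)$, where $P=\mathbb P(N_{E}\oplus\cO_E)$ and $E_\infty$ is its section at infinity.
\end{itemize}
The two components are glued along a copy of the total space of $N_E^{\vee}$ over $E$.

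\emph{Step 1: degeneration formula.} Apply the degeneration formula to $N^{K_S}_{g,\beta}$. This gives a sum over splittings of $\beta$, of the genus, and of the contact orders along the gluing divisor. On the $X_0$ side, the fibre direction of $\cO_S(-E)$ has negative degree on curves meeting $E$. The standard argument (the obstruction bundle contributes $\lambda$-classes) identifies each connected component there with $(-1)^{g_j}\lambda_{g_j}\cap[\bM_{g_j}(S/E,\beta_j)]^\vir$. A dimension count then forces every component on the $S$ side to have exactly one contact point of maximal tangency $\beta_j\cdot E$. This produces the relative invariants $N^{S/E}_{g_j,\beta_j}$, together with the factor $(-1)^{\beta_j\cdot E-1}$ from the gluing and the multiplicity $\beta_j\cdot E$ from the degeneration formula.

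\emph{Step 2: the bubble contribution.} The remaining task is to compute the relative invariants of $Y$ with prescribed contact orders $m_1,\dots,m_n$ along the gluing divisor. We would use $\C^*$-localization along the fibres of $P\to E$. After localization, the bubble contribution reduces to rubber and stationary-type integrals over $\bM_{h,n}(E,d)$. In these integrals the $j$-th contact point carries a descendant insertion $\omega\psi^{a_j}$. The contact order enters through the factor $m_j^{a_j+2}$, which on the generating-function side becomes $D^{a_j+2}$. The fibre class of $P$ gives a degree variable along $E$. Re-expressing it in terms of $\mathbf Q$ requires the mirror-type change of variables \eqref{eq:elliptic-coordinate-normalized}. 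We would justify that substitution by first running the genus-zero case, which yields $F_0^{K_S}=F_0^{S/E}$ and the shift by $D^2F_0^{K_S,+}$.

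\emph{Step 3: assembly.} Sum over all graphs and collect the automorphism factors $|\Aut(\ba,\bg)|$ and the signs $(-1)^{h-1}$, $(-1)^{g_j-1}$. Then add the classical and unstable terms of \eqref{eq:F_K_S} and \eqref{eq:F_SE}. These account for the $\log$ corrections, the $-\frac{1}{24}\log\mathbf q$ term in $F^E_{1,\emptyset}$, and the $\log(-1)$ shift in $\widetilde F^E$. Matching them gives the formula.

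The main obstacle we expect is Step 2. It requires showing that the higher-genus bubble invariants reduce exactly to the stationary invariants $F^E_{h,\ba}$ of the elliptic curve, with no extra Hodge-class corrections. The hardest part of this is controlling the rubber contributions and the vanishing of non-stationary insertions. We would first attempt it by a second localization together with the $\lambda_g$-vanishing on the rubber moduli, and then by the GW/Hurwitz correspondence for $E$.
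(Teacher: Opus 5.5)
The paper gives no proof of this statement; it is quoted from \cite{BFGW2021}. Your outline therefore has to stand on its own, and Step~1 fails as written.

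\textbf{The degeneration.} Deformation to the normal cone of $W=\pi^{-1}(E)\cong\Tot(N_E^\vee)$, with $N_E=\cO_E(E)$, has bubble $\mathbb P_W(q^*N_E\oplus\cO_W)=\Tot(p^*N_E^\vee)$ over $P$. This bubble is glued to $\Tot(\cO_S(-E))$ along $\Tot(N_E^\vee)$. It is not $\Tot(\cO_P(-E_\infty))$: since $E_0\cap E_\infty=\emptyset$, that space restricts to $E\times\C$ over $E_0$, so your two components cannot be glued as stated.

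\textbf{No $\lambda_g$ and the wrong contact orders.} With $\cO_S(-E)$ on the $S$-side, the obstruction bundle is $R^1\pi_*f^*\cO_S(-E)$. It has rank $g-1+\beta\cdot E$ and is not $\mathbb E^\vee$, so no $(-1)^g\lambda_g$ appears. Moreover, the relative theory of $(\Tot(\cO_S(-E)),\Tot(N_E^\vee))$ with $k$ contact points has virtual dimension $k-\beta_j\cdot E$ for a component of class $\beta_j$. Your dimension count therefore forces all contact orders to be $1$, the opposite of what you claim.

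\textbf{The degeneration that works.} You must twist by the exceptional component. Take $L=\cO(-\widetilde{E\times\mathbb A^1})$ on $\mathrm{Bl}_{E\times 0}(S\times\mathbb A^1)$. It restricts to $\cO_S$ on $S$ and to $\cO_P(-E_\infty)$ on $P$, glued along $E\times\C$. Then $\lambda_g$ comes from $H^1(C,\cO_C)=\mathbb E^\vee$ for the \emph{trivial} bundle.

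\textbf{Maximal contact needs a vanishing argument.} Even in this setup, maximal contact is not a dimension count. The class $\lambda_g\cap[\bM_g(S/E,\beta,(m_1,\dots,m_k))]^{\vir}$ has dimension $k-1$, which $k-1$ point insertions along $E$ absorb, and such invariants do not vanish in general. They have to be killed by a vanishing that again uses $L|_S\cong\cO_S$. An $S$-side component attached to the bubble at $k\geq 2$ points yields a trivial rank-$(k-1)$ piece of $H^1(C,f^*L)$, namely the fibres of $L$ at the nodes modulo constants. Its Euler class is therefore zero.

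\textbf{Step 2.} This is the actual higher-genus content, and you leave it as the main obstacle. Nothing in the outline shows that the bubble contributions collapse to $(-1)^{h-1}F^E_{h,\ba}$ with exactly $\psi^{a_j}$ at the $j$-th point and no leftover Hodge or $c_1(N_E)$ terms. So the formula is not established.

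Two surrounding claims are also off.
\begin{itemize}
\item The fibre class of $P\to E$ is contracted to a point of $E$. The variable $\mathbf q$ records the degree of the induced map to $E$, i.e.\ the multiple of the section class. That class pushes forward to a multiple of $E$ in $H_2(S)$, which is where $\mathbf Q^E$ comes from.
\item The substitution \eqref{eq:elliptic-coordinate-normalized} is not a mirror map obtained from the genus-zero case, which only gives $F_0^{K_S}=F_0^{S/E}$. It comes from resumming the excluded attachments with $(a_j,g_j)=(0,0)$, i.e.\ genus-zero $S$-side components glued at a point carrying $\omega\psi^0$, via the divisor equation on $E$. This is the same resummation carried out in the proof of Proposition~\ref{prop:well-define-bdot}. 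The genus-zero equality is used only to write $D^2F_0^{K_S,+}$ in place of $D^2F_0^{S/E,+}$.
\end{itemize}
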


\subsection{Normalization of the GW potentials}
\label{subsec:normalization}
From this subsection onward, we assume $E^2>0$.
Define the sign-twisted all-genus potentials by
\begin{equation}\label{eqn:hat-potentials}
\begin{aligned}
\widehat F^{K_S}({\bf Q};\hbar)
&:=\sum_{g\geq0}\hbar^{2g-2}F_g^{K_S}\big((-1)^E{\bf Q}\big),\\
\widehat F^{S/E}({\bf Q};\hbar)
&:=\sum_{g\geq0}\hbar^{2g-1}F_g^{S/E}\big((-1)^E{\bf Q}\big).
\end{aligned}
\end{equation}
	Here the sign substitution acts only on the positive-degree Novikov series, by $\mathbf Q^\beta\mapsto(-1)^{E\cdot\beta}\mathbf Q^\beta$; the classical and genus-one unstable terms remain unchanged.

\begin{definition}
The normalized generating series $\cT_g$ for the local theory and $\cR_g$ for the relative theory are defined by the expansions
\beq\label{def:T_and_R}
\begin{aligned}
\widehat F^{K_S}({\bf Q};\mathrm i\hbar)
&=\sum_{g\geq0}\hbar^{2g-2}\cT_g({\bf Q}),\\
\widehat F^{S/E}({\bf Q};\hbar)
&=-\sum_{g\geq0}\hbar^{2g-1}\cR_g({\bf Q}).
\end{aligned}
\eeq
Furthermore, for $g,n\geq 0$, we define
\begin{equation}\label{def:Dn_T_and_R}
	\cT_{g,n}:=D^n\cT_g,
	\qquad
	\cR_{g,n}:=D^n\cR_g,
\end{equation}
Here the equality $\cT_{0,n}=\cR_{0,n}$, $n\geq 0$, follows from the genus-zero local-relative correspondence.
\end{definition}

By using these notations, {and by straightforward computations,} the local-relative correspondence~{\cite[Theorem 1.2]{BFGW2021}} can be rewritten as follows:
\begin{equation}\label{eqn:LR-correspondence}
	\cT_g=\cR_g+\sum_{n\geq 0}\sum_{\substack{g=h+g_1+\cdots+g_n,\\ \ba=(a_1,\ldots,a_n)\in\Z_{\geq 0}^n,\\ (a_i,g_i)\neq(0,0),\ \sum_{i=1}^n a_i=2h-2}}
	F^E_{h,\ba}\cdot\frac{\prod_{i=1}^n\cR_{g_i,a_i+2}}{|\Aut(\ba,\bg)|},
\end{equation}
For example,
\beq\label{eqn:LRC-0-and-1}
\cT_0=\cR_0,\qquad
\cT_1=\cR_1+F^{E}_{1,\emptyset}.
\eeq

\subsection{Theta-function form of the NS blowup equation}
\label{subsec:geometric-NS-equation}

We rewrite equation~\eqref{eq:blowup-local-relative} in theta form by using the normalized local and relative generating series $\mathcal T_g$ and $\mathcal R_g$ and their derivatives $\mathcal T_{g,n}$ and $\mathcal R_{g,n}$, defined in~\eqref{def:T_and_R} and \eqref{def:Dn_T_and_R}.

Introduce the normalized theta function:
\begin{equation}\label{eq:delPezzo-Theta-expansion}
	\Theta(z)=\Theta(z|\tau)
	=z\, e^{\sum_{k\geq1}\frac{B_{2k}}{2k(2k)!}E_{2k}(\tau)z^{2k}},
\end{equation}
where $B_{2k}$ are Bernoulli numbers and $E_{2k}$ are Eisenstein series normalized to have constant term $1$.
In this subsection, we identify $2\pi {\rm i}\tau=\cT_{0,2}$, so that $e^{2\pi {\rm i}\tau}=\mathbf q$.
We have the following Lemma:
\begin{lemma}
	The NS limit blowup equation~\eqref{eq:blowup-local-relative} is equivalent to the following theta form equation:
	\beq\label{eqn:blowup-Theta}
	\exp\left(\sum_{2g-2+n>0}\hbar^{2g-2+n}\frac{\cT_{g,n}}{n!}\partial_z^n\right)
	\Theta(z)\bigg|_{z=z_{*}}=0,
	\eeq
	where 
	\begin{equation}\label{eq:delPezzo-zNS}
		z_{*}=-\sum_{g\geq1}\hbar^{2g-1}\cR_{g,1}.
	\end{equation}
\end{lemma}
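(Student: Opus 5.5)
The proof is a direct computation. The plan is to expand the $N$-dependence of the exponent in~\eqref{eq:blowup-local-relative} as a Taylor series in $N+\tfrac12$, separate the finitely many unstable terms, and recognise the remaining $N$-sum as the odd Jacobi theta series. The starting observation is that the substitution $\mathbf Q\mapsto \cq^{(N+\frac12)E}\mathbf Q$ acts on a monomial by $\mathbf Q^\beta\mapsto e^{-\hbar(N+\frac12)(E\cdot\beta)}\mathbf Q^\beta$. It also acts on the classical logarithms by $\log\mathbf Q^E\mapsto \log\mathbf Q^E-\hbar(N+\tfrac12)(E\cdot E)$. Both are consistent with $D\log\mathbf Q^E=E\cdot E$, so the substitution is the shift operator $\exp(-(N+\tfrac12)\hbar D)$. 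Using~\eqref{def:T_and_R} and~\eqref{def:Dn_T_and_R}, I would therefore write
\[
\widehat F^{K_S}\big(\cq^{(N+\frac12)E}\mathbf Q;\mathrm i\hbar\big)=\sum_{g,n\geq0}\hbar^{2g-2}\frac{(-(N+\tfrac12)\hbar)^n}{n!}\,\cT_{g,n}.
\]
This identity holds exactly on the polynomial-in-$\log\mathbf Q^E$ parts and as formal series on the rest. Similarly, $-(N+\tfrac12)D\widehat F^{S/E}=(N+\tfrac12)\sum_{g\geq0}\hbar^{2g-1}\cR_{g,1}$.

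Next I would split off the unstable terms, those with $2g-2+n\leq0$, namely $(g,n)\in\{(0,0),(1,0),(0,1),(0,2)\}$.
\begin{itemize}
\item The terms $\hbar^{-2}\cT_0+\cT_1$ do not depend on $N$, so they factor out as a nonzero overall exponential and can be discarded.
\item The term $-(N+\tfrac12)\hbar^{-1}\cT_{0,1}$ cancels the genus-zero relative term $(N+\tfrac12)\hbar^{-1}\cR_{0,1}$, because $\cT_{0,1}=\cR_{0,1}$ by~\eqref{eqn:LRC-0-and-1}.
\item The term $\tfrac12(N+\tfrac12)^2\cT_{0,2}=\pi\mathrm i\tau\,(N+\tfrac12)^2$ produces the Gaussian weight $\mathbf q^{(N+\frac12)^2/2}$.
\item The remaining relative terms assemble into $-(N+\tfrac12)z_*$, with $z_*$ as in~\eqref{eq:delPezzo-zNS}.
\end{itemize}
For the stable terms I would use $(-(N+\tfrac12))^n e^{-(N+\frac12)w}=\partial_w^n e^{-(N+\frac12)w}$. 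This rewrites the left-hand side of~\eqref{eq:blowup-local-relative}, up to the nonzero factor above, as
\[
\exp\Big(\sum_{2g-2+n>0}\hbar^{2g-2+n}\frac{\cT_{g,n}}{n!}\partial_w^n\Big)\vartheta(w)\Big|_{w=z_*},\qquad
\vartheta(w)=\sum_{N\in\Z}(-1)^N\mathbf q^{\frac12(N+\frac12)^2}e^{-(N+\frac12)w}.
\]
One point must be checked here: the operator series and the evaluation at $z_*$ must make sense as formal series in $\hbar$. They do, because every stable term carries a positive power of $\hbar$ and $z_*\in\hbar\,\mathbb Q[[\mathbf Q]][[\hbar^2]]$.

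The final step is to identify $\vartheta(w)$ with a nonzero constant times $\Theta(w)$. By the Jacobi triple product, $\vartheta$ is the odd theta function $\vartheta_1$, written in the variable $w$ with $2\pi\mathrm i\tau=\cT_{0,2}$. It equals $\eta(\tau)^3$ times (up to a sign or power of $\mathrm i$) $w\prod_{m\geq1}\frac{(1-\mathbf q^me^{w})(1-\mathbf q^me^{-w})}{(1-\mathbf q^m)^2}\cdot\frac{\sinh(w/2)}{w/2}$. Taking the logarithm and expanding via $\log\frac{\sinh(w/2)}{w/2}=\sum_k\frac{B_{2k}}{2k(2k)!}w^{2k}$, together with the Lambert series of $E_{2k}$, should reproduce exactly the exponent in~\eqref{eq:delPezzo-Theta-expansion}, including the quasimodular $E_2$ term. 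The prefactor is independent of $w$ and nonzero, so it drops out, and the equation becomes~\eqref{eqn:blowup-Theta}. All steps are reversible, which gives the claimed equivalence.

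I expect the main obstacle to be the bookkeeping of normalizations. One must track the sign twist $(-1)^E$ in~\eqref{eqn:hat-potentials}, the substitution $\hbar\to\mathrm i\hbar$ in the local potential, the factor $(-1)^{E\cdot E}$ in~\eqref{eq:elliptic-coordinate-normalized}, and the classical cubic and unstable genus-one logarithmic terms. Together these must produce precisely the alternating sign $(-1)^N$, the weight $\mathbf q^{(N+\frac12)^2/2}$ with no stray $w$-linear or $w$-quadratic correction, and a variable normalized so that $\vartheta$ matches $\Theta$ with $\Theta'(0)=1$ and no residual Gaussian factor. To handle this, I would first verify the quadratic-in-$(N+\tfrac12)$ and linear terms separately, using $D^2$ of the classical cubic term and the definition of $\mathbf q$. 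Only then would I invoke the product formula.
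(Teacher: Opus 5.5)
Your proposal is correct and follows essentially the paper's argument: Taylor-expand the shift via $e^{-(N+\frac12)\hbar D}$, cancel the $\hbar^{-1}$ linear terms using $\cT_{0,1}=\cR_{0,1}$, divide out the $N$-independent factor $e^{\hbar^{-2}\cT_0+\cT_1}$, and identify the remaining $N$-sum with a constant multiple of $\eta^3\Theta$. The only differences are cosmetic: the paper reindexes $N\to-N-1$ and quotes the identification of $\Theta$ with $\eta^{-3}$ times the theta function of characteristic $(\tfrac12,\tfrac12)$, whereas you keep $e^{-(N+\frac12)w}$ and check that identity via the triple product; moreover, the normalization issues you flag at the end never arise, because the sign twist, the substitution $\hbar\mapsto\mathrm i\hbar$, and the classical and unstable terms are all absorbed into the definitions of $\cT_g$ and $\cR_g$, and $2\pi\mathrm i\tau=\cT_{0,2}$ is imposed by definition.
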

\begin{proof}
By definition, we have
$$
D \widehat F^{S/E}({\bf Q};\hbar)=-\sum_{g\geq 0}\hbar^{2g-1}\cR_{g,1},
$$
and \vspace{-0.2cm}
$$
\widehat F^{K_S}\big(\cq^{-(N+\frac12)E}{\bf Q};\mathrm i\hbar\big)
=\sum_{g,n\geq 0}\hbar^{2g-2+n}\frac{(N+\frac12)^n}{n!}\cT_{g,n}.
$$
Therefore, after the change of summation index $N\to -N-1$, equation~\eqref{eq:blowup-local-relative} can be rewritten as follows:
$$
\sum_{N\in\Z}(-1)^{N}\exp\Big(-(N+\tfrac12)\sum_{g\geq 0}\hbar^{2g-1}\cR_{g,1}
+\sum_{g,n\geq 0}\hbar^{2g-2+n}\frac{(N+\frac12)^n}{n!}\cT_{g,n}\Big)=0.
$$
Notice that $\cR_{0,1}=\cT_{0,1}$. Dividing by $-\sqrt{-1} \, e^{\frac{1}{\hbar^{2}}\cT_{0}+\cT_1}$, the equation can be rewritten as
\beq\label{eqn:blowup-NP}
\exp\bigg(\sum_{2g-2+n>0}\hbar^{2g-2+n}\frac{\cT_{g,n}}{n!}\partial_z^n\bigg)
\theta\Big[\substack{\frac12\\ \frac12}\Big]
\Big(\frac{z}{2\pi\mathrm{i}}\Big|\frac{\cT_{0,2}}{2\pi \mathrm{i}}\Big)\bigg|_{z=z_{*}}=0,
\eeq
where $z_{*}$ is given by \eqref{eq:delPezzo-zNS} and $\theta[\substack{\mu\\ \nu}](w|\tau)$ is the theta-function defined by
\[
\theta[\substack{\mu\\ \nu}](w|\tau)
=\sum_{N\in\Z}\exp\big(\pi\mathrm{i}(N+\mu)^2\tau+2\pi\mathrm{i}(N+\mu)(w+\nu)\big).
\]
The lemma follows from the relation
$\Theta(z)=-\mathrm{i}\,\eta(\tau)^{-3}
	\theta{\Big[\substack{{\frac12}\\ {\frac12}}\Big]}
	\big(\frac{z}{2\pi\mathrm{i}}|\tau \big)$,
where $\eta(\tau)$ is the Dedekind eta-function.
\end{proof}

\begin{remark}
	The left-hand side of~\eqref{eqn:blowup-NP} has the form of a closed non-perturbative partition function as in~\cite[Definition~4.2]{GJZ23}.
	Therefore, the NS limit blowup equation is equivalent to the vanishing of the closed non-perturbative partition function with theta characteristic $(\frac12,\frac12)$.
\end{remark}

$$
$$

\section{Inverting the local-relative correspondence}
\label{sec:inverse_LRC}

We reformulate the local-relative correspondence as a formal action and construct its inverse using an associative set-partition convolution~$\star$ on the elliptic stationary generating series. This formulation allows us to identify the inverse correspondence with the solution of the blowup equation in the next section.

 \subsection{The local-relative correspondence as a formal action}
 	Let $\mathscr A\in \operatorname{Sym}(R)$ be a normalized family introduced in~\S\ref{subsec:intro-convolution} which satisfies the divisor equation \eqref{eq:kernel-divisor-identity}, 
 	and let ${\mathcal S}({\bf Q};\lambda)$ be a normalized series.  
	Recall  the formal action is defined by
 	\beq\label{eqn:A-bdot-S}
 	(\mathscr A \bdot \mathcal S)({\bf Q};\lambda):=\frac{\log({\bf q})}{\lambda(E\cdot E)}\log({\bf Q}^E)+\sum_{n\geq 1}\frac{1}{n!}\,  \mathscr A(\widehat{z_1},\cdots,\widehat{z_n})\,  \bigg(\prod_{i=1}^{n}\widetilde {\mathcal S}({\bf Q};\lambda)\bigg),
 	\eeq
{Here $\widetilde{\mathcal S}$ is the shifted series of~\S\ref{subsec:intro-formal-action}, and $\widehat z_i=\lambda D$ acts on its $i$-th factor.}

 \begin{proposition}\label{prop:well-define-bdot}
 	The formal action $\bdot$ is well defined. Moreover, the resulting series is normalized and independent of ${\bf q}$.
 \end{proposition}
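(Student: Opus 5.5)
The plan is to separate each factor $\widetilde{\mathcal S}$ into a part with no $D$, a part with exactly one $D$, and a part with two or more $D$'s. The first two parts carry the logarithms and the auxiliary parameter $\mathbf q$; the divisor equation~\eqref{eq:kernel-divisor-identity} resums them into a Taylor shift in $\mathbf q$. This one computation gives well-definedness, normalization and $\mathbf q$-independence together.

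\textbf{Step 1: the basic computation.} Write $\mathcal S=\sum_g\lambda^{2g-1}\mathcal S_g$ and $\mathcal S_0=\frac{(\log\mathbf Q^E)^2}{2(E\cdot E)}+\mathcal S_0^+$ with $\mathcal S_0^+\in\mathbb Q[[\mathbf Q]]$. Since $D\log\mathbf Q^E=E\cdot E$ and $D\mathbf q=0$, we get
\[
\lambda D\,\widetilde{\mathcal S}
=\log\mathbf Q^E-\log\mathbf q+D\mathcal S_0^+ +\sum_{g\geq1}\lambda^{2g}D\mathcal S_g .
\]
Set $t:=\log\mathbf Q^E-\log\mathbf q+D\mathcal S_0^+$, the $\lambda^0$ part. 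For $k\geq2$, $(\lambda D)^k\widetilde{\mathcal S}$ has no logarithms at all. It equals $\lambda^{k-1}D^k\mathcal S_0$, which lies in $\lambda^{k-1}\mathbb Q[[\mathbf Q]]$ because $D^2\log^2$ is constant, plus higher-genus corrections that also lie in $\mathbb Q[[\mathbf Q]]$ and carry strictly positive powers of $\lambda$.

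\textbf{Step 2: where logarithms can appear.} The normalization says $\mathscr A-\delta_{n,1}\in z_1\cdots z_nR[[z]]$. For $n=1$ the parity condition gives $\mathscr A(z)=1+O(z^2)$. Hence the only factor that ever appears undifferentiated is the $n=1$ identity term, which returns $\widetilde{\mathcal S}$; together with the explicit prefactor in~\eqref{eqn:A-bdot-S} it reproduces $\mathcal S$. Every other factor receives at least one $\widehat z_i$.

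\textbf{Step 3: resummation via the divisor equation.} Group the monomials $\prod z_i^{m_i}$ of $\mathscr A$ by the set $M$ of indices with $m_i=1$; each such factor contributes $t$ plus positive powers of $\lambda$. Fix the indices with $m_i\geq2$, and fix for each index in $M$ whether it contributes $t$ or its $\lambda$-correction. By symmetry of $\mathscr A$, summing over the number $m$ of factors contributing $t$ with weight $\frac{1}{n!}\binom{n}{m}$ gives $\frac{1}{m!}\,[w_1\cdots w_m]\mathscr A(z,w_1,\dots,w_m)\,t^m$. Iterating~\eqref{eq:kernel-divisor-identity}, this equals $\frac{t^m}{m!}(\mathbf q\partial_{\mathbf q})^m$ applied to the remaining coefficient. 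Summing over $m$ is formally the substitution $\mathbf q\mapsto\mathbf q e^{t}=\mathbf Q^E e^{D\mathcal S_0^+}$.

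\textbf{Step 4: well-definedness and $\mathbf q$-independence.} Coefficients lie in $R=\mathbb Q[E_2,E_4,E_6]\subset\mathbb Q[[\mathbf q]]$, and $\mathbf Q^Ee^{D\mathcal S_0^+}$ has positive $\mathbf Q$-adic valuation. So each such coefficient becomes a well-defined element of $\mathbb Q[[\mathbf Q]]$, independent of $\mathbf q$. The remaining factors involve only $D^k\mathcal S_0$ ($k\geq2$) and $D^k\mathcal S_g$ ($g\geq1$), which are $\mathbf q$-free and logarithm-free. The explicit $\log\mathbf q$ terms cancel between the prefactor and the identity term, so the result contains no $\mathbf q$.

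\textbf{Step 5: finiteness and normalization.} For fixed $\lambda$-power, after the resummation, only finitely many configurations contribute. Each factor with $k\geq2$ derivatives, or with a $\lambda$-correction, costs a positive power of $\lambda$, and $z$-degree is bounded by the $\lambda$-power once the $t$-factors are resummed. The parity $\mathscr A(-z)=(-1)^{n+1}\mathscr A(z)$ forces only odd powers of $\lambda$, consistent with the expansion $\sum_g\lambda^{2g-1}$. The $\lambda^{-1}$ part is exactly $\mathcal S_0$ from the identity term, so the genus-zero classical term is preserved, and all other parts lie in $\mathbb Q[[\mathbf Q]]$. Hence the result is normalized.

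\textbf{Main obstacle.} The delicate step is Step 3. One must make precise, as an identity in a suitable completion (formal in $\lambda$ and in $\mathbf Q$, with $t$ treated as a formal variable), that the symmetric-function combinatorics of choosing which arguments carry exactly one derivative produces exactly $\frac{1}{m!}$ times the iterated divisor coefficient. Only after that does the formal Taylor shift become a legitimate substitution.
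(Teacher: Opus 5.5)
Your proposal is correct and is essentially the paper's proof: your $t$ is the paper's $u=D\mathcal S_0-\log\mathbf q$, and the divisor-equation resummation $\sum_{k\geq0}A_{\mathbf a,0^k}(\mathbf q)\,t^k/k!=A_{\mathbf a}(\mathbf Q^E e^{D\mathcal S_0^+})=A_{\mathbf a}(e^{D\mathcal S_0})$ is exactly the paper's key step, from which well-definedness, normalization and $\mathbf q$-independence follow as you describe. One detail is missing. The terms in which every factor contributes $t$ leave no inputs for the divisor equation, so they must be shown to vanish separately. They do, because $A_{0,\ldots,0}=(\mathbf q\partial_{\mathbf q})^{m-1}A_{(0)}=0$ by your own observation $\mathscr A(z)=1+O(z^2)$; the paper states this vanishing directly from the normalization.
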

 \begin{proof}
 	By the normalization of $\mathscr A$, we have the following expansion:
 	\[
 	\mathscr A(z_1,\ldots,z_n)
 	=\delta_{n,1}+\sum_{a_1,\ldots,a_n\geq 0}
 	A_{a_1,\ldots,a_n}({\bf q})\prod_{i=1}^n z_i^{a_i+1},
 	\]
 	where $A_{0,\cdots,0}=0$.
 	Equation~\eqref{eqn:A-bdot-S} can be rewritten as follows
 	$$
 	(\mathscr A \bdot \mathcal S)({\bf Q};\lambda)=\mathcal S({\bf Q};\lambda)
 	+\sum_{n\geq 1}\frac{1}{n!}\sum_{a_1,\ldots,a_n\geq 0}
 	A_{a_1,\ldots,a_n}({\bf q}) \bigg(\prod_{i=1}^{n}\widetilde {\mathcal S}^{(a_i+1)}({\bf Q};\lambda)\bigg),
 	$$
 	where $\widetilde {\mathcal S}^{(a+1)}({\bf Q};\lambda)=(\lambda D)^{a+1}\widetilde {\mathcal S}({\bf Q};\lambda)$.
 	By noting that 
 	$$
 	\widetilde {\mathcal S}^{(a+1)}({\bf Q};\lambda)=\delta_{a,0} u+[\mathcal S^{(a+1)}({\bf Q};\lambda)]_{+},
 	$$
 	where $u=D{\mathcal S_0}-\log({\bf q})=\log {\bf Q}^{E}-\log{\bf  q} +O({\bf Q})$ and
 	$$
 	[\mathcal S^{(a+1)}({\bf Q};\lambda)]_{+}=\sum_{2g+a\geq 1}\lambda^{2g+a} D^{a+1} \mathcal S_{g}({\bf Q}),
 	$$
 	we have
 	\begin{align*}  
 		(\mathscr A \bdot \mathcal S)({\bf Q};\lambda)=\mathcal S({\bf Q};\lambda)
 		+ \sum_{\substack{m,k\geq 0\\m+k\geq 1}}\frac{1}{m!} \!\!\!\! \sum_{a_1,\ldots,a_m\geq 0} \!\!\!\!
 		 A_{a_1,\ldots,a_m,0,\cdots,0}({\bf q}) \cdot \frac{u^k}{k!}\cdot \bigg(\prod_{i=1}^{m} [{\mathcal S}^{(a_i+1)}({\bf Q};\lambda)]_{+}\bigg).
 	\end{align*}
 	By the divisor equation, ${\bf q}\frac{d}{d{\bf q}}A_{a_1,\ldots,a_n}({\bf q})=A_{a_1,\ldots,a_n,0}({\bf q})$, and thus
 	$$
 	\sum_{k\geq 0}A_{a_1,\ldots,a_m,0,\cdots,0}({\bf q}) \cdot \frac{u^k}{k!}=A_{a_1,\ldots,a_m}(e^{D\mathcal S_0}).
 	$$
 	This is a power series in ${\mathbf Q}$, since $e^{D\mathcal S_0}$ has positive Novikov order by the normalization of $\mathcal S$.
 	We get
 	\beq\label{eqn:A-bdot-S-Q}
 	(\mathscr A \bdot \mathcal S)({\bf Q};\lambda)=\mathcal S({\bf Q};\lambda)
 	+\sum_{m\geq 1}\frac{1}{m!} \sum_{a_1,\ldots,a_m\geq 0} A_{a_1,\ldots,a_m}(e^{D\mathcal S_0}) 
 	\bigg(\prod_{i=1}^{m} [{\mathcal S}^{(a_i+1)}({\bf Q};\lambda)]_{+}\bigg).
 	\eeq
 	For each fixed power of \(\lambda\), only finitely many terms contribute to the sum in~\eqref{eqn:A-bdot-S-Q}, so the sum is a power series in $\lambda$. 
 	Furthermore, according to the normalization of $\mathscr A$ and $\mathcal S$, it is easy to see $(\mathscr A \bdot \mathcal S)({\bf Q};-\lambda)=-(\mathscr A \bdot \mathcal S)({\bf Q};\lambda)$ .
 \end{proof}

Recall from~\S\ref{subsec:intro-convolution} that, for $n\geq1$,
 \begin{equation}\label{eqn:F_series}
 	\mathscr F(z_1,\ldots,z_n):=\delta_{n,1}+(z_1+\cdots+z_n) \sum_{\substack{g\geq0,\ a_1,\ldots,a_n\geq0\\|\ba|=2g-2}} F^E_{g,\ba}({\bf q})\prod_{i=1}^nz_i^{a_i+1}.
 \end{equation}
 where ${\bf a}=(a_1,\cdots,a_n)$ and $|\ba|=\sum_i a_i$. 
 The function $\mathscr F(z_1,\ldots,z_n)$ is symmetric with respect to $z_i$, since the generating series $F^{E}_{g,\ba}$ is symmetric with respect to $a_i$, $i=1,\cdots,n$,
 and we see $\mathscr F$ is a normalized element in $\operatorname{Sym}(R)$, with $R=\mathbb Q[E_2,E_4,E_6]\subset \mathbb Q[[{\bf q}]]$.
 Furthermore, $\mathscr F$ satisfies the divisor equation according to the divisor equation for the GW theory of elliptic curve.
 
 We introduce 
 $$
 \cT({\bf Q};\lambda):=\sum_{g\geq0}\cT_{g,1}({\bf Q})\lambda^{2g-1},\qquad
 \cR({\bf Q};\lambda):=\sum_{g\geq0}\cR_{g,1}({\bf Q})\lambda^{2g-1}.
 $$
 
 \begin{proposition}\label{prop:LRC-1}
 	Assuming the genus-zero and genus-one relations \eqref{eqn:LRC-0-and-1},
 	the local-relative correspondence~\eqref{eqn:LR-correspondence} is equivalent to the following equation:
 	\begin{equation}\label{eqn:T_in_R}
 		\cT=\mathscr F\bdot \cR.
 	\end{equation}
 \end{proposition}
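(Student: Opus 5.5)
We expand the right-hand side of \eqref{eqn:T_in_R} with the explicit formula \eqref{eqn:A-bdot-S-Q} from the proof of Proposition~\ref{prop:well-define-bdot}, taking $\mathcal S=\cR$ and $\lambda$ as the genus variable. We then compare the result coefficient by coefficient in $\lambda$ with the derivative $D$ of \eqref{eqn:LR-correspondence}.

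First we check that $\cR$ is a normalized series in the sense of \S\ref{subsec:intro-formal-action}. By \eqref{eqn:LRC-0-and-1} we have $\cR_{0}=\cT_0$, so $\cR_{0,1}=D\cR_0$. The classical term $-\frac{1}{6(E\cdot E)^2}(\log\mathbf Q^E)^3$ gives, after the sign conventions in \eqref{def:T_and_R}, the leading term $\frac{(\log \mathbf Q^E)^2}{2(E\cdot E)}$, up to power series in $\mathbf Q$, using $D\log\mathbf Q^E=E\cdot E$. The higher-genus parts are power series apart from the unstable $\log$ terms in genus one, and these only contribute constants after applying $D$. Next, $e^{D\cR_{0,1}}=e^{D^2\cT_0}$ coincides with $\mathbf q$ of \eqref{eq:elliptic-coordinate-normalized} after the sign twist; this is the key identification. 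With it, \eqref{eqn:A-bdot-S-Q} reads
\[
(\mathscr F\bdot\cR)=\cR+\sum_{m\ge1}\frac1{m!}\sum_{a_1,\dots,a_m}A_{a_1,\dots,a_m}(\mathbf q)\prod_{i=1}^m[\cR^{(a_i+1)}]_+ .
\]
Here $A_{\ba}$ is the coefficient of $\prod z_i^{a_i+1}$ in $(z_1+\dots+z_m)\sum F^E_{h,\ba'}\prod z_i^{a'_i+1}$.

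The coefficient of $\prod z_i^{a_i+1}$ in $(z_1+\cdots+z_m)\prod z_i^{a'_i+1}$ picks out one index $j$ with $a_j=a'_j+1$. We interpret this as applying $D$ to the $j$-th factor. Since $\widehat z_i=\lambda D$ and $[\cR^{(a+1)}]_+=\sum\lambda^{2g+a}\cR_{g,a+2}$, we obtain
\[
\sum_j F^E_{h,\ba'}\,\cR_{g_j,a'_j+3}\prod_{i\ne j}\cR_{g_i,a'_i+2}
=D\Big(F^E_{h,\ba'}\prod_i \cR_{g_i,a'_i+2}\Big)-\Big(\mathbf q\partial_{\mathbf q}F^E_{h,\ba'}\Big)\,(D\log\mathbf q)\prod_i\cR_{g_i,a'_i+2}.
\]
The correction term is $F^E_{h,(\ba',0)}\cR_{0,2}\prod\cdots$, by the divisor equation and $D\log\mathbf q=D^2\cT_0=\cR_{0,2}$. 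This is exactly the $m+1$ term of the sum with one extra insertion $(a,g)=(0,0)$. Those insertions are excluded in \eqref{eqn:LR-correspondence}, but they appear in the formal action through the $u^k$ resummation. We therefore reorganize so that the $(0,0)$ insertions are absorbed into the evaluation at $\mathbf q=e^{D\cR_0}$. The symmetry factor $1/m!$ over ordered tuples then becomes $1/|\Aut(\ba,\bg)|$ after grouping equal pairs. Matching the power $\lambda^{2g-1}$ via $\sum_i(2g_i+a'_i+1)+\dots$ together with $|\ba'|=2h-2$ gives total genus $h+\sum g_i$. The resulting $\lambda^{2g-1}$ coefficient is $D$ applied to the right side of \eqref{eqn:LR-correspondence}.

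Hence $\cT=\mathscr F\bdot\cR$ is equivalent to $D$ of \eqref{eqn:LR-correspondence} for all $g$. The $D$-constants of integration are fixed by \eqref{eqn:LRC-0-and-1} in low genus and by $\mathbf Q\to0$ vanishing in higher genus, which gives the equivalence. The main obstacle is the bookkeeping in the previous paragraph. This includes the signs from $\mathrm i\hbar$ and $(-1)^E$, which the definitions \eqref{def:T_and_R} were designed to absorb. It also includes showing precisely that the $(0,0)$ insertions and the $\mathbf q$-dependence of $F^E$ combine, via the divisor equation, into a total derivative $D$. I would verify this first in genus two as a sanity check.
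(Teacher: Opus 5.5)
Your route is viable, and it is the mirror image of the paper's. The paper uses the divisor equation to Taylor-expand $F^E_{h,\ba}(e^{\cR_{0,2}})$ about an independent $\mathbf q=e^{2\pi\mathrm i\tau}$. This makes \eqref{eqn:LR-correspondence} acquire $(0,0)$ insertions $\widetilde\cR_{0,2}=\cR_{0,2}-2\pi\mathrm i\tau$, and $D$ then acts only on $\prod\widetilde\cR$. So $\lambda D=\widehat z_1+\cdots+\widehat z_n$ on the product, and one lands directly on the definition \eqref{eqn:A-bdot-S}. You instead start from the resummed form \eqref{eqn:A-bdot-S-Q}, where $\mathbf q=e^{D\cR_{0,1}}=e^{\cR_{0,2}}$ depends on $\mathbf Q$, so you must account for $D$ hitting $F^E(\mathbf q)$. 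That avoids the auxiliary $\tau$.

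However, the central matching step, which you flag as unfinished, is wrong as written. Since $\log\mathbf q=D^2\cT_0=\cR_{0,2}$ (not $D\log\mathbf q$, and $\mathbf q\neq e^{D\cR_0}$), the Leibniz correction is $\lambda\,F^E_{h,(\ba',0)}\,\cR_{0,3}\prod_i\cR_{g_i,a'_i+2}$, not a term with $\cR_{0,2}$. Moreover, the terms cancelling it are not undifferentiated $(0,0)$ insertions from the $u^k$ resummation. Those are already absent from \eqref{eqn:A-bdot-S-Q}, because $[\cR^{(1)}]_+$ omits $g=0$. The ``reorganization'' you propose is therefore unnecessary, and taken literally it tries to match terms of the wrong shape.

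The correct bookkeeping is as follows. Write $A_{\ba}=\sum_{j:\,a_j\geq1}F^E_{h,\ba-e_j}$ and put $a'_j=a_j-1$ in the marked slot $j$. The marked slot contributes $[\cR^{(a'_j+2)}]_+$, which contains every $(g_j,a'_j)$, including $\lambda\cR_{0,3}$ for $(g_j,a'_j)=(0,0)$. The unmarked slots have $(g_i,a'_i)\neq(0,0)$.

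Terms whose marked slot is not $(0,0)$ give the part of $D$ of the right-hand side of \eqref{eqn:LR-correspondence} in which $D$ hits the $\cR$-factors; here $1/m!$ over ordered tuples becomes $1/|\Aut(\ba,\bg)|$. Terms whose marked slot is $(0,0)$ come with $m$ choices of $j$, so $m/m!=1/(m-1)!$. They sum to $\sum F^E_{h,(\ba'',0)}\cR_{0,3}\prod_i\cR_{g_i,a''_i+2}/|\Aut|$ over $(m-1)$-tuples. By the divisor equation this is exactly $D$ hitting $F^E_{h,\ba''}(e^{\cR_{0,2}})$; for $m=1$ it is $DF^E_{1,\emptyset}=F^E_{1,(0)}\cR_{0,3}$. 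The $\lambda$-powers add up to $\lambda^{2(h+\sum g_i)-1}$, as needed. This completes the forward direction.

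Your converse is the paper's, with one precision. For weak del Pezzo surfaces, $\ker D$ contains $\mathbf Q^\beta$ with $E\cdot\beta=0$. So vanishing at $\mathbf Q=0$ is not enough by itself. What you use is that both sides are a constant plus a series supported on $E\cdot\beta>0$, where $D$ is injective.
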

 
 \begin{proof}
 	In equation~\eqref{eqn:LR-correspondence}, we have used $F^E_{h,\ba}=F^E_{h,\ba}(e^{\mathcal R_{0,2}})$.
 	Now, as in the proof of Proposition~\ref{prop:well-define-bdot}, by using divisor equation, equation~\eqref{eqn:LR-correspondence} can be rewritten as follows:
 	$$
 	\cT_g=\cR_g+\sum_{n\geq 0}\sum_{\substack{g_1+\cdots+g_n+h=g\\ a_1+\cdots+a_n=2h-2}}
 	F^E_{h,\ba}(e^{2\pi i \tau})\cdot\frac{\prod_{i=1}^n\widetilde\cR_{g_i,a_i+2}}{|\Aut(\ba,\bg)|},
 	$$
 	where $\widetilde\cR_{g,a+2}=\cR_{g_i,a_i+2}-\delta_{(g,a),(0,0)}2\pi i \tau$, and $\tau$ is independent of ${\bf Q}$.
 	Taking $D$ action, we get
 	$$
 	\cT_{g,1}=\cR_{g,1}+\sum_{n\geq 0}\sum_{\substack{g_1+\cdots+g_n+h=g\\ a_1+\cdots+a_n=2h-2}}
 	F^E_{h,\ba}(e^{2\pi i \tau})\cdot \frac{D\big(\prod_{i=1}^n\widetilde\cR_{g_i,a_i+2}\big)}{|\Aut(\ba,\bg)|}.
 	$$
 	Then by multiplying $\lambda^{2g-1}$ and taking summation over $g\geq 0$, we obtain
 	$$
 	\cT=\cR+\sum_{n\geq 1}\frac{1}{n!}(\widehat{z_1}+\cdots +\widehat{z_n})\!\!\!\sum_{\substack{g, a_1,\ldots,a_n\geq0\\|\ba|=2g-2}} F^E_{g,\ba}(e^{2\pi i \tau})\prod_{i=1}^n\widehat{z_i}^{a_i+1}\bigg({\prod_{i=1}^n\widetilde \cR}\bigg).
 	$$
 	This is exactly equation~\eqref{eqn:T_in_R}.
 	Here we have used
 	$$
 	\lambda D\bigg({\prod_{i=1}^n\widetilde\cR}\bigg)
 	=(\widehat{z_1}+\cdots +\widehat{z_n})\bigg({\prod_{i=1}^n\widetilde\cR}\bigg).
 	$$ 
 	
 	Conversely, taking the coefficient of $\lambda^{2g-1}$ in~\eqref{eqn:T_in_R} gives the $D$-derivative of~\eqref{eqn:LR-correspondence}. For $g\geq 2$, both sides of~\eqref{eqn:LR-correspondence} have vanishing Novikov constant terms, and $D$ is injective on the terms with $E\cdot\beta>0$. Together with the genus-zero and genus-one relations~\eqref{eqn:LRC-0-and-1}, this recovers the original correspondence.
 \end{proof}
 
\subsection{Convolution multiplication}
Our goal is to find a sequence of generating series $\mathscr G(z_1,\cdots,z_n)$
such that we have the inverse formula of the local-relative correspondence:
\begin{equation}\label{eqn:R_in_T}
	\cR=\mathscr G\bdot \cT.
\end{equation}

For $\mathscr A,\mathscr B\in\operatorname{Sym}(R)$, recall the convolution $\mathscr A\star \mathscr B$ is defined by
	\beq\label{eqn:set-partition-convolution}
	(\mathscr A\star \mathscr B)(z_1,\ldots,z_n)
	:=\sum_{\pi=\{I_1,\ldots,I_k\}\in\mathcal P_n}
	\mathscr A(|z_{I_1}|,\ldots,|z_{I_k}|)\prod_{i=1}^{k}\mathscr B(z_{I_i}),
	\eeq
where $\mathcal P_n$ is the set of unordered set partitions of $\{1,\ldots,n\}$.

\begin{proposition}\label{prop:convolution-basic-properties}
	The convolution $\star$ is associative and has the two-sided identity $\mathscr I$:
	$$
	\mathscr I(z_1,\cdots,z_n)=\delta_{n,1}.
	$$
	Moreover, $\mathscr A$ is invertible under $\star$ if and only if $\mathscr A(z)$ is invertible in $R[[z]]$.
\end{proposition}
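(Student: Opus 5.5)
Well-definedness comes first. For fixed $n$, the sum over $\mathcal P_n$ is finite. Each term $\mathscr A(|z_{I_1}|,\ldots,|z_{I_k}|)\prod_i\mathscr B(z_{I_i})$ is a power series in $z_1,\ldots,z_n$, because substituting linear forms without constant term into a power series is legitimate. A permutation $\sigma\in S_n$ permutes the set partitions. It also permutes the blocks within a partition, and the symmetry of $\mathscr A$ absorbs this. Together with the symmetry of $\mathscr B$, this shows $\mathscr A\star\mathscr B$ is symmetric, so $\star$ is a well-defined operation on $\operatorname{Sym}(R)$.

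\textbf{Identity.} For $\mathscr I\star\mathscr B$, only the one-block partition survives, since $\mathscr I$ vanishes unless it has a single argument; this gives $\mathscr B(z_1,\ldots,z_n)$. For $\mathscr A\star\mathscr I$, only the partition into singletons survives, giving $\mathscr A(z_1,\ldots,z_n)$.

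\textbf{Associativity.} Expand $((\mathscr A\star\mathscr B)\star\mathscr C)(z)$ as a sum over a partition $\pi$ of $[n]$ into blocks $I_1,\ldots,I_k$. The factor $(\mathscr A\star\mathscr B)(|z_{I_1}|,\ldots,|z_{I_k}|)$ is itself a sum over partitions $\rho$ of the block set $\{1,\ldots,k\}$. Each block $P\in\rho$ contributes $\mathscr B$ evaluated at $(|z_{I_j}|)_{j\in P}$, and $\mathscr A$ is evaluated at the sums $\sum_{j\in P}|z_{I_j}|=|z_{\bigcup_{j\in P}I_j}|$. The data $(\pi,\rho)$ is the same as a chain of partitions $\pi\le\sigma$ of $[n]$, where $\sigma$ is the coarsening of $\pi$ determined by $\rho$. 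The resulting term is
\[
\mathscr A\bigl((|z_J|)_{J\in\sigma}\bigr)\prod_{J\in\sigma}\mathscr B\bigl((|z_I|)_{I\in\pi,\,I\subset J}\bigr)\prod_{I\in\pi}\mathscr C(z_I).
\]
On the other side, $(\mathscr A\star(\mathscr B\star\mathscr C))(z)$ is a sum over $\sigma$, and each factor $(\mathscr B\star\mathscr C)(z_J)$ is a sum over partitions of $J$. A choice of such partitions for every $J\in\sigma$ is the same as a refinement $\pi\le\sigma$, and the term obtained is identical. Both sides are therefore the same sum over chains $\pi\le\sigma$.

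\textbf{Invertibility.} For $n=1$ we have $(\mathscr A\star\mathscr B)(z)=\mathscr A(z)\mathscr B(z)$, so invertibility of $\mathscr A(z)$ in $R[[z]]$ is necessary for a left or right inverse.

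Conversely, suppose $\mathscr A(z)$ is invertible. I will construct a right inverse $\mathscr B$ recursively in $n$. Isolate the one-block term $\mathscr A(|z|)\mathscr B(z_1,\ldots,z_n)$ of $(\mathscr A\star\mathscr B)(z_1,\ldots,z_n)$. Every other partition has blocks of size $<n$, so involves only previously determined $\mathscr B$'s. The equation $\mathscr A\star\mathscr B=\mathscr I$ then reads
\[
\mathscr B(z_1,\ldots,z_n)=\mathscr A(z_1+\cdots+z_n)^{-1}\Bigl(\delta_{n,1}-\sum_{k\geq2}\cdots\Bigr).
\]
This is a power series, and it is symmetric because the right-hand side is.

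For a left inverse $\mathscr C$ with $\mathscr C\star\mathscr A=\mathscr I$, isolate instead the singleton partition term $\mathscr C(z_1,\ldots,z_n)\prod_i\mathscr A(z_i)$. The product $\prod_i\mathscr A(z_i)$ is invertible in $R[[z_1,\ldots,z_n]]$, and every other term involves $\mathscr C$ with fewer than $n$ arguments, so the same recursion works. Associativity then gives $\mathscr C=\mathscr C\star(\mathscr A\star\mathscr B)=(\mathscr C\star\mathscr A)\star\mathscr B=\mathscr B$, which is a two-sided inverse, and it is unique by the same argument.

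The main obstacle is the bookkeeping in the associativity bijection. The key point there is that the arguments of $\mathscr A$ agree on both sides because block sums add.
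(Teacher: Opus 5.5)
Your proof is correct and follows the paper's argument essentially step for step. Your chains of partitions $\pi\le\sigma$ are the paper's two-level partitions, and the invertibility argument uses the same two recursions: the one-block term for the right inverse and the singleton term for the left inverse, identified by associativity. You also check that $\mathscr A\star\mathscr B$ and the recursively built inverses are symmetric, which the paper leaves implicit.
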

\begin{proof}
Both $(\mathscr A\star\mathscr B)\star\mathscr C$ and $\mathscr A\star(\mathscr B\star\mathscr C)$ are indexed by the same two-level partitions of $[n]$. In either expansion, $\mathscr C$ is evaluated on the finest blocks, $\mathscr B$ is evaluated on groups of those blocks, and the resulting block sums are the arguments of $\mathscr A$. The two parenthesizations choose the fine and coarse partitions in the opposite order. Reindexing by this common two-level partition identifies the two sums and proves associativity.

For $\mathscr I\star\mathscr A$, only the partition with one block contributes. For $\mathscr A\star\mathscr I$, only the partition into singletons contributes. Thus $\mathscr I\star\mathscr A=\mathscr A\star\mathscr I=\mathscr A$.

If $\mathscr A$ has an inverse $\mathscr B$, the one-input equation gives $\mathscr B(z)\mathscr A(z)=1$, so $\mathscr A(z)$ is invertible. Conversely, suppose that $\mathscr A(z)$ is invertible. We construct a left inverse from $\mathscr B\star\mathscr A=\mathscr I$. Set $\mathscr B(z)=\mathscr A(z)^{-1}$. For $n\geq2$, isolate the partition $\pi_{\mathrm{disc}}=\{\{1\},\ldots,\{n\}\}$ to obtain
\[
\mathscr B(z_1,\ldots,z_n)
=-\frac{1}{\prod_{i=1}^n\mathscr A(z_i)}
\sum_{\substack{\pi=\{I_1,\ldots,I_k\}\in\mathcal P_n\\
\pi\ne\pi_{\mathrm{disc}}}}
\mathscr B(|z_{I_1}|,\ldots,|z_{I_k}|)
\prod_{i=1}^k\mathscr A(z_{I_i}).
\]
Every $\mathscr B$ on the right has fewer than $n$ inputs, so this determines $\mathscr B$ uniquely by recursion.

Similarly, we construct a right inverse $\mathscr C$ from $\mathscr A\star\mathscr C=\mathscr I$. Its one-input term is $\mathscr C(z)=\mathscr A(z)^{-1}$. For $n\geq2$, the partition with one block contributes
$\mathscr A(z_1+\cdots+z_n)\mathscr C(z_1,\ldots,z_n)$, while every $\mathscr C$ in the remaining terms has fewer than $n$ inputs. Since $\mathscr A(z_1+\cdots+z_n)$ is invertible, this determines $\mathscr C$ recursively. Associativity and the identity give
\[
\mathscr B=\mathscr B\star(\mathscr A\star\mathscr C)
=(\mathscr B\star\mathscr A)\star\mathscr C=\mathscr C.
\]
Hence $\mathscr B$ is a two-sided inverse of $\mathscr A$.
\end{proof}
\begin{remark}
	We can also allow nonempty partial sums of the variables as denominators. Block substitutions preserve this class, so the convolution remains well defined and associative, and the same inverse recursions apply.
\end{remark}
\begin{remark}
In~\cite{Lod10}, Loday gave a polynomial realization of the operad $\operatorname{ComDer}$.
The associativity and identity of $\star$ can also be deduced from this realization by passing to symmetric formal power series families and summing over set partitions.
See also~\cite{BG1975}  for a classical account of set partitions and Möbius inversion.
\end{remark}

\begin{proposition}\label{prop:stable-action-composition}
Let $\mathscr A,\mathscr B$ be normalized families satisfying the divisor equation~\eqref{eq:kernel-divisor-identity}.
Then $\mathscr A\star\mathscr B$ has the same properties, and for every normalized series $\mathcal S$,
\begin{equation}\label{eq:stable-action-composition}
(\mathscr A\star\mathscr B)\bdot \mathcal S
=\mathscr A\bdot(\mathscr B\bdot \mathcal S).
\end{equation}
Moreover, $\mathscr I\bdot\mathcal S=\mathcal S$.
\end{proposition}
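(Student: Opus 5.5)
We split the statement into three parts: closure of normalized families under $\star$, closure under the divisor equation, and the composition law for $\bdot$. The identity $\mathscr I\bdot\mathcal S=\mathcal S$ is immediate from the definition~\eqref{eqn:A-bdot-S}. Only the $n=1$ term survives and returns $\widetilde{\mathcal S}$, and adding back the shift $\frac{\log\mathbf q}{\lambda(E\cdot E)}\log\mathbf Q^E$ gives $\mathcal S$.

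\emph{Normalization of $\mathscr A\star\mathscr B$.} Write $\mathscr B(z_I)=\delta_{|I|,1}+O(z_I)$, divisible by $\prod_{i\in I}z_i$ when $|I|\ge2$. In~\eqref{eqn:set-partition-convolution}, the singleton partition contributes $\mathscr A(z_1,\dots,z_n)\prod_i\mathscr B(z_i)$.
\begin{itemize}
\item For $n\ge2$, this term is divisible by $z_1\cdots z_n$, since $\mathscr A$ is.
\item For $n=1$, the $\delta$-terms combine to $1+O(z_1)$.
\item Every other partition has some block $I$ with $|I|\ge2$, and the factors $\mathscr B(z_I)$ supply $\prod_{i\in I}z_i$. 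The singleton blocks $\{i\}$ supply $z_i$ through $\mathscr A$: either $\mathscr A$ has at least two inputs, so it is divisible by each of them, or $\mathscr B(z_i)-1$ does.
\end{itemize}
I will check this bookkeeping carefully, perhaps by writing $\mathscr A=\mathscr I+\mathscr A^+$ and $\mathscr B=\mathscr I+\mathscr B^+$ and expanding. Oddness is direct: under $z\mapsto -z$ the summand acquires the sign $(-1)^{k+1}\prod_i(-1)^{|I_i|+1}=(-1)^{n+1}$.

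\emph{Divisor equation.} Apply $\mathbf q\partial_{\mathbf q}$ to $(\mathscr A\star\mathscr B)(z_1,\dots,z_n)$ and compare with $[w](\mathscr A\star\mathscr B)(z_1,\dots,z_n,w)$. Partitions of $[n]\cup\{w\}$ fall into two types.
\begin{itemize}
\item If $w$ forms its own block, that block contributes $\mathscr A(\dots,w)\mathscr B(w)$. Taking $[w]$ and using $\mathscr B(w)=1+O(w^2)$, which holds by oddness and normalization, gives $[w]\mathscr A(\dots,w)\cdot 1=\mathbf q\partial_{\mathbf q}\mathscr A$.
\item If $w$ joins an existing block $I$, the factor $\mathscr A(\dots,|z_I|+w,\dots)$ contributes, at order $w$, a $\partial_{z}$-derivative of $\mathscr A$ times $\mathscr B(z_I)$. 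The factor $\mathscr B(z_I,w)$ contributes $[w]\mathscr B(z_I,w)=\mathbf q\partial_{\mathbf q}\mathscr B(z_I)$.
\end{itemize}
The second contribution gives the Leibniz term for $\mathscr B$, but the extra $\partial_z\mathscr A$ terms do not appear in $\mathbf q\partial_{\mathbf q}$. They must cancel, and I have not yet found the mechanism; this is the main obstacle. The first thing to try is that the order-$w$ part of $\mathscr B(z_I,w)$ also contains $w$-linear pieces from $z_I$-dependence that pair against these derivative terms. If that fails, I will recheck whether the $w$-block case actually carries hidden $\partial_z$ contributions, since normalization and oddness alone do not obviously produce a cancellation. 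Closure under inversion then follows by the recursion in Proposition~\ref{prop:convolution-basic-properties} and induction on $n$.

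\emph{Composition law.} Expand $\mathscr B\bdot\mathcal S=\frac{\log\mathbf q}{\lambda E^2}\log\mathbf Q^E+\sum_n\frac1{n!}\mathscr B(\widehat z)\prod\widetilde{\mathcal S}$. Its shifted series is the sum over $n$ alone, because the same auxiliary $\mathbf q$ is used throughout. By Proposition~\ref{prop:well-define-bdot}, the answer is independent of $\mathbf q$, so fixing a common $\mathbf q$ is legitimate.

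Substitute this into $\mathscr A\bdot(\cdot)$ with $k$ factors. The operator $\widehat z_j=\lambda D$ acting on the $j$-th factor, which is a product over a block $I_j$, equals $\sum_{i\in I_j}\widehat z_i$ by the Leibniz rule. Hence the result is
\[
\sum_k\frac1{k!}\sum_{n_1,\dots,n_k}\prod_j\frac1{n_j!}\,
\mathscr A\bigl(|\widehat z_{I_1}|,\dots\bigr)\prod_j\mathscr B(\widehat z_{I_j})\prod_{i=1}^n\widetilde{\mathcal S}.
\]
The standard exponential-formula count, $\frac{n!}{k!\prod n_j!}$ ordered-block choices per unordered set partition, turns this into $\sum_n\frac1{n!}(\mathscr A\star\mathscr B)(\widehat z)\prod\widetilde{\mathcal S}$, which is the claim.
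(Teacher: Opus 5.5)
\textbf{Gap: the divisor equation for $\mathscr A\star\mathscr B$ is left unproved.} Your arguments for normalization, for $\mathscr I\bdot\mathcal S=\mathcal S$, and for the composition law are fine and essentially match the paper. The composition law uses a common auxiliary $\mathbf q$, the Leibniz rule for $\lambda D$ on products, and the count $\frac{n!}{k!\prod_j n_j!}$ converting ordered blocks into unordered set partitions.

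The obstacle you describe is not real. It comes from pairing the wrong factors. Adjoin $w$ to a nonempty block $I_1$ of a partition $\{I_1,\ldots,I_k\}$ of $[n]$. The summand is then
\[
\mathscr A\bigl(|z_{I_1}|+w,|z_{I_2}|,\ldots,|z_{I_k}|\bigr)\,\mathscr B(z_{I_1},w)\prod_{i\geq2}\mathscr B(z_{I_i}).
\]
So the $w$-linear part of the $\mathscr A$-factor multiplies $\mathscr B(z_{I_1},w)|_{w=0}$, not $\mathscr B(z_{I_1})$. Since $I_1\cup\{w\}$ has at least two elements, normalization makes $\mathscr B(z_{I_1},w)$ divisible by $w$, so $\mathscr B(z_{I_1},w)|_{w=0}=0$. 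The coefficient of $w$ is therefore just $\mathscr A(|z_{I_1}|,\ldots,|z_{I_k}|)\,\bigl([w]\mathscr B(z_{I_1},w)\bigr)\prod_{i\geq2}\mathscr B(z_{I_i})$. By the divisor equation for $\mathscr B$, this is the Leibniz term with $\mathbf q\partial_{\mathbf q}$ acting on the factor $\mathscr B(z_{I_1})$. No $\partial_z\mathscr A$ terms arise, so nothing needs to cancel. Adding your correct computation for the block $\{w\}$ and summing over partitions gives exactly $\mathbf q\partial_{\mathbf q}(\mathscr A\star\mathscr B)$, since the arguments $|z_I|$ do not depend on $\mathbf q$.

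\textbf{Why this matters.} The divisor equation for $\mathscr A\star\mathscr B$ is part of the statement. Via Proposition~\ref{prop:well-define-bdot}, it is also what makes $(\mathscr A\star\mathscr B)\bdot\mathcal S$ well defined and independent of $\mathbf q$, so your composition argument depends on it. The fallback strategies you list would not help, because there are no extra terms. Neither extra $w$-linear pieces of $\mathscr B(z_I,w)$ nor hidden $\partial_z$ contributions in the $\{w\}$-block case have anything to cancel. Finally, closure under $\star$-inversion is not needed here; it is the content of Proposition~\ref{prop:proper-inverse}.
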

\begin{proof}
For $n=1$, we have $(\mathscr A\star\mathscr B)(z)=\mathscr A(z)\mathscr B(z)=1+O(z^2)$.
For $n\geq2$, every variable in a singleton block supplies a factor through $\mathscr A$, while variables in larger blocks supply their factors through $\mathscr B$.
Thus each summand is divisible by $z_1\cdots z_n$.
For a partition with $k$ blocks, simultaneous negation of the variables gives the sign
$(-1)^{k+1}\prod_{i=1}^k(-1)^{|I_i|+1}=(-1)^{n+1}$.
Hence $\mathscr A\star\mathscr B$ is normalized.

For the divisor equation, extract $[w]$ from $(\mathscr A\star\mathscr B)(z_1,\ldots,z_n,w)$.
The singleton block $\{w\}$ contributes ${\bf q}\,\partial_{\bf q}$ on the $\mathscr A$-factor, since $\mathscr B(w)=1+O(w^2)$.
Insertion into a nonempty block contributes ${\bf q}\,\partial_{\bf q}$ on that block's $\mathscr B$-factor; the change in its argument in $\mathscr A$ contributes nothing because $w$ divides $\mathscr B(z_I,w)$.
Their sum is ${\bf q}\,\partial_{\bf q}(\mathscr A\star\mathscr B)$, proving the divisor equation.

Proposition~\ref{prop:well-define-bdot} ensures that the composite action is well defined.
We use the same independent parameter ${\bf q}$ in both actions.
Subtracting the common shift in~\eqref{eqn:A-bdot-S} reduces the composition to ordinary differential substitution on $\widetilde{\mathcal S}$.
Since $D({\bf q})=0$, the Leibniz rule groups the inputs into set partitions, with $\mathscr B$ evaluated on each block and $\mathscr A$ on the corresponding block sums.
This is precisely the convolution~\eqref{eqn:set-partition-convolution}. Adding back the common shift, we get~\eqref{eq:stable-action-composition}.
Finally, the identity $\mathscr I\bdot\mathcal S=\mathcal S$ follows directly from the definition.
\end{proof}

It is clear that $\mathscr F$ is invertible under $\star$ since $\mathscr F(z)$ is invertible in $R[[z]]$.
Thus $\mathscr G=\mathscr F^{\star-1}$ exists. 
We prove that $\mathscr G$ is normalized and satisfies the divisor identity, and thus the inverse formula~\eqref{eqn:R_in_T} follows:
$$
\mathscr G\bdot \cT=\mathscr G\bdot (\mathscr F\bdot \cR)
=(\mathscr G\star \mathscr F)\bdot \cR
=\cR.
$$

\begin{proposition}\label{prop:proper-inverse}
	Let \(\mathscr A\) be a normalized family satisfying the divisor equation. Then \(\mathscr A^{\star-1}\) is normalized and satisfies the same divisor equation.
\end{proposition}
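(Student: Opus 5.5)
We prove both properties of $\mathscr B:=\mathscr A^{\star-1}$ by strong induction on the number $n$ of inputs. The tool is the left-inverse recursion from the proof of Proposition~\ref{prop:convolution-basic-properties}:
\[
\prod_{i=1}^n\mathscr A(z_i)\,\mathscr B(z_1,\ldots,z_n)
=-\sum_{\pi\neq\pi_{\mathrm{disc}}}\mathscr B(|z_{I_1}|,\ldots,|z_{I_k}|)\prod_{i}\mathscr A(z_{I_i}),
\]
in which every $\mathscr B$ on the right has $k<n$ inputs. For $n=1$ we have $\mathscr B(z)=\mathscr A(z)^{-1}$. Since $\mathscr A(z)=1+zA_0(z)$ and $\mathscr A(z)$ is odd-twisted, i.e.\ $\mathscr A(-z)=\mathscr A(z)$, the series $\mathscr A(z)$ is even. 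Hence $\mathscr A(z)=1+O(z^2)$, and $\mathscr B(z)$ is again even with $\mathscr B(z)-1\in zR[[z]]$.

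\emph{Normalization for $n\ge2$.} Each $\pi\neq\pi_{\mathrm{disc}}$ has at least one block of size $\ge2$.
\begin{itemize}
\item[(i)] Divisibility. Each variable $z_j$ in a block $I$ with $|I|\ge2$ divides $\mathscr A(z_I)$, because $\mathscr A(z_I)\in z_{I}R[[z_I]]$. Each singleton block $\{j\}$ contributes the argument $z_j$ to $\mathscr B(\ldots)$. By induction (applied to $k\ge2$, since $k<n$ and the number of blocks is at least $2$ unless $\pi$ is the single block), $\mathscr B$ with $k\ge2$ inputs is divisible by the product of its arguments, so $z_j$ divides that factor. If $\pi$ is the one-block partition, $\mathscr B(|z|)$ is a unit and $\mathscr A(z_1,\ldots,z_n)$ supplies $z_1\cdots z_n$. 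Thus every term is divisible by $z_1\cdots z_n$, and $\prod\mathscr A(z_i)$ is a unit.
\item[(ii)] Parity. Negating all variables, a term with $k$ blocks acquires the sign $(-1)^{k+1}\prod_i(-1)^{|I_i|+1}=(-1)^{n+1}$, exactly as in the proof of Proposition~\ref{prop:stable-action-composition}. The prefactor $\prod\mathscr A(z_i)$ is even, so $\mathscr B$ has parity $(-1)^{n+1}$.
\end{itemize}

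\emph{Divisor equation.} Rather than inducting through the recursion directly, we apply ${\bf q}\partial_{\bf q}$ to the identity $\mathscr B\star\mathscr A=\mathscr I$ and compare it with $[w]$ of the same identity at $n+1$ inputs. As in the proof of Proposition~\ref{prop:stable-action-composition}, extracting $[w]$ from $(\mathscr B\star\mathscr A)(z_1,\ldots,z_n,w)$ gives two contributions. The partitions in which $w$ is a singleton give $([w]\mathscr B)(\ldots,w)$ on block sums times $\prod\mathscr A$, using $\mathscr A(w)=1+O(w^2)$. The partitions in which $w$ joins a block $I$ give $\mathscr B(\ldots)\cdot [w]\mathscr A(z_I,w)={\bf q}\partial_{\bf q}\mathscr A(z_I)$; shifting the argument $|z_I|$ by $w$ contributes nothing at order $w^1$, because $w$ divides $\mathscr A(z_I,w)$. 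Setting
\[
\Delta(z_1,\ldots,z_n):=[w]\mathscr B(z_1,\ldots,z_n,w)-{\bf q}\partial_{\bf q}\mathscr B(z_1,\ldots,z_n),
\]
and using $[w]\mathscr I=0={\bf q}\partial_{\bf q}\mathscr I$, the two computations combine to
\[
\sum_{\pi\in\mathcal P_n}\Delta(|z_{I_1}|,\ldots,|z_{I_k}|)\prod_i\mathscr A(z_{I_i})=0.
\]
Inducting on $n$, the term with $\pi=\pi_{\mathrm{disc}}$ is $\Delta(z_1,\ldots,z_n)\prod\mathscr A(z_i)$, and every other $\Delta$ has fewer inputs. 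These lower terms vanish by the inductive hypothesis, and the base case $n=1$ is the same identity with only the discrete partition. Since $\prod\mathscr A(z_i)$ is a unit, $\Delta=0$.

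\textbf{Main obstacle.} The delicate step is the bookkeeping of the $[w]$ extraction when $w$ sits in a $\mathscr B$-argument. Under the singleton case, the $w$-dependence enters $\mathscr B$ only as a new variable; under insertion into a block $I$, it enters only through $|z_I|+w$ and $\mathscr A(z_I,w)$. We must verify that the linear-in-$w$ term of $\mathscr B(\ldots,|z_I|+w,\ldots)$ is killed by the factor $w$ dividing $\mathscr A(z_I,w)$, which holds by normalization of $\mathscr A$. This mirrors exactly the argument in the proof of Proposition~\ref{prop:stable-action-composition}.
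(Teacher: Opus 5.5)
Your proposal is correct and follows the paper's proof essentially step for step: induction through the left-inverse recursion gives divisibility and parity, and subtracting $\mathbf q\partial_{\mathbf q}$ of $(\mathscr B\star\mathscr A)(z_{[n]})=\delta_{n,1}$ from $[w]$ of $(\mathscr B\star\mathscr A)(z_{[n]},w)=0$ yields $\sum_{\pi}\Delta(|z_{I_1}|,\ldots,|z_{I_k}|)\prod_i\mathscr A(z_{I_i})=0$. The only cosmetic difference is that you conclude $\Delta=0$ by isolating the discrete partition and inducting on $n$, whereas the paper invokes the $\star$-invertibility of $\mathscr A$; these amount to the same argument.
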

\begin{proof}
Let $\mathscr B=\mathscr A^{\star-1}$. We first prove that $\mathscr B$ is normalized by induction on $n$.
For $n=1$, we have $\mathscr B(z)=\mathscr A(z)^{-1}$, which is even with constant term $1$.
For $n\geq2$, the inverse relation gives
\[
\mathscr B(z_1,\ldots,z_n)
=-\frac{1}{\prod_{i=1}^n\mathscr A(z_i)}
\sum_{\substack{\{I_1,\ldots,I_k\}\in\mathcal P_n\\k<n}}
\mathscr B(|z_{I_1}|,\ldots,|z_{I_k}|)
\prod_{i=1}^k\mathscr A(z_{I_i}).
\]
For $k=1$, the summand is divisible by $z_1\cdots z_n$ through its factor $\mathscr A(z_1,\ldots,z_n)$.
For $k\geq2$, each variable in a non-singleton block supplies a factor through $\mathscr A$, while a singleton block supplies its variable through $\mathscr B$ by induction.
Thus $\mathscr B(z_1,\ldots,z_n)$ is divisible by $z_1\cdots z_n$.
Similarly, induction in the same recursion gives
\[
\mathscr B(-z_1,\ldots,-z_n)
=(-1)^{n+1}\mathscr B(z_1,\ldots,z_n).
\]
Hence $\mathscr B$ is normalized.

We now prove the divisor equation. Consider the coefficient of $w$ in
$(\mathscr B\star\mathscr A)(z_{[n]},w)=0$ for $n\geq1$.
Since $\mathscr A(z_I,w)$ is divisible by $w$ for nonempty $I$, while $\mathscr A(w)=1+O(w^2)$, we obtain
\begin{align*}
0={}&\sum_{\pi=\{I_1,\ldots,I_k\}\in\mathcal P_n}
[w]\mathscr B(|z_{I_1}|,\ldots,|z_{I_k}|,w)
\prod_{i=1}^k\mathscr A(z_{I_i})\\
&+\sum_{\pi=\{I_1,\ldots,I_k\}\in\mathcal P_n}
\mathscr B(|z_{I_1}|,\ldots,|z_{I_k}|)
\sum_{j=1}^k[w]\mathscr A(z_{I_j},w)
\prod_{\substack{1\leq i\leq k\\i\ne j}}\mathscr A(z_{I_i}).
\end{align*}
Now apply ${\bf q}\frac{d}{d{\bf q}}$ to
$(\mathscr B\star\mathscr A)(z_1,\ldots,z_n)=\delta_{n,1}$.
Subtracting the preceding identity and using the divisor equation for $\mathscr A$ gives
\begin{align*}
0={}&\sum_{\pi=\{I_1,\ldots,I_k\}\in\mathcal P_n}
\bigg({\bf q}\frac{d}{d{\bf q}}\mathscr B(|z_{I_1}|,\ldots,|z_{I_k}|)
-[w]\mathscr B(|z_{I_1}|,\ldots,|z_{I_k}|,w)\bigg)
\prod_{i=1}^k\mathscr A(z_{I_i}).
\end{align*}
Since $\mathscr A$ is invertible under $\star$, this implies
\[
{\bf q}\frac{d}{d{\bf q}}\mathscr B(z_1,\ldots,z_n)
=[w]\mathscr B(z_1,\ldots,z_n,w),
\]
which is the divisor equation for $\mathscr B$.
\end{proof}

\subsection{Inverse formula of the local-relative correspondence}
By the properties of the convolution product \(\star\) and its compatibility with the formal action \(\boldsymbol{\cdot}\), we obtain the inverse local-relative correspondence in the form \(\mathcal R=\mathscr G\boldsymbol{\cdot}\mathcal T\), where \(\mathscr G=\mathscr F^{\star-1}\). Equivalently, \(\mathscr G\) is uniquely determined by
\beq\label{eqn:G-F}
\sum_{\pi=\{I_1,\ldots,I_k\}\in\mathcal P_n}\mathscr G(|z_{I_1}|,\ldots,|z_{I_k}|)\prod_{i=1}^{k}\mathscr F(z_{I_i})
=\delta_{n,1}.
\eeq
We next show that \(\mathscr G\) admits an expansion analogous to that of \(\mathscr F\) and satisfies the following properties.

\begin{theorem}\label{thm:inverse-kernel-properties}
For every $n\geq1$, the series $\mathscr G(z_1,\ldots,z_n)$ admits the following expansion
\begin{equation}\label{eqn:G_series}
\mathscr G(z_1,\ldots,z_n)=\delta_{n,1}+(z_1+\cdots+z_n)
\sum_{\substack{g\geq0,\ a_1,\ldots,a_n\geq0\\|\ba|=2g-2}}
G^E_{g,\ba}\prod_{i=1}^nz_i^{a_i+1}.
\end{equation}
Furthermore, we define $G^E_{g,\ba}$ by~\eqref{eqn:G_series} with extension $G^E_{1,\emptyset}:=-F^E_{1,\emptyset}$.
We set $G^E_{g,\ba}=0$ for $|\ba|\ne2g-2$.
For $n=\ell(\ba)\geq1$, the coefficients $G^E_{g,\ba}$ are quasimodular forms of weight $|\ba|+2n$.
The coefficients satisfy the divisor equation
\begin{equation}\label{eq:divisor-G}
{\mathbf q}\frac{d}{d{\mathbf q}}G^E_{g,\ba}=G^E_{g,(\ba,0)},
\end{equation}
and
\begin{equation}\label{eq:G-0}
G^E_{g,\ba}|_{\mathbf q=0}=0,\qquad \ell(\ba)\geq2.
\end{equation}
\end{theorem}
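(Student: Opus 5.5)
The plan is to derive everything from the recursion \eqref{eqn:G-F} together with Proposition~\ref{prop:proper-inverse}, applied to $\mathscr A=\mathscr F$. We already know that $\mathscr F$ is normalized and satisfies the divisor equation. Hence $\mathscr G=\mathscr F^{\star-1}$ is normalized and satisfies \eqref{eq:kernel-divisor-identity}.

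\emph{Step 1: expansion.} Normalization gives $\mathscr G(z_{[n]})-\delta_{n,1}\in z_1\cdots z_n R[[z]]$, with parity $(-1)^{n+1}$. To get the stronger form \eqref{eqn:G_series}, I would show that $\mathscr G(z_{[n]})-\delta_{n,1}$ is divisible by $z_1+\cdots+z_n$, and that the remaining symmetric series only involves monomials with $|\ba|$ even. The parity requirement then forces $|\ba|+n+1+n$ to be odd, so $|\ba|$ is even; one writes $|\ba|=2g-2$ and defines $G^E_{g,\ba}$ accordingly.

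For divisibility by $|z|$ I would argue by induction via the left-inverse recursion in the proof of Proposition~\ref{prop:convolution-basic-properties}. I would restrict to the hyperplane $z_1+\cdots+z_n=0$ and show that the sum over non-discrete partitions vanishes there. A cleaner route is to pass to the right-inverse recursion $\mathscr F\star\mathscr G=\mathscr I$. There the unique top term is $\mathscr F(|z|)\mathscr G(z)$, and the other terms are $\mathscr F(|z_{I_1}|,\dots,|z_{I_k}|)\prod\mathscr G(z_{I_i})$ with $k\ge2$. Each such $\mathscr F$-factor is divisible by the sum of its arguments, $\sum_i|z_{I_i}|=|z|$, by the form \eqref{eqn:F_series}. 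So the term $\mathscr F(|z|)\mathscr G(z)$ is divisible by $|z|$, and since $\mathscr F(|z|)$ is a unit, $\mathscr G(z)$ is divisible by $|z|$ as well. This is the step I regard as the key structural point; it works directly from the right-inverse equation, which is legitimate because the two inverses coincide.

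\emph{Step 2: genus range and $\mathbf q=0$.} For $n=1$, $\mathscr G(z)=\mathscr F(z)^{-1}=1+O(z^2)$, so only $g\ge 2$ appear (consistently, $g=1,\ba=(0)$ is excluded). For $n\ge2$, $g\ge1$ follows from $|\ba|\ge0$. The vanishing \eqref{eq:G-0} is proved by setting $\mathbf q=0$ in the right-inverse recursion. At $\mathbf q=0$ the degree-zero connected stationary invariants of $E$ with $n\ge2$ vanish, since constant maps with two or more point insertions give $\omega^2=0$. Hence $\mathscr F|_{\mathbf q=0}(z_{[n]})=0$ for $n\ge2$. The recursion then reduces to $\mathscr F(|z|)\mathscr G(z)=0$ at $\mathbf q=0$, i.e.\ $\mathscr G|_{\mathbf q=0}(z_{[n]})=0$ for $n\ge2$.

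\emph{Step 3: divisor equation and quasimodularity.} Extracting coefficients from $\mathbf q\partial_{\mathbf q}\mathscr G=[w]\mathscr G(z,w)$, with $[w]$ acting on the factor $(|z|+w)w\,(\cdots)$, gives exactly \eqref{eq:divisor-G}.

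For quasimodularity I would use that $F^E_{g,\ba}$, for $n\ge1$, is quasimodular of weight $|\ba|+2n$ (Okounkov--Pandharipande). Assign $z_i$ weight $-1$, so each term $\prod z_i^{a_i+1}F^E_{g,\ba}$, multiplied by the factor $|z|$, has total weight $-1+n$. The family $\mathscr F$ is then homogeneous of weight $n-1$ on $n$ inputs. In a convolution term with $k$ blocks, the weights add to $(k-1)+\sum(|I_i|-1)=n-1$, so homogeneity is preserved under the recursion. Since $R$ is closed under multiplication and the one-variable inverse $\mathscr F(z)^{-1}$ stays in $R[[z]]$, we get $G^E_{g,\ba}\in R$ of the stated weight.

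The main obstacle is making sure the weight bookkeeping is consistent with the unit $\delta_{n,1}$ and the $n=1$ inverse. The factor $\mathscr F(|z|)^{-1}$ is of weight $0$, so this should be fine, but it needs checking.
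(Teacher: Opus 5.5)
There is one genuine gap, and it is tied to a false remark in your Step~2. You claim that for $n=1$ only $g\geq2$ occurs and that $(g,\ba)=(1,(0))$ is excluded. This is a degree miscount. The $n=1$ part of \eqref{eqn:G_series} is $\sum_{g\geq1}G^E_{g,(2g-2)}z^{2g}$, so $\mathscr G(z)=1+O(z^2)$ permits $g=1$. In fact $G^E_{1,(0)}=[z^2]\,\mathscr F(z)^{-1}=-F^E_{1,(0)}=\tfrac{E_2}{24}\neq0$.

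This coefficient is exactly what enters the one case of \eqref{eq:divisor-G} that your Step~3 does not reach. The family divisor equation \eqref{eq:kernel-divisor-identity} is imposed only for $n\geq1$ inputs, so coefficient extraction gives \eqref{eq:divisor-G} only for $\ell(\ba)\geq1$. The statement also covers $\ba=\emptyset$, with the extended coefficient $G^E_{1,\emptyset}:=-F^E_{1,\emptyset}$; for $g\neq1$ both sides are $0$ by convention. The case $g=1$ must be checked separately:
\[
\mathbf q\frac{d}{d\mathbf q}G^E_{1,\emptyset}=-\mathbf q\frac{d}{d\mathbf q}F^E_{1,\emptyset}=-F^E_{1,(0)}=G^E_{1,(0)}.
\]
The middle equality is the divisor equation for $E$; the term $-\frac1{24}\log\mathbf q$ in \eqref{eq:F_E} matches $\langle\omega\rangle^E_{1,1,0}=-\frac1{24}$. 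The last equality comes from $\mathscr G(z)=\mathscr F(z)^{-1}=1-F^E_{1,(0)}z^2+O(z^4)$. If $(1,(0))$ really were excluded, the right-hand side would be $0$ while the left-hand side is $E_2/24$. So the slip is not cosmetic, although the repair is just this short computation.

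Everything else is correct, and your divisibility argument takes a genuinely different, slightly cleaner route than the paper's. The paper uses the left-inverse recursion coming from \eqref{eqn:G-F} and needs induction on $n$. For partitions with $k\geq2$ blocks, the factor $z_1+\cdots+z_n$ is supplied by $\mathscr G(|z_{I_1}|,\ldots,|z_{I_k}|)$ through the induction hypothesis. You use $\mathscr F\star\mathscr G=\mathscr I$ instead. There every non-top term carries $\mathscr F$ evaluated on $k\geq2$ block sums, so it is divisible by $|z|$ directly from \eqref{eqn:F_series}. This needs no induction and no information about $\mathscr G$ on fewer inputs; the only cost is invoking the equality of left and right inverses from Proposition~\ref{prop:convolution-basic-properties}. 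The same holds for your $\mathbf q=0$ argument, where the surviving term is the one-block term rather than the paper's discrete partition. Your weight bookkeeping (weight $n-1$ on $n$ inputs, with $\mathscr F(|z|)^{-1}$ of weight $0$) is the paper's argument made explicit, and it is correct.
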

\begin{proof}
By Proposition~\ref{prop:proper-inverse}, $\mathscr G=\mathscr F^{\star-1}$ is normalized and satisfies the divisor equation.
It remains to prove the additional factor $z_1+\cdots+z_n$ in~\eqref{eqn:G_series}.
For $n=1$, normalization gives $\mathscr G(z)-1\in z^2R[[z]]$.
For $n\geq2$, we use induction and the inverse recursion
\[
\mathscr G(z_1,\ldots,z_n)
=-\frac{1}{\prod_{i=1}^n\mathscr F(z_i)}
\sum_{\substack{\{I_1,\ldots,I_k\}\in\mathcal P_n\\k<n}}
\mathscr G(|z_{I_1}|,\ldots,|z_{I_k}|)
\prod_{i=1}^k\mathscr F(z_{I_i}).
\]
For $k=1$, the summand contains the factor $z_1+\cdots+z_n$ through $\mathscr F(z_1,\ldots,z_n)$.
For $k\geq2$, it contains the same factor through $\mathscr G(|z_{I_1}|,\ldots,|z_{I_k}|)$ by induction, since $|z_{I_1}|+\cdots+|z_{I_k}|=z_1+\cdots+z_n$.
Thus $\mathscr G(z_1,\ldots,z_n)$ is divisible by $z_1+\cdots+z_n$.
Together with the divisibility by $z_1\cdots z_n$ and the parity supplied by normalization, this shows that
\[
\frac{\mathscr G(z_1,\ldots,z_n)-\delta_{n,1}}
{(z_1+\cdots+z_n)z_1\cdots z_n}
\]
is a symmetric formal power series containing only monomials of even total degree.
Writing this degree as $2g-2$, we get~\eqref{eqn:G_series}.

For nonempty $\ba$, comparing coefficients in the divisor equation for $\mathscr G$ gives~\eqref{eq:divisor-G}.
For the empty tuple,
\[
\mathbf q\frac{d}{d\mathbf q}G^E_{1,\emptyset}
=-F^E_{1,(0)}=G^E_{1,(0)},
\]
where the last equality follows from $\mathscr G(z)=\mathscr F(z)^{-1}$.

Recall that $\mathscr F(z_1,\ldots,z_n)|_{\mathbf q=0}=0$ for $n\geq2$, since degree-zero stationary invariants with at least two point insertions vanish.
Evaluating~\eqref{eqn:G-F} at $\mathbf q=0$ therefore gives
\[
0=\left.\mathscr G(z_1,\ldots,z_n)\right|_{\mathbf q=0}
\prod_{i=1}^n\left.\mathscr F(z_i)\right|_{\mathbf q=0},
\qquad n\geq2.
\]
Since $\mathscr F(z)|_{\mathbf q=0}=1+O(z^2)$ is invertible, we obtain $\mathscr G(z_1,\ldots,z_n)|_{\mathbf q=0}=0$ for $n\geq2$.
Comparing coefficients in~\eqref{eqn:G_series} proves~\eqref{eq:G-0}.

Finally, in~\cite[Section~5]{OP2006}, Okounkov--Pandharipande showed that $F^E_{g,\ba}$ is a quasimodular form of weight $|\ba|+2n$, where $n=\ell(\ba)\geq1$.
Thus a coefficient of total $z$-degree $d$ in $\mathscr F(z_1,\ldots,z_n)$ has weight $d+n-1$.
This grading is preserved by convolution and the inverse recursion above, so the same holds for $\mathscr G$.
Comparing with~\eqref{eqn:G_series}, we see the weight is $|\ba|+2n$ for $G^E_{g,\ba}$.
\end{proof}

Now we write down the explicit inverse formula of the local-relative correspondence.
\begin{proposition}\label{prop:general-LR-inverse}
	For every $g\geq0$, the inverse local-relative correspondence is
	\begin{equation}\label{eqn:local-relative-inverse}
		\cR_g=\cT_g
		+\sum_{n\geq0}\sum_{\substack{g=h+g_1+\cdots+g_n\\\ba=(a_1,\ldots,a_n)\in\Z_{\geq0}^n,\\ (a_i,g_i)\ne(0,0),\, \sum_{i=1}^na_i=2h-2}}
		G^E_{h,\ba}\frac{\prod_{i=1}^n\cT_{g_i,a_i+2}}{|\Aut(\ba,\bg)|},
	\end{equation}
	where \(G^E_{h,\mathbf a}\) is evaluated at \(\mathbf q=e^{\mathcal T_{0,2}}\).
\end{proposition}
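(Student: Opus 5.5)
The plan is to run the proof of Proposition~\ref{prop:LRC-1} backwards, with $\mathscr G$ in place of $\mathscr F$ and the roles of $\cT$ and $\cR$ swapped. By Theorem~\ref{thm:inverse-kernel-properties}, $\mathscr G$ is normalized, satisfies the divisor equation, and has the expansion~\eqref{eqn:G_series}. By Propositions~\ref{prop:stable-action-composition} and~\ref{prop:LRC-1},
\[
\mathscr G\bdot\cT=\mathscr G\bdot(\mathscr F\bdot\cR)=(\mathscr G\star\mathscr F)\bdot\cR=\mathscr I\bdot\cR=\cR .
\]
So $\cR=\mathscr G\bdot\cT$ holds, and the remaining task is to extract~\eqref{eqn:local-relative-inverse} from it coefficient by coefficient.

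\textbf{Expanding the action.} Insert~\eqref{eqn:G_series} into~\eqref{eqn:A-bdot-S}, taking $\mathcal S=\cT$ and an auxiliary $\mathbf q=e^{2\pi\mathrm i\tau}$. The factor $(\widehat z_1+\cdots+\widehat z_n)$ acts as $\lambda D$ on the product $\prod_i\widetilde\cT$, since $D\mathbf q=0$. Hence
\[
\cR=\cT+\lambda D\sum_{n\ge1}\frac1{n!}\sum G^E_{h,\ba}(\mathbf q)\prod_i(\lambda D)^{a_i+1}\widetilde\cT .
\]
Here $(\lambda D)^{a+1}\widetilde\cT$ has $\lambda^{2g+a}$-coefficient $\cT_{g,a+2}$, except that the $(g,a)=(0,0)$ entry is shifted by $-2\pi\mathrm i\tau$.

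\textbf{Removing the $(0,0)$ factors.} Next I group the factors with $(g_i,a_i)=(0,0)$. By the divisor equation~\eqref{eq:divisor-G}, summing over $k$ such factors gives $\sum_k G^E_{h,(\ba,0^k)}(\mathbf q)\,(\cT_{0,2}-2\pi\mathrm i\tau)^k/k!$. This is the Taylor expansion of $G^E_{h,\ba}$ evaluated at $\mathbf q=e^{\cT_{0,2}}$, exactly as in the proof of Proposition~\ref{prop:well-define-bdot}. Two boundary contributions arise. When $\ba=\emptyset$, the shift term $\frac{\log\mathbf q}{\lambda E\cdot E}\log\mathbf Q^E$ together with the $n\ge1$ all-$(0,0)$ terms must reproduce $D$ of the constant $G^E_{1,\emptyset}(e^{\cT_{0,2}})$. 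This is where the convention $G^E_{1,\emptyset}=-F^E_{1,\emptyset}$ enters: its $\mathbf q\partial_{\mathbf q}$-derivative is $G^E_{1,(0)}$, so the $D$-image matches. After this step the multinomial $1/n!$ together with the choice of which factors carry which $(g_i,a_i)$ becomes $1/|\Aut(\ba,\bg)|$.

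\textbf{Extracting coefficients and integrating.} The coefficient of $\lambda^{2g-1}$ then gives $D$ applied to~\eqref{eqn:local-relative-inverse}. The exponent count is consistent: $1+\sum_i(2g_i+a_i)=2g-1$ holds because $|\ba|=2h-2$. To remove $D$, I argue as at the end of Proposition~\ref{prop:LRC-1}. For $g\ge2$, both sides have no Novikov constant term and $D$ is injective on classes with $E\cdot\beta>0$. For $g=0,1$, the statement is~\eqref{eqn:LRC-0-and-1}, rewritten with $G^E_{1,\emptyset}=-F^E_{1,\emptyset}$; the $n=0$, $h=1$ term supplies exactly this.

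\textbf{Main obstacle.} The delicate point is the bookkeeping of the log and unstable terms. I must check that the $\log\mathbf q$ shift and the genus-one empty term combine correctly, so that the resulting series is independent of $\mathbf q$ and the $D$-integration constant vanishes. My first step there will be to verify genus one by hand from $\mathscr G(z)=\mathscr F(z)^{-1}$.
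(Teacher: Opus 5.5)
Your route is the paper's: deduce $\cR=\mathscr G\bdot\cT$ from $\cT=\mathscr F\bdot\cR$ by associativity, then rerun Proposition~\ref{prop:LRC-1} with $\mathscr G$ in place of $\mathscr F$, resumming the $(0,0)$ factors by \eqref{eq:divisor-G}. You take $g\le 1$ from \eqref{eqn:LRC-0-and-1} and remove $D$ for $g\ge 2$. The resummation, the exponent count and the low-genus cases are fine.

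The gap is the step you flag as delicate yourself, namely the $D$-integration constant for $g\geq2$. You assert, by analogy with Proposition~\ref{prop:LRC-1}, that the right-hand side of \eqref{eqn:local-relative-inverse} has no Novikov constant term, but this is not automatic. The classical cubic term of $\cT_0$ gives $\cT_{0,3}=D^3\cT_0=E\cdot E+O(\mathbf Q)$. So the terms with all $(g_i,a_i)=(0,1)$, i.e.\ $n=2g-2\geq 2$, contribute
\[
G^E_{g,(1,\ldots,1)}(0)\,\frac{(E\cdot E)^{2g-2}}{(2g-2)!}
\]
to the constant term. For $g=2$ this is the term $\tfrac12G^E_{2,(1,1)}\cT_{0,3}^2$, with $G^E_{2,(1,1)}=(5E_2^3-9E_2E_4+4E_6)/51840$. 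These contributions vanish precisely by the vanishing property \eqref{eq:G-0} of Theorem~\ref{thm:inverse-kernel-properties}. The paper cites this property explicitly alongside the divisor equation. In Proposition~\ref{prop:LRC-1} the same role is played by the vanishing of degree-zero stationary invariants with at least two insertions. A genus-one hand check does not touch this constant, so you need to invoke \eqref{eq:G-0} here.

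A smaller inaccuracy concerns the $\log\mathbf q$ shift in \eqref{eqn:A-bdot-S}. It is used up entirely in combining with the $\delta_{n,1}$ term to give $\cT$, which is how you obtained $\cR=\cT+\cdots$; it does not interact with the all-$(0,0)$ terms. Those terms sum by themselves to $\lambda D\bigl[G^E_{1,\emptyset}(e^{\cT_{0,2}})-G^E_{1,\emptyset}(\mathbf q)\bigr]$. The subtracted term is killed because $D\mathbf q=0$, and this is where the $\mathbf q$-independence comes from.
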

\begin{proof}
For genus $0$ and $1$, the formula is $\cR_0=\cT_0$ and $\cR_1=\cT_1-F^E_{1,\emptyset}$, as in~\eqref{eqn:LRC-0-and-1}.
For $g\geq2$, we apply the same argument as in the proof of Proposition~\ref{prop:LRC-1}, with $F^E_{h,\ba}$ replaced by $G^E_{h,\ba}$ and $\cR,\cT$ interchanged, using the divisor equation and vanishing property in Theorem~\ref{thm:inverse-kernel-properties}.
Thus~\eqref{eqn:local-relative-inverse} is equivalent to $\cR=\mathscr G\bdot\cT$, the convolution inversion of $\cT=\mathscr F\bdot\cR$.
\end{proof}

$$
$$

\section{Equivalence between blowup equation and local-relative correspondence}
\label{sec:blowup equation}

In this section we prove Theorem~\ref{main_theorem}. We first solve the theta function equation \eqref{eqn:blowup-Theta} and then identify its solution with the inverse local-relative formula.

\subsection{Solving the NS limit blowup equation}
\label{subsec:solve-NS}
In this subsection, we express the solution $z_{*}=-\sum_{g\geq1}\hbar^{2g-1}\cR_{g,1}$ of equation~\eqref{eqn:blowup-Theta} in term of $\cT_{g,n}$.

\begin{proposition}\label{prop:solution-of-theta-blowup}
	Equation~\eqref{eqn:blowup-Theta} has a unique formal solution $z_{*}=-\sum_{g\geq1}\hbar^{2g-1}\cR_{g,1}$, which has the form
	\beq\label{eqn:sol-blowup}
	\cR_{g,1}=\sum_{n\geq 1}\sum_{\substack{h+ g_1+\cdots+g_n=g\\ \mu_j\geq0,\, 2g_j-1+\mu_j>0 \\ |\boldsymbol\mu|=2h-1+n}}
	V_{h,\boldsymbol\mu}\cdot\frac{\prod_{j=1}^n\cT_{g_j,\mu_j+1}}{|\Aut(\mathbf g,\boldsymbol\mu)|},
	\qquad g\geq 1.
	\eeq
	Moreover, the coefficients $V_{h,\boldsymbol\mu}$ are determined in the following way. Let
	\begin{equation}\label{eq:theta-solution-kernel}
		V(z_1,\ldots,z_n)
		=\sum_{\substack{h\geq0,\;\mu_i\geq0\\|\boldsymbol\mu|=2h-1+n}}
		V_{h,\boldsymbol\mu}z_1^{\mu_1+1}\cdots z_n^{\mu_n+1},
	\end{equation}
	then $V\in \operatorname{Sym}(R)$ satisfies and is uniquely determined by the following relation:
	\beq\label{eqn:C-Theta}
	\sum_{k\geq 0}(-1)^k\sum_{\substack{I_1\sqcup\cdots\sqcup I_k\sqcup J=[n]\\I_1,\ldots,I_k\neq\emptyset}}
	\bigg(\prod_{i=1}^kV(z_{I_i})\bigg)\Theta^{(k)}(|z_J|)=0.
	\eeq
	Here $\Theta^{(k)}$ denotes the $k$-th derivative of $\Theta$, and the sets $I_1,\ldots,I_k$ are unordered and counted once.
\end{proposition}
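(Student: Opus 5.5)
The plan is to solve \eqref{eqn:blowup-Theta} order by order in $\hbar$. The key device is to read the operator $\exp\big(\sum\hbar^{2g-2+n}\cT_{g,n}\partial_z^n/n!\big)$ as a formal ``Wick'' action on $\Theta$. The auxiliary variables $z_i$ then encode the derivatives $\cT_{g_i,\mu_i+1}$, so that the solution $z_*$ becomes a generating function in these variables.

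First I would set up the dictionary. Introduce formal variables $z_1,\ldots,z_n$ and assign to each monomial $z_1^{\mu_1+1}\cdots z_n^{\mu_n+1}$ the product $\prod_j\hbar^{2g_j-1+\mu_j}\cT_{g_j,\mu_j+1}$, summed over $g_j$ with $2g_j-1+\mu_j>0$. Also include the factor $1/|\Aut|$ coming from symmetrisation over the $n$ labelled inputs, as in the proof of Proposition~\ref{prop:LRC-1}.

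Under this dictionary the operator $\exp(\cdots)$ acting on $\Theta^{(k)}$ becomes the sum over sets $J$ of inputs whose total shift is $|z_J|$. Grouping by connected pieces gives exactly the exponential/set-partition structure. Substituting the ansatz $z_*=-\sum_{n}\frac1{n!}V(z_1,\ldots,z_n)$ under the dictionary and Taylor expanding
\[
\Theta^{(\cdot)}(z_*)=\sum_k\frac{z_*^k}{k!}\Theta^{(k)}
\]
produces, for each fixed labelled set $[n]$ of inputs, a sum over unordered nonempty blocks $I_1,\ldots,I_k$ carrying $V(z_{I_i})$ and a remainder $J$ feeding $\Theta^{(k)}(|z_J|)$. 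The sign $(-1)^k$ comes from the minus sign in $z_*$, and the $1/k!$ is absorbed by the blocks being unordered. Thus vanishing of \eqref{eqn:blowup-Theta} for all values of the $\cT_{g,n}$ (treated as free variables) is equivalent to \eqref{eqn:C-Theta} for every $n$.

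Uniqueness and existence then follow by recursion. In \eqref{eqn:C-Theta}, the only term containing $V(z_1,\ldots,z_n)$ with all $n$ inputs is $k=1$, $I_1=[n]$, $J=\emptyset$, namely $-V(z_{[n]})\Theta'(0)=-V(z_{[n]})$. All other terms involve $V$ with fewer inputs, so $V$ is determined recursively, starting from $V(z)=\Theta(z)$ at $n=1$. The result is symmetric, and its coefficients lie in $R$ since $\Theta$ has coefficients in $\Q[E_2,E_4,E_6]$.

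Finally I would match the index ranges with \eqref{eqn:sol-blowup}. The parity $\Theta(-z)=-\Theta(z)$, propagated through the recursion, shows that $V$ has the homogeneity needed for $|\boldsymbol\mu|=2h-1+n$. The $\hbar$-power bookkeeping $\hbar^{2h-1}\prod\hbar^{2g_j-1+\mu_j}=\hbar^{2g-1}$ reproduces $g=h+\sum g_j$.

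The main obstacle is rigor on the ``free variables'' step. The $\cT_{g,n}$ are specific series, so uniqueness should be phrased as: \eqref{eqn:blowup-Theta} determines $z_*$ uniquely order by order in $\hbar$. At order $\hbar^{2g-1}$, the coefficient of $\cR_{g,1}$ comes with $\Theta'(0)=1$, which gives uniqueness directly. Existence of the universal solution, and the fact that it equals the series \eqref{eqn:sol-blowup}, then follow by plugging in $V$. I expect the careful combinatorial check of the automorphism factors and of the exclusion of $(g_j,\mu_j)=(0,0)$ to be the fiddliest part. The term $\cT_{0,1}$ was already absorbed into the theta argument and $\cT_{0,2}$ into $\tau$, which is what forces $2g_j-1+\mu_j>0$.
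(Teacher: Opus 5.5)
Your overall route is the paper's. You expand \eqref{eqn:blowup-Theta} after dividing out the $\partial_z^0$ scalar, and get uniqueness because $\cR_{g,1}$ enters the $\hbar^{2g-1}$ coefficient only through $-\cR_{g,1}\Theta'(0)$. You then define $V$ recursively by isolating $-V(z_{[n]})\Theta'(0)$ in \eqref{eqn:C-Theta} and check existence by substitution; your multiplicative ``dictionary'' is a cleaner bookkeeping of the substitution the paper only sketches.

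The gap is the claim that $V$ has the expansion \eqref{eq:theta-solution-kernel}. Oddness only shows that the total degree $|\boldsymbol\mu|+n$ is odd, i.e.\ that $h$ is an integer. The expansion also asserts two things parity does not give:
\begin{enumerate}
\item every $z_i$ occurs with exponent $\mu_i+1\geq1$;
\item $h\geq0$, i.e.\ the total degree is at least $2n-1$.
\end{enumerate}
For example, when $n=3$, parity, even combined with divisibility by $z_1z_2z_3$, still allows a monomial $z_1z_2z_3$. That would mean $\boldsymbol\mu=(0,0,0)$ and $h=-1$, producing terms $V_{-1,(0,0,0)}\cT_{g_1,1}\cT_{g_2,1}\cT_{g_3,1}$ in $\cR_{g_1+g_2+g_3-1,1}$ that \eqref{eqn:sol-blowup} does not contain. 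Your dictionary only sees monomials with all exponents $\geq1$, so your existence argument survives. But the stated shape of $V$, and hence \eqref{eqn:sol-blowup} with $h\geq0$, is not established.

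What is missing are two further inductions on $n$ through the recursion, as in the paper.
\begin{enumerate}
\item Set $z_i=0$ in \eqref{eqn:C-Theta}. The terms with $i\in J$ reassemble into the $(n-1)$-variable relation and cancel. The terms with $i$ in a proper block vanish, since $V(0)=\Theta(0)=0$ or by induction. This leaves $V(z_{[n]})|_{z_i=0}=0$.
\item For $n\geq2$, apply $\partial_{z_j}$ and set $z_j=0$. Proper blocks of size $\geq2$ containing $j$ contribute nothing by induction. The singleton block $\{j\}$ (using $V'(0)=1$) and the case $j\in J$ give the same product $\prod_a V(z_{I_a})\,\Theta^{(k+1)}(|z_J|)$, where $I_1,\ldots,I_k,J$ now decompose $[n]\setminus\{j\}$, with signs $(-1)^{k+1}$ and $(-1)^k$. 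They cancel, so $\partial_{z_j}V(z_{[n]})|_{z_j=0}=0$.
\end{enumerate}
Thus $z_j^2$ divides $V$ for $n\geq2$. With oddness, the total degree is then at least $2n+1$, so $h\geq1$ for $n\geq2$, and $h\geq0$ for $V(z)=\Theta(z)$. This stronger divisibility is also what later makes $\mathscr V$ normalized.

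Separately, correct your $\hbar$ count: $V$ carries no factor $\hbar^{2h-1}$. The right identity is $\prod_j\hbar^{2g_j-1+\mu_j}=\hbar^{2g-1}$, which together with $|\boldsymbol\mu|=2h-1+n$ gives $g=h+\sum_j g_j$. Your version would force $4h+2\sum_j g_j-2=2g-1$, which is impossible by parity.
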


\begin{proof}
	We first divide out the invertible scalar
	$\exp\big(\sum_{g\geq2}\hbar^{2g-2}\cT_{g,0}\big)$ coming from the terms with no $z$-derivative.
	Then by taking expansion of the equation, we have
	\beq\label{eqn:R-T-Theta}
	0=\sum_{n,k\geq0}\frac{1}{n!k!} \!\!
	\sum_{\substack{g_j,\mu_j\geq0\\2g_j-1+\mu_j>0}} \!\!
	\bigg(\prod_{j=1}^n\frac{\cT_{g_j,\mu_j+1}}{(\mu_j+1)!}\hbar^{2g_j-1+\mu_j}\bigg)
	\bigg(-\sum_{h\geq1}\cR_{h,1}\hbar^{2h-1}\bigg)^k
	\Theta^{(k+|\boldsymbol\mu|+n)}(0).
	\eeq
	Since $\Theta$ is odd, every nonzero term has odd degree in $\hbar$.
	In the coefficient of $\hbar^{2g-1}$, the term $\cR_{g,1}$ occurs only as $-\cR_{g,1}\Theta'(0)=-\cR_{g,1}$.
	All other terms involve only $\cT$ and $\cR_{h,1}$ with $h<g$.
	The equation therefore determines the solution uniquely.
	
	To determine the coefficients, we note first that~\eqref{eqn:C-Theta} can be rewrite in a recursion form:
	\beq\label{eqn:recursion-C}
	V(z_1,\ldots,z_n)
	=\Theta(z_1+\cdots+z_n)+\sum_{k\geq1}(-1)^k
	\sum_{\substack{I_1\sqcup\cdots\sqcup I_k\sqcup J=[n]\\I_1,\ldots,I_k\neq\emptyset\\J\neq\emptyset\text{ if }k=1}}
	\bigg(\prod_{i=1}^kV(z_{I_i})\bigg)\Theta^{(k)}(|z_J|).
	\eeq
	Every block on the right-hand side is a proper subset of $[n]$, so the recursion uniquely determines $V$, starting with $V(z)=\Theta(z)$.
	By using equations~\eqref{eq:theta-solution-kernel} and \eqref{eqn:recursion-C}, and by substituting \eqref{eqn:sol-blowup} into \eqref{eqn:R-T-Theta}, the equation holds true.
	The uniqueness gives us that ~\eqref{eqn:sol-blowup} is the unique solution of equation~\eqref{eqn:blowup-Theta}. 

It remains to prove that the functions $V(z_1,\cdots,z_n)$ determined by~\eqref{eqn:C-Theta} do have the expansion~\eqref{eq:theta-solution-kernel}.
This can be proved as follows.
Firstly, we have by induction
$$
V(-z_1,\cdots,-z_n)=-V(z_1,\cdots,z_n).
$$
Secondly, fixing an index $i\in[n]$ and setting $z_i=0$ in~\eqref{eqn:recursion-C}, the sum of the terms with $i\in J$ vanishes by the $(n-1)$-variable case of equation~\eqref{eqn:C-Theta}.
Applying this to equation~\eqref{eqn:C-Theta} and by induction, we see $V(z_1,\ldots,z_n)$ is divisible by $z_1\cdots z_n$. 
Lastly, for $n\geq 2$, applying $\partial_{z_j}$ on equation~\eqref{eqn:recursion-C}, we get
$$
\partial_{z_j}V(z_1,\ldots,z_n)
=\Theta'(z_1+\cdots+z_n)+\sum_{k\geq1}(-1)^k
\sum_{\substack{I_1\sqcup\cdots\sqcup I_k\sqcup J=[n]\\I_1,\ldots,I_k\neq\emptyset\\J\neq\emptyset\text{ if }k=1}}
\partial_{z_j}\bigg[\bigg(\prod_{i=1}^kV(z_{I_i})\bigg)\Theta^{(k)}(|z_J|)\bigg].
$$
Setting $z_j=0$, we prove the vanishing of this derivative by induction on $n$.
If $j\in I_i$ for some $|I_i|\geq 2$, then the term vanishes by induction.
The remaining terms come from $j\in J$ or $I_i=\{j\}$ for which we have $V'(0)=1$.
Therefore we have
$$ \small{
\begin{aligned}
\left.\partial_{z_j}V(z_1,\ldots,z_n)\right|_{z_j=0}
={}&\sum_{k\geq0} 
\sum_{\substack{I_1\sqcup\cdots\sqcup I_k\sqcup J=[n]\setminus\{j\}\\I_1,\ldots,I_k\neq\emptyset}}
\!\!\!\! \big((-1)^{k+1}+(-1)^k\big) \bigg(\prod_{a=1}^kV(z_{I_a})\bigg)\Theta^{(k+1)}(|z_J|)=0.
\end{aligned}
}
$$
Here the two signs correspond to $j$ in some $I_i$ and in $J$, respectively.
Thus $\prod_i z_i^2$ divides $V(z_1,\ldots,z_n)$ for $n\geq2$.
Putting all these together, we get the desired expansion.
\end{proof}

\begin{remark}
We have $V_{h,(0)}=\delta_{h,0}$ and $V_{h,(i,j)}=0$ for $i=0$ or $j=0$.
\end{remark}
We show some examples. After dividing by the same invertible scalar as in the proof, we expand~\eqref{eqn:blowup-Theta} using~\eqref{eq:delPezzo-Theta-expansion}. 
In particular,
\label{par:low-genus-theta}
\[
\Theta'(0\mid\tau)=1,\qquad
\Theta'''(0\mid\tau)=\frac{E_2(\tau)}4.
\]
The coefficient of $\hbar$ is
$-\cR_{1,1}+\cT_{1,1}+\frac{E_2(\tau)}{24}\cT_{0,3}$, and hence
\begin{equation}\label{eq:examples-genus-one}
\cR_{1,1}=\cT_{1,1}+\frac{E_2(\tau)}{24}\cT_{0,3}.
\end{equation}
To compute the coefficient of $\hbar^3$, use
\eqref{eq:examples-genus-one} to combine the translation by $z_*$
with the differential operator in~\eqref{eqn:blowup-Theta}.
The exponent, through order $\hbar^3$, is
\[ \small
\hbar\frac{\cT_{0,3}}6
\left(\partial_z^3-\frac{E_2(\tau)}4\partial_z\right)
+\hbar^2\left(\frac{\cT_{1,2}}2\partial_z^2
+\frac{\cT_{0,4}}{24}\partial_z^4\right) +\hbar^3\left(
(\cT_{2,1}-\cR_{2,1})\partial_z
+\frac{\cT_{1,3}}6\partial_z^3
+\frac{\cT_{0,5}}{120}\partial_z^5\right).
\]
Its exponential has three contributions of order $\hbar^3$:
the cubic-order term, the product of the first- and second-order
terms, and one sixth of the cube of the first-order term.
Applying these operators to $\Theta$ at $z=0$ and by using~\eqref{eq:delPezzo-Theta-expansion},
where $E_8=E_4^2$, we get
\begin{equation}\label{eq:examples-genus-two}
\begin{aligned}
\cR_{2,1}={}&\cT_{2,1}
+\frac{E_2}{24}\cT_{1,3}
+\frac{5E_2^2-2E_4}{5760}\cT_{0,5} +\frac{E_2^2-E_4}{288}\cT_{0,3}\cT_{1,2}\\
&+\frac{5E_2^3-9E_2E_4+4E_6}{20736}
\cT_{0,3}\cT_{0,4} +\frac{E_2^4-4E_2^2E_4+4E_2E_6-E_4^2}{82944}\cT_{0,3}^3,
\end{aligned}
\end{equation}
Here all Eisenstein series are evaluated at $2\pi\mathrm i\tau=\cT_{0,2}$.

\subsection{Solution of the NS limit blowup equation as a formal action}
In this subsection, we express equation~\eqref{eqn:sol-blowup} as a formal action.
To obtain this, we first establish the analogue of the divisor equation for $V_{h,\boldsymbol\mu}$.

\begin{lemma}\label{lem:theta-kernel-divisor}
We have 
$$
\mathbf q\frac{d}{d\mathbf q}V_{h,\boldsymbol\mu}
=V_{h,(\boldsymbol\mu,1)}.
$$
\end{lemma}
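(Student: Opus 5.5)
The plan is to prove the generating-function form of the statement and then compare coefficients. In terms of~\eqref{eq:theta-solution-kernel}, the claim is equivalent to
\[
\mathbf q\frac{d}{d\mathbf q}V(z_1,\ldots,z_n)=[w^2]\,V(z_1,\ldots,z_n,w),\qquad n\geq1,
\]
since a monomial $w^{\mu+1}$ with $\mu=1$ is $w^2$, and the constraint $|\boldsymbol\mu|+1=2h-1+(n+1)$ is consistent. Write $\Delta(z_1,\ldots,z_n)$ for the difference of the two sides. The idea is to apply both operations, $\mathbf q\,\partial_{\mathbf q}$ and $[w^2]$, to the theta relation~\eqref{eqn:C-Theta}, subtract, and argue by induction on $n$.

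\textbf{Step 1: a heat equation for $\Theta$.} I expect this to be the main obstacle, because the normalizations must be tracked exactly. Use $\Theta(z)=-\mathrm i\,\eta^{-3}\theta[\substack{1/2\\1/2}](\frac{z}{2\pi\mathrm i}|\tau)$ from the proof of the Lemma in \S\ref{subsec:geometric-NS-equation}. The theta function satisfies $\partial_\tau\theta=\frac1{4\pi\mathrm i}\partial_w^2\theta$. Combined with $\mathbf q\partial_{\mathbf q}=\frac1{2\pi\mathrm i}\partial_\tau$ and $\mathbf q\partial_{\mathbf q}\log\eta=E_2/24$, this gives
\[
\mathbf q\partial_{\mathbf q}\Theta(z)=\tfrac12\Theta''(z)-\tfrac{E_2}{8}\Theta(z)=\tfrac12\Theta''(z)-\tfrac12\Theta'''(0)\,\Theta(z),
\]
using $\Theta'''(0)=E_2/4$. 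As a sanity check, I will verify this identity directly from the expansion~\eqref{eq:delPezzo-Theta-expansion} at low order. Differentiating in $z$ gives the same identity for every $\Theta^{(k)}$.

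\textbf{Step 2: extract $[w^2]$ from~\eqref{eqn:C-Theta} with $n+1$ variables $z_{[n]},w$.} There are three places $w$ can sit.
\begin{itemize}
\item $w\in J$: then $[w^2]\Theta^{(k)}(|z_J|+w)=\tfrac12\Theta^{(k+2)}(|z_J|)$.
\item $w$ forms a singleton block: this contributes $[w^2]V(w)=[w^2]\Theta(w)=0$, since $\Theta$ is odd.
\item $w$ joins a nonempty block $I$: this contributes $[w^2]V(z_I,w)$ times the rest of the product. No cross terms arise, because $w$ occurs in exactly one factor and $V(z_I,w)$ is divisible by $w^2$ by the expansion proved in Proposition~\ref{prop:solution-of-theta-blowup}.
\end{itemize}
Next apply $\mathbf q\partial_{\mathbf q}$ to~\eqref{eqn:C-Theta} with $n$ variables. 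By Leibniz, this produces $\mathbf q\partial_{\mathbf q}V(z_{I_i})$ on each block, plus $\tfrac12\Theta^{(k+2)}(|z_J|)-\tfrac{E_2}{8}\Theta^{(k)}(|z_J|)$ by Step 1. Subtracting the two identities, the $\tfrac12\Theta^{(k+2)}$ terms cancel. The $E_2/8$ terms sum to $-\tfrac{E_2}{8}$ times the $n$-variable relation~\eqref{eqn:C-Theta}, so they vanish. What remains is
\[
\sum_{k\geq1}(-1)^k\!\!\sum_{I_1\sqcup\cdots\sqcup I_k\sqcup J=[n]}\ \sum_{j=1}^k\Delta(z_{I_j})\prod_{i\neq j}V(z_{I_i})\,\Theta^{(k)}(|z_J|)=0.
\]

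\textbf{Step 3: induction on $n$.} Assume $\Delta$ vanishes for fewer than $n$ variables; the base case $n=1$ requires no hypothesis. Then every term above with $I_j\neq[n]$ vanishes. The only survivor is $k=1$, $I_1=[n]$, $J=\emptyset$, which equals $-\Delta(z_1,\ldots,z_n)\Theta'(0)=-\Delta(z_1,\ldots,z_n)$. Hence $\Delta\equiv0$. Comparing coefficients of $z_1^{\mu_1+1}\cdots z_n^{\mu_n+1}$ on both sides then yields $\mathbf q\frac{d}{d\mathbf q}V_{h,\boldsymbol\mu}=V_{h,(\boldsymbol\mu,1)}$.
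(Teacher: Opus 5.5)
Your proposal is correct and follows the paper's proof essentially step for step. The shared steps are the heat equation $\mathbf q\partial_{\mathbf q}\Theta=\frac12\Theta''-\frac{E_2}{8}\Theta$, the three-case extraction of $[w^2]$ from the $(n+1)$-variable theta relation, subtraction to obtain the relation in $\Delta$, and induction on $n$ leaving only $-\Delta(z_1,\ldots,z_n)\Theta'(0)$. You are slightly more explicit than the paper in two places: you derive the heat equation from the theta-constant representation, and you note that the $-\frac{E_2}{8}\Theta^{(k)}$ terms vanish because they sum to a multiple of the $n$-variable relation~\eqref{eqn:C-Theta}.
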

\begin{proof}
This is equivalent to 
	\begin{equation}\label{eq:theta-kernel-divisor}
		\mathbf q\frac{d}{d\mathbf q}V(z_1,\cdots,z_n)
		=[w^2]V(z_1,\cdots,z_n,w).
	\end{equation}
Apply $\mathbf q\frac{d}{d\mathbf q}$ to~\eqref{eqn:C-Theta} and by using
$\mathbf q\frac{d}{d\mathbf q}\Theta(z)=\frac12\Theta''(z)-\frac{E_2}{8}\Theta(z)$,
we get
\begin{align*}
0={}&\sum_{k\geq1}(-1)^k
\sum_{\substack{I_1\sqcup\cdots\sqcup I_k\sqcup J=[n]\\I_1,\ldots,I_k\neq\emptyset}}
\sum_{a=1}^k
\bigg(\mathbf q\frac{d}{d\mathbf q}V(z_{I_a})\bigg)
\prod_{\substack{1\leq i\leq k\\i\neq a}}V(z_{I_i})
\Theta^{(k)}(|z_J|)\\
&+\frac12\sum_{k\geq0}(-1)^k
\sum_{\substack{I_1\sqcup\cdots\sqcup I_k\sqcup J=[n]\\I_1,\ldots,I_k\neq\emptyset}}
\bigg(\prod_{i=1}^kV(z_{I_i})\bigg)\Theta^{(k+2)}(|z_J|).
\end{align*}
Next, take the coefficient of $w^2$ in~\eqref{eqn:C-Theta} for the variables $z_1,\cdots,z_n,w$.
If $w$ belongs to $J$, its contribution is the second sum above.
If $\{w\}$ is a block, its contribution vanishes since $V(w)=\Theta(w)=w+O(w^3)$.
Otherwise, $w$ joins a nonempty block $I_a$, and the corresponding factor is $[w^2]V(z_{I_a},w)$.
Subtracting the two equations, we obtain
$$
0=\sum_{k\geq1}(-1)^k
\sum_{\substack{I_1\sqcup\cdots\sqcup I_k\sqcup J=[n]\\I_1,\ldots,I_k\neq\emptyset}}
\sum_{a=1}^k\Delta(z_{I_a})
\prod_{\substack{1\leq i\leq k\\i\neq a}}V(z_{I_i})
\Theta^{(k)}(|z_J|),
$$
where $\Delta(z_1,\cdots,z_n):=\mathbf q\frac{d}{d\mathbf q}V(z_1,\cdots,z_n)-[w^2]V(z_1,\cdots,z_n,w)$.
For $n=1$, this gives $-\Theta'(0)\Delta(z_1)=-\Delta(z_1)=0$.
Inductively, every term involving a proper block vanishes, leaving only
$-\Delta(z_1,\cdots,z_n)=0$.
This proves~\eqref{eq:theta-kernel-divisor} and thus the Lemma.
\end{proof}

We introduce $\mathscr V\in\operatorname{Sym}(R)$ by
\[
\mathscr V(z_1,\ldots,z_n;\mathbf q)
:=\frac{V(z_1,\ldots,z_n;\mathbf q)}{z_1\cdots z_n}.
\]
Proposition~\ref{prop:solution-of-theta-blowup} and Lemma~\ref{lem:theta-kernel-divisor} show that $\mathscr V$ is normalized and satisfies~\eqref{eq:kernel-divisor-identity}.

\begin{proposition}\label{prop:blowup-formal-action}
Together with $\cT_{0,1}=\cR_{0,1}$, the NS limit vanishing blowup equation (equivalently, its theta form~\eqref{eqn:blowup-Theta} or solution form~\eqref{eqn:sol-blowup}) is equivalent to
\begin{equation}\label{eq:blowup-formal-action}
\cR=\mathscr V\bdot\cT.
\end{equation}
\end{proposition}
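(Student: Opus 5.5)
The plan is to mimic the proof of Proposition~\ref{prop:LRC-1}, now with $\mathscr V$ in place of $\mathscr F$ and with the roles of $\cR$ and $\cT$ interchanged. Write $\mathscr V(z_1,\ldots,z_n)=\delta_{n,1}+\sum_{\mu}V_{h,\boldsymbol\mu}\prod_i z_i^{\mu_i}$, where $|\boldsymbol\mu|=2h-1+n$. Normalization of $\mathscr V$ (Proposition~\ref{prop:solution-of-theta-blowup}) gives $\mu_i\ge1$ for all $i$ except the single term $V_{0,(0)}=1$ when $n=1$. I will apply the expanded form~\eqref{eqn:A-bdot-S-Q} from the proof of Proposition~\ref{prop:well-define-bdot} with $\mathcal S=\cT$ and $\lambda=\hbar$. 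This gives
\[
(\mathscr V\bdot\cT)=\cT+\sum_{m\ge1}\frac1{m!}\sum_{\mu_i\ge1}V_{h,\boldsymbol\mu}\big(e^{D\cT_{0}}\big)\prod_{i=1}^m[\cT^{(\mu_i)}]_+ .
\]
To use this, I first have to fix the index dictionary. The coefficient of $\prod z_i^{a_i+1}$ in $\mathscr A$ corresponds to $\prod z_i^{\mu_i}$ with $\mu_i=a_i+1$. The role of ``$\mathcal S_0$'' is played by $\cT_{0,1}$, so the evaluation point is $\mathbf q=e^{\cT_{0,2}}=e^{2\pi\mathrm i\tau}$, exactly as in the identification made in \S\ref{subsec:geometric-NS-equation}. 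The resummation $\sum_k V_{h,(\boldsymbol\mu,1^k)}u^k/k!=V_{h,\boldsymbol\mu}(e^{\cT_{0,2}})$ is provided by Lemma~\ref{lem:theta-kernel-divisor}: in the $\mathscr V$-normalization the inserted divisor variable contributes $z^{\mu+1}$ with $\mu=1$.

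With this, expanding $[\cT^{(\mu_i)}]_+=\sum_{2g+\mu_i-1\ge1}\hbar^{2g_i+\mu_i-1}\cT_{g_i,\mu_i+1}$ and taking the coefficient of $\hbar^{2g-1}$ should give
\[
[\hbar^{2g-1}](\mathscr V\bdot\cT)=\cT_{g,1}+\sum_{n\ge1}\sum V_{h,\boldsymbol\mu}\frac{\prod_j\cT_{g_j,\mu_j+1}}{|\Aut(\mathbf g,\boldsymbol\mu)|}.
\]
Here the $1/m!$ turns into $1/|\Aut|$ by grouping equal pairs $(g_j,\mu_j)$. The range $2g_j-1+\mu_j>0$ is exactly the condition ``$\mu_j\ge1$, or $\mu_j=0$ with $g_j\ge1$''. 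The $\mu_j=0$ factors are the contributions of the leading term: the $n=1$, $\mu=0$ term $V_{0,(0)}\cT_{g,1}$ accounts for the isolated $\cT$. I still have to check that no other $\mu_j=0$ terms appear, which follows from the Remark after Proposition~\ref{prop:solution-of-theta-blowup}. The key check is the $\hbar$-exponent count $\sum_j(2g_j+\mu_j-1)=2g-1$, which is equivalent to $|\boldsymbol\mu|=2h-1+n$ with $h=g-\sum g_j$. This matches~\eqref{eqn:sol-blowup} term by term for $g\ge1$.

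In genus $0$, the $\hbar^{-1}$ coefficient of $\mathscr V\bdot\cT$ is $\cT_{0,1}$, since all correction terms have nonnegative $\hbar$-degree. So the genus-$0$ component of~\eqref{eq:blowup-formal-action} is exactly $\cR_{0,1}=\cT_{0,1}$. Hence~\eqref{eq:blowup-formal-action} is equivalent to $\cT_{0,1}=\cR_{0,1}$ together with~\eqref{eqn:sol-blowup}. By Proposition~\ref{prop:solution-of-theta-blowup}, the latter is equivalent to~\eqref{eqn:blowup-Theta}, and hence to~\eqref{eq:blowup-local-relative} by the theta-form Lemma. The step I expect to be the main obstacle is the bookkeeping, specifically reconciling the shift conventions. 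One issue is that the $\mathscr V$-exponents are $\mu$ rather than $a+1$, which places the divisor insertion at $[w^2]$ rather than $[w]$, so it has to be matched against the normalization of $\mathscr V$ used in $\bdot$. The other is confirming that the $\mathbf q$-shift term $\frac{\log\mathbf q}{\lambda E\cdot E}\log\mathbf Q^E$ cancels exactly as in Proposition~\ref{prop:well-define-bdot}, so that no spurious genus-zero or $\log$ terms survive.
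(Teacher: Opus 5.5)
Your proposal is correct and is essentially the paper's proof: apply \eqref{eqn:A-bdot-S-Q} with $\mathscr A=\mathscr V$ and $\mathcal S=\cT$, compare coefficients of $\lambda^{2g-1}$ to recover \eqref{eqn:sol-blowup} for $g\geq1$, and read off $\cR_{0,1}=\cT_{0,1}$ in genus zero. Two small fixes: the evaluation point in your display should be $e^{D\cT_{0,1}}=e^{\cT_{0,2}}$ rather than $e^{D\cT_0}$; and the vanishing of $V_{h,\boldsymbol\mu}$ when some $\mu_j=0$ and $n\geq3$ comes from the divisibility $z_1^2\cdots z_n^2\mid V$ (the normalization of $\mathscr V$), not from the Remark, which only treats $n\leq2$. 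The two bookkeeping ``obstacles'' you flag are already handled by \eqref{eqn:A-bdot-S-Q} and Lemma~\ref{lem:theta-kernel-divisor}.
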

\begin{proof}
This follows form a similar discussion as the proof of Proposition~\ref{prop:well-define-bdot}
by applying~\eqref{eqn:A-bdot-S-Q} with $\mathscr A=\mathscr V$ and $\mathcal S=\cT$.
By substituting the expansion~\eqref{eq:theta-solution-kernel} and comparing coefficients of $\lambda^{2g-1}$, we get~\eqref{eqn:sol-blowup} for $g\geq1$.
The genus-zero term is the assumed equality $\cT_{0,1}=\cR_{0,1}$.
\end{proof}

\subsection{Relation between blowup equation and local-relative correspondence}
\label{subsec:NS-local-relative-comparison}
To complete the proof of Theorem~\ref{main_theorem}, it remains to identify $\mathscr V$ with the convolution inverse $\mathscr G$.

We first do some preparations. For $\mathscr A\in \operatorname{Sym}(R)$, define its disconnected version by
$$
\mathscr A^\bullet(z_1,\ldots,z_n)
:=\sum_{\{I_1,\ldots,I_k\}\in\mathcal P_n}
\prod_{i=1}^k\mathscr A(z_{I_i}),
\qquad \mathscr A^\bullet(z_\emptyset):=1.
$$
We also note that the disconnected operation is invertible. We have
$$
\mathscr A(z_1,\ldots,z_n)
=\sum_{\{I_1,\ldots,I_k\}\in\mathcal P_n}
(-1)^{k-1}(k-1)!\prod_{i=1}^k\mathscr A^\bullet(z_{I_i}),
\qquad n\geq1.
$$

\begin{proposition}\label{prop:circ-to-bullet}
We have $\mathscr I^\bullet(z_1,\ldots,z_n)=1$, $n\geq1$.
Furthermore, for $\mathscr A,\mathscr B\in\operatorname{Sym}(R)$,
$$
(\mathscr A\star \mathscr B)^\bullet=\mathscr A^\bullet\star \mathscr B. 
$$
\end{proposition}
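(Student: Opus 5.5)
The first claim is immediate from the definition of the disconnected version. In $\mathscr I^\bullet(z_1,\ldots,z_n)=\sum_{\pi}\prod_{I\in\pi}\mathscr I(z_I)$, a partition contributes a nonzero term only if every block is a singleton, since $\mathscr I(z_I)=\delta_{|I|,1}$. Only the partition into singletons survives, and it contributes $1$.

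For the identity $(\mathscr A\star\mathscr B)^\bullet=\mathscr A^\bullet\star\mathscr B$, I would expand both sides as sums over two-level partitions, as in the associativity argument of Proposition~\ref{prop:convolution-basic-properties}.

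Expanding the left side, we first choose a partition $\sigma=\{K_1,\ldots,K_m\}$ of $[n]$. Then, for each $K_j$, we choose a partition $\pi_j$ of $K_j$, and the term is
\[
\prod_{j=1}^m\mathscr A\big(|z_{I}|\big)_{I\in\pi_j}\prod_{I\in\pi_j}\mathscr B(z_I).
\]
Equivalently, the data consist of a fine partition $\pi=\bigsqcup_j\pi_j$ of $[n]$ together with a coarse partition $\rho$ of the set of blocks of $\pi$. The blocks of $\rho$ are the groups $\pi_j$, and the term is
\[
\prod_{I\in\pi}\mathscr B(z_I)\cdot\prod_{P\in\rho}\mathscr A\big((|z_I|)_{I\in P}\big).
\]

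Expanding the right side, $(\mathscr A^\bullet\star\mathscr B)(z_{[n]})$ is a sum over fine partitions $\pi=\{I_1,\ldots,I_k\}$ of the terms
\[
\mathscr A^\bullet(|z_{I_1}|,\ldots,|z_{I_k}|)\prod_i\mathscr B(z_{I_i}).
\]
Expanding $\mathscr A^\bullet$ in turn sums over partitions $\rho$ of the index set $\{1,\ldots,k\}$, that is, of the blocks of $\pi$, with term $\prod_{P\in\rho}\mathscr A((|z_{I}|)_{I\in P})$.

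The two expansions are indexed by the same set of pairs $(\pi,\rho)$ and have identical summands. The map sending $(\sigma,(\pi_j)_j)$ to $(\bigsqcup_j\pi_j,\rho)$ is a bijection, with inverse obtained by taking $K_j=\bigcup_{I\in P_j}I$. This gives the identity.

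The only point needing care is the empty-input convention $\mathscr A^\bullet(z_\emptyset)=1$. For $n\ge1$ it never arises, because both sides involve only nonempty sets. The case $n=0$ is either excluded or holds trivially, with both sides equal to $1$.

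I expect no real obstacle. The main work is stating the bijection between the nested partition data on the two sides precisely.
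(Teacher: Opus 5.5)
Your proposal is correct and follows essentially the same route as the paper. The first claim holds because only the all-singletons partition survives. For the second, you reindex the double sum over a partition of $[n]$ and partitions of each block as a fine partition $\pi$ together with a grouping $\rho$ of its blocks, which resums to $\mathscr A^\bullet$ evaluated at the block sums; your bijection $(\sigma,(\pi_j)_j)\mapsto(\bigsqcup_j\pi_j,\rho)$ simply states the paper's regrouping step more explicitly.
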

\begin{proof}
For the first part, by definition, we have immediately
	\beq
	\mathscr I^\bullet(z_1, \dots, z_n) = \sum_{\pi \in \mathcal{P}_n} \prod_{J \in \pi} \delta_{|J|,1}=1.
	\eeq
For the second part, let $\mathcal P(J)$ denote the set of set partitions of $J$. By definition,
\begin{align*}
&\, (\mathscr A\star \mathscr B)^\bullet(z_1,\ldots,z_n)\\
=&\, \sum_{\{J_1,\ldots,J_k\}\in\mathcal P_n}
\sum_{\substack{\{I_{j,1},\cdots,I_{j,k_j}\}\in\mathcal P(J_j)\\1\leq j\leq k}}
\bigg(\prod_{j=1}^{k}
\mathscr A(|z_{I_{j,1}}|,\ldots,|z_{I_{j,k_j}}|)\bigg)
\bigg(\prod_{j=1}^{k}\prod_{i=1}^{k_j}
\mathscr B(z_{I_{j,i}})\bigg).
\end{align*}
Fix a partition $\{I_1,\ldots,I_l\}$ of $\{1,\cdots,n\}$ formed by all of the blocks $I_{j,i}$. 
Summing over all ways to group these blocks into $J_1,\ldots,J_k$ gives $\mathscr A^\bullet(|z_{I_1}|,\ldots,|z_{I_k}|)$. Therefore,
\begin{align*}
(\mathscr A\star \mathscr B)^\bullet(z_1,\ldots,z_n)
=\sum_{\{I_1,\ldots,I_k\}\in\mathcal P_n}
\mathscr A^\bullet(|z_{I_1}|,\ldots,|z_{I_k}|)
\prod_{i=1}^{k}\mathscr B(z_{I_i})
\end{align*}
which equals to exactly $(\mathscr A^\bullet\star \mathscr B)(z_1,\ldots,z_n)$.
\end{proof}

Now we are ready to prove that $\mathscr V$ is the right $\star$-inverse of $\mathscr F$.
\begin{proposition}\label{prop:F-star-C}
The following equation holds:
\beq\label{eqn:relation-F-and-C}
\mathscr F\star \mathscr V=\mathscr I.
\eeq
As a corollary, by the associativity of $\star$, we have
$$
\mathscr G=\mathscr G\star(\mathscr F\star \mathscr V)=(\mathscr G\star \mathscr F)\star \mathscr V=\mathscr V.
$$
\end{proposition}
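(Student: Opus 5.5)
The plan is to pass to disconnected series and use Proposition~\ref{prop:circ-to-bullet}. Since $\mathscr I^\bullet\equiv1$ and $(\mathscr F\star\mathscr V)^\bullet=\mathscr F^\bullet\star\mathscr V$, and the disconnected operation is invertible, \eqref{eqn:relation-F-and-C} is equivalent to the identity
\[
\sum_{\{I_1,\ldots,I_k\}\in\mathcal P_n}\mathscr F^\bullet(|z_{I_1}|,\ldots,|z_{I_k}|)\prod_{i=1}^k\mathscr V(z_{I_i})=1,\qquad n\geq1 .
\]
The point of this reformulation is that $\mathscr F^\bullet$, not $\mathscr F$, has a closed form. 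By the GW/Hurwitz correspondence of Okounkov--Pandharipande, the disconnected stationary series of $E$ is a $\mathbf q$-bracket of shifted symmetric power sums. Summing over the descendant exponents with weights $z_i^{a_i+1}$ turns it into the Bloch--Okounkov $n$-point function $F_{\mathrm{BO}}(z_1,\ldots,z_n)$.

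The first step is to prove this identification precisely. I expect
\[
\mathscr F^\bullet(z_1,\ldots,z_n)=\Big(\prod_{i=1}^n z_i\Big)\,F_{\mathrm{BO}}(z_1,\ldots,z_n)
\]
up to the genus-one constant $F^E_{1,\emptyset}$, which should be absorbed by dropping the $\eta$-factor or $\langle1\rangle_{\mathbf q}$ normalization. I also need to check that the extra factor $(z_1+\cdots+z_n)$ in \eqref{eqn:F_series} is exactly what converts the $\mathbf z^{\mathbf a+1}$-generating function of connected invariants into the cumulants of $\prod_i z_iF_{\mathrm{BO}}$. The basic mechanism is the identity $\mathbf e(z)=\sum_a z^{a+1}\mathbf p_{a+1}/(a+1)!$ together with the unstable terms: the $1/z$ pole of $\mathbf e(z)$ produces the $\delta_{n,1}$ and the $z^{-1}$ normalization. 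Multiplying by $\prod_i z_i$ then matches $\mathscr V=V/\prod z_i$. After this step the target becomes
\[
\sum_{\pi}\Big(\prod_{I\in\pi}|z_I|\Big)F_{\mathrm{BO}}\big((|z_I|)_{I\in\pi}\big)\prod_{I\in\pi}\frac{V(z_I)}{\prod_{i\in I}z_i}=1 .
\]

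The second step is to invoke Zagier's recursion \cite[Theorem~4]{Zagier2016}. It expresses $\Theta(z_1+\cdots+z_m)F_{\mathrm{BO}}(z_1,\ldots,z_m)$ as an alternating sum, over subsets $J\subsetneq[m]$, of $\Theta^{(m-|J|)}(|z_J|)\,F_{\mathrm{BO}}(z_J)$. Equivalently, after the normalization $\Theta'(0)=1$, it gives a triangular relation between $F_{\mathrm{BO}}$ and the derivatives $\Theta^{(k)}$. I will prove the target identity by induction on $n$.

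To do so, multiply the target by $\Theta(z_1+\cdots+z_n)$. Apply Zagier's recursion to each $F_{\mathrm{BO}}$ evaluated at the block sums $(|z_I|)_{I\in\pi}$. In each resulting term, the blocks in the complement of the chosen subset $J$ of blocks become derivative insertions $\Theta^{(k)}$ at the sum of the remaining variables.

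Then regroup the double sum over the pairs (partition $\pi$, subset of blocks). The factors $V(z_I)$ that were promoted to derivative insertions, together with the union of the remaining blocks, reproduce exactly the left-hand side of \eqref{eqn:C-Theta}, evaluated on suitable subsets of the variables. Those terms vanish by Proposition~\ref{prop:solution-of-theta-blowup}. The terms that are left over are, by the inductive hypothesis applied to proper subsets, equal to $1$ times $\Theta$-factors. These should telescope to $\Theta(z_1+\cdots+z_n)$, which gives the identity after dividing by the invertible series $\Theta(z_1+\cdots+z_n)/(z_1+\cdots+z_n)$.

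The corollary $\mathscr G=\mathscr V$ then follows exactly as displayed in the statement, from associativity (Proposition~\ref{prop:convolution-basic-properties}).

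I expect the main obstacle to be the bookkeeping in the first step and the sign and factorial conventions in the regrouping. Specifically, I must get the normalization of Zagier's $F_{\mathrm{BO}}$ to match exactly, including its $1/z$-poles, the $E_2$-dependence hidden in $\Theta$, and the sign $(-1)^k$ with unordered blocks in \eqref{eqn:C-Theta}. Otherwise the cancellation against \eqref{eqn:C-Theta} will be off by a factor or sign.

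To control this, I will first check the cases $n=1$ and $n=2$ explicitly using $\Theta'''(0)=E_2/4$ and the genus-one and genus-two data \eqref{eq:examples-genus-one} and \eqref{eq:examples-genus-two}. I will then fix the conventions before running the general induction.
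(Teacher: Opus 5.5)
Your first reduction, $\mathscr F\star\mathscr V=\mathscr I\iff\mathscr F^\bullet\star\mathscr V=1$, is correct. The identification on which the rest of the plan rests is false, however.

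Write $\mathscr F=(z_1+\cdots+z_n)F$, where $F$ is the connected stationary series with $F(z)=1/\Theta(z)$. It is $F^\bullet$, not $\mathscr F^\bullet$, that equals the Bloch--Okounkov function $F_{\mathrm{BO}}$. The factor $z_1+\cdots+z_n$ does not turn connected invariants into cumulants of $\prod_i z_iF_{\mathrm{BO}}$; those cumulants are $z_1\cdots z_n\,F$. Already for $n=2$,
\[
\mathscr F^\bullet(z_1,z_2)-z_1z_2F_{\mathrm{BO}}(z_1,z_2)=(z_1+z_2-z_1z_2)\,F(z_1,z_2)\neq0 .
\]
Consequently your displayed target identity is false for $n=2$. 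Zagier's relation gives $F_{\mathrm{BO}}(z_1,z_2)\Theta(z_1)\Theta(z_2)=\bigl(\Theta'(z_1)\Theta(z_2)+\Theta'(z_2)\Theta(z_1)\bigr)/\Theta(z_1+z_2)$. Hence the left-hand side of your target equals
\[
1+\frac{V(z_1,z_2)}{\Theta(z_1+z_2)}\Big(\frac{z_1+z_2}{z_1z_2}-1\Big),
\]
and $V(z_1,z_2)=\frac{E_2^2-E_4}{288}z_1^2z_2^2(z_1+z_2)+\cdots\neq0$. The correct form $\mathscr F^\bullet\star\mathscr V=1$ is not directly accessible to Zagier's recursion. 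That recursion concerns $F^\bullet$, not $\mathscr F^\bullet=\sum_\pi\prod_{I\in\pi}|z_I|F(z_I)$.

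The missing idea is to strip the prefactors \emph{before} passing to disconnected series. For every $\pi\in\mathcal P_n$ one has $\sum_{I\in\pi}|z_I|=z_1+\cdots+z_n$ and $\prod_{I\in\pi}\prod_{i\in I}z_i=z_1\cdots z_n$. So, with $V=z_1\cdots z_n\,\mathscr V$,
\[
(\mathscr F\star\mathscr V)(z_1,\ldots,z_n)=\frac{z_1+\cdots+z_n}{z_1\cdots z_n}\,(F\star V)(z_1,\ldots,z_n),
\]
and the claim is equivalent to $F\star V=\mathscr I$. Only now apply Proposition~\ref{prop:circ-to-bullet}, to $F$. This gives the equivalent identity $F^\bullet\star V=1$, which involves the genuine Bloch--Okounkov function.

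From here your induction works. Apply Zagier's relation to the block sums of each partition, multiply by the corresponding product of $V$'s, and regroup by the union $J$ of the blocks fed into $F^\bullet$. This yields
\[
\sum_{k\geq0}(-1)^k\sum_{I_1\sqcup\cdots\sqcup I_k\sqcup J=[n]}\Theta^{(k)}(|z_J|)\,(F^\bullet\star V)(z_J)\prod_{i=1}^kV(z_{I_i})=0 .
\]
For $k\geq1$, the induction hypothesis (with $(F^\bullet\star V)(z_\emptyset)=1$) replaces $(F^\bullet\star V)(z_J)$ by $1$. Then \eqref{eqn:C-Theta} turns the $k\geq1$ part into $-\Theta(z_1+\cdots+z_n)$. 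What remains is $\Theta(z_1+\cdots+z_n)\bigl((F^\bullet\star V)(z_1,\ldots,z_n)-1\bigr)=0$.
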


\begin{proof}
We introduce the series $F(z_1,\cdots,z_n)$ by
$$
\mathscr F(z_1,\ldots,z_n)=(z_1+\cdots+z_n)F(z_1,\ldots,z_n).
$$
Then we have
$$
(\mathscr F\star \mathscr V)(z_1,\ldots,z_n)
=\frac{z_1+\cdots+z_n}{z_1\cdots z_n}(F\star V)(z_1,\ldots,z_n).
$$
Since this factor is nonzero and equals $1$ for $n=1$, equation~\eqref{eqn:relation-F-and-C} is equivalent to $F\star V=\mathscr I$.
By Proposition~\ref{prop:circ-to-bullet} and the invertibility of the disconnected operation, this is equivalent to
\begin{equation}\label{eq:C-star-Fbullet}
(F^\bullet\star V)(z_1,\ldots,z_n)=1.
\end{equation}

We prove equation~\eqref{eq:C-star-Fbullet} by induction on $n$.
Recall the following recursive form of the Bloch--Okounkov formula proved by Zagier~\cite{Zagier2016}:
\begin{equation}\label{Zagier_recursive_relation}
\sum_{I\subseteq[m]}(-1)^{m-|I|}
\Theta^{(m-|I|)}(|z_I|)F^\bullet(z_I)=0,
\qquad m\geq1.
\end{equation}
For $n=1$, equation~\eqref{Zagier_recursive_relation} gives $F^\bullet(z_1)\Theta(z_1)=\Theta'(0)=1$.
Since $V(z_1)=\Theta(z_1)$, we have $(F^\bullet\star V)(z_1)=1$.
For each partition $\{I_1,\ldots,I_m\}\in\mathcal P_n$, we apply equation~\eqref{Zagier_recursive_relation} to the variables  $|z_{I_1}|,\allowbreak\ldots,\allowbreak|z_{I_m}|$, multiply by $\prod_{i=1}^m V(z_{I_i})$, and sum over $\mathcal P_n$.
In each term, denote the blocks occurring in the arguments of $F^\bullet$ by $J_1,\ldots,J_\ell$, and the remaining blocks by $I_1,\ldots,I_k$.
Then the derivative order is $k$ and the sign is $(-1)^k$, so we get
\begin{align*} 
0={}&\sum_{k,\ell\geq0}(-1)^k  \!\!\!\!\!\!\!\!\!\!\!
\sum_{\substack{I_1\sqcup\cdots\sqcup I_k\sqcup J_1\sqcup\cdots\sqcup J_\ell=[n]\\I_i,J_j\neq\emptyset}} \!\!\!\!\!\!\!
\Theta^{(k)}\bigg(\sum_{j=1}^\ell|z_{J_j}|\bigg)
F^\bullet(|z_{J_1}|,\ldots,|z_{J_\ell}|)\bigg(\prod_{i=1}^kV(z_{I_i})\bigg)
\bigg(\prod_{j=1}^\ell V(z_{J_j})\bigg).
\end{align*}
Here $I_1,\ldots,I_k$ and $J_1,\ldots,J_\ell$ are unordered within each group.
Set $J=J_1\sqcup\cdots\sqcup J_\ell$.
For fixed $J$, 
$$
\sum_{\ell\geq0}
\sum_{\substack{J_1\sqcup\cdots\sqcup J_\ell=J\\J_j\neq\emptyset}}
F^\bullet(|z_{J_1}|,\ldots,|z_{J_\ell}|)
\prod_{j=1}^\ell V(z_{J_j})
=(F^\bullet\star V)(z_J).
$$
Therefore, the preceding sum can be rewritten as
\begin{equation}\label{eq:Sigma-CF}
0=\sum_{k\geq0}(-1)^k
\sum_{\substack{I_1\sqcup\cdots\sqcup I_k\sqcup J=[n]\\I_1,\ldots,I_k\neq\emptyset}}
\Theta^{(k)}(|z_J|)(F^\bullet\star V)(z_J)
\bigg(\prod_{i=1}^kV(z_{I_i})\bigg).
\end{equation}

Assume that equation~\eqref{eq:C-star-Fbullet} holds for fewer than $n$ variables.
For $k\geq1$, we have $J\subsetneq[n]$ and thus $(F^\bullet\star V)(z_J)=1$ by induction.
Separating the term with $k=0$ in~\eqref{eq:Sigma-CF}, we have
$$ 
\begin{aligned}
0={}&\Theta(z_1+\cdots+z_n)(F^\bullet\star V)(z_1,\ldots,z_n) +\sum_{k\geq1}(-1)^k \!\!\!\!
\sum_{\substack{I_1\sqcup\cdots\sqcup I_k\sqcup J=[n]\\I_1,\ldots,I_k\neq\emptyset}}\!\!\!\!
\Theta^{(k)}(|z_J|)\bigg(\prod_{i=1}^kV(z_{I_i})\bigg).
\end{aligned}
$$
By equation~\eqref{eqn:C-Theta}, the second sum is $-\Theta(z_1+\cdots+z_n)$.
Hence
$$
\Theta(z_1+\cdots+z_n)\big((F^\bullet\star V)(z_1,\ldots,z_n)-1\big)=0.
$$
Since $\Theta(z_1+\cdots+z_n)\ne0$, we obtain $(F^\bullet\star V)(z_1,\ldots,z_n)=1$.
This proves equation~\eqref{eq:C-star-Fbullet}, and thus $F\star V=\mathscr I$ and equation~\eqref{eqn:relation-F-and-C} hold.
\end{proof}

\begin{proof}[Proof of Theorem~\ref{main_theorem}]
It follows immediately from Propositions~\ref{prop:LRC-1}, \ref{prop:blowup-formal-action} and~\ref{prop:F-star-C}.
\end{proof}

$$
$$

\section{Examples of local del Pezzo surfaces}\label{sec:examples}

We specialize the blowup equation to the following three surfaces:
\[
S=\mathbb P^2,\qquad
\mathbb P^1\times\mathbb P^1,\qquad
\operatorname{Bl}_{p_1,\ldots,p_4}\mathbb P^2,
\]
where the four points in the last surface are in general position.
In each case, let $E\in|-K_S|$ be smooth.

For $\mathbb P^2$, let $H$ be the class of a line.  For
$\mathbb P^1\times\mathbb P^1$, let $B,F$ be the ruling classes,
so that $B^2=F^2=0$ and $B\cdot F=1$.  On the four-point blowup,
write $H$ for the pullback of a line and $e_i$ for the exceptional
classes, with $H^2=1$, $H\cdot e_i=0$, and
$e_i\cdot e_j=-\delta_{ij}$.
{In these curve bases, set $\mathbf r=(E\cdot\beta_1,\ldots,E\cdot\beta_b)$.
This is the vanishing $\mathbf r$-field used in Appendix~\ref{subsec:BPS-blowup-equation}.}
These data are
\begin{equation}\label{eq:examples-geometric-data}
\begin{array}{c|c|c|c|c}
S&E&E\cdot E&\chi(S)& {\mathbf r}\\ \hline
\mathbb P^2&3H&9&3&(3)\\
\mathbb P^1\times\mathbb P^1&2B+2F&8&4&(2,2)\\
\operatorname{Bl}_{4}\mathbb P^2&3H-e_1-e_2-e_3-e_4&5&7&(3,1,1,1,1)
\end{array}
\end{equation}
Its entries for the exceptional classes are positive, since
$E\cdot e_i=1$.

Set $Q_H=\mathbf Q^H$, $Q_B=\mathbf Q^B$,
$Q_F=\mathbf Q^F$, and $Q_i=\mathbf Q^{e_i}$ wherever applicable.
The coordinates adapted to the anticanonical translation are
\begin{equation}\label{eq:examples-adapted-coordinates}
\begin{array}{c|c|c|c}
S&Q&\text{mass coordinates}&D\\ \hline
\mathbb P^2&Q_H&\text{none}&3Q\partial_Q\\[2pt]
\mathbb P^1\times\mathbb P^1&Q_B&{m}=Q_F/Q_B&2Q\partial_Q\\[2pt]
\operatorname{Bl}_{4}\mathbb P^2&Q_1&
\begin{gathered}
{m_0}=Q_H/Q_1^3,\\[-2pt]
{m_i}=Q_i/Q_1\quad(i=2,3,4)
\end{gathered}&Q\partial_Q
\end{array}
\end{equation}
Indeed, in the original coordinates the last two operators are
$2Q_B\partial_{Q_B}+2Q_F\partial_{Q_F}$ and
$3Q_H\partial_{Q_H}+\sum_{i=1}^4Q_i\partial_{Q_i}$,
respectively.  They annihilate the mass coordinates in the table.

Throughout this section we keep the summation convention of
\eqref{eq:blowup-local-relative}.
The hatted potentials include the classical and genus-one terms fixed in
Section~\ref{subsec:normalization}.
In the $F^+$ forms below, the potentials are expressed in the adapted
coordinates of~\eqref{eq:examples-adapted-coordinates}, and all $Q$-derivatives
are taken at fixed masses. We divide out the common factors independent of $N$
arising from the classical and genus-one terms.

\subsection{Local $\mathbb P^2$}\label{subsec:example-P2}
 
With $Q=Q_H$, equation~\eqref{eq:blowup-local-relative} becomes
\begin{equation}\label{eq:examples-P2-full}
\begin{aligned}
0=\sum_{N\in\Z}(-1)^N\exp\Big[
&-3(N+\tfrac12)Q\partial_Q\widehat F^{\mathbb P^2/E}(Q;\hbar) +\widehat F^{K_{\mathbb P^2}}
   \big(e^{-3(N+\tfrac12)\hbar}Q;\mathrm i\hbar\big)\Big].
\end{aligned}
\end{equation}
Equivalently, in terms of the positive-degree potentials,
\begin{equation*}\label{eq:examples-P2-stable} \small
\begin{aligned}
0={}&\sum_{N\in\Z}(-1)^N Q^{\frac32(N+\tfrac12)^2}
\exp\Bigg[-\frac32\hbar(N+\tfrac12)^3+\frac38\hbar(N+\tfrac12)\\
&\quad+\sum_{g\geq0}(-1)^{g-1}\hbar^{2g-2}
 F_g^{K_{\mathbb P^2},+}\big(-e^{-3(N+\tfrac12)\hbar}Q\big)-3(N+\tfrac12)\sum_{g\geq0}\hbar^{2g-1}Q\partial_Q
 F_g^{\mathbb P^2/E,+}(-Q)\Bigg].
\end{aligned}
\end{equation*}

\subsection{Local $\mathbb P^1\times\mathbb P^1$}\label{subsec:example-P1P1}

In the ruling coordinates, the equation takes the form
\begin{equation}\label{eq:examples-P1P1-full}
\begin{aligned}
0=\sum_{N\in\Z}(-1)^N\exp\Big[
&-2(N+\tfrac12)(Q_B\partial_{Q_B}+Q_F\partial_{Q_F})
 \widehat F^{S/E}(Q_B,Q_F;\hbar)\\
&+\widehat F^{K_S}
 \big(e^{-2(N+\tfrac12)\hbar}Q_B,e^{-2(N+\tfrac12)\hbar}Q_F;\mathrm i\hbar\big)\Big].
\end{aligned}
\end{equation}
With $Q=Q_B$ and $m=Q_F/Q_B$, its $F^+$ form is
\begin{equation*}\label{eq:examples-P1P1-stable} {\small
\begin{aligned} 
& 0={}\sum_{N\in\Z}(-1)^N (Q^4m^2)^{\frac12(N+\tfrac12)^2}
\exp\Bigg[-\frac43\hbar(N+\tfrac12)^3+\frac13\hbar(N+\tfrac12)\\
&\qquad+\sum_{g\geq0}(-1)^{g-1}\hbar^{2g-2}
 F_g^{K_S,+}\big(e^{-2(N+\tfrac12)\hbar}Q;m\big) -2(N+\tfrac12)\sum_{g\geq0}\hbar^{2g-1}Q\partial_Q
 F_g^{S/E,+}(Q;m)\Bigg].
\end{aligned} }
\end{equation*}

\subsection{A non-toric example: a degree-five del Pezzo surface}\label{subsec:example-dP5}

For the four-point blowup, we obtain
\begin{equation}\label{eq:examples-dP5-full} 
\begin{aligned}
\!\!\!\! 0=\sum_{N\in\Z}(-1)^N\exp\Bigg[
&-(N+\tfrac12)\left(3Q_H\partial_{Q_H}+\sum_{i=1}^4Q_i\partial_{Q_i}\right)
 \widehat F^{S/E}(Q_H,Q_1,\ldots,Q_4;\hbar)\\
&+\widehat F^{K_S}
 \big(e^{-3(N+\tfrac12)\hbar}Q_H,e^{-(N+\tfrac12)\hbar}Q_1,\ldots,e^{-(N+\tfrac12)\hbar}Q_4;
      \mathrm i\hbar\big)\Bigg].
\end{aligned}
\end{equation}
Using $Q=Q_1$ and the mass coordinates in~\eqref{eq:examples-adapted-coordinates},
we obtain
\begin{equation*}\label{eq:examples-dP5-stable}  \small
\begin{aligned}
0={}&\sum_{N\in\Z}(-1)^N
\left(\frac{Q^5m_0^3}{m_2m_3m_4}\right)^{\frac12(N+\tfrac12)^2}
\exp\Bigg[-\frac56\hbar(N+\tfrac12)^3+\frac5{24}\hbar(N+\tfrac12)\\
&\qquad+\sum_{g\geq0}(-1)^{g-1}\hbar^{2g-2}
 F_g^{K_S,+}\big(-e^{-(N+\tfrac12)\hbar}Q;m_0,m_2,m_3,m_4\big)\\
&\qquad-(N+\tfrac12)\sum_{g\geq0}\hbar^{2g-1}Q\partial_Q
 F_g^{S/E,+}(-Q;m_0,m_2,m_3,m_4)\Bigg].
\end{aligned}
\end{equation*}

The calculation for the four-point blowup therefore uses the same
anticanonical translation and normalization without a toric presentation.

The displayed Gaussian factors are $(\mathbf Q^E)^{\frac12(N+\tfrac12)^2}$.
Combining them with the quadratic Taylor term of the local genus-zero series
gives $\mathbf q^{\frac12(N+\tfrac12)^2}$, where
\begin{equation}\label{eq:examples-common-nome}
\mathbf q=e^{\cT_{0,2}}
=\mathbf Q^E\exp\!\left(-D^2F_0^{K_S,+}\big((-1)^E\mathbf Q\big)\right).
\end{equation}
This is the Gaussian factor in the theta reformulation of
Section~\ref{subsec:geometric-NS-equation}.

$$
$$

\begin{appendices}

\section{Explicit formulas for the universal inverse coefficients}
\label{app:elliptic-stationary}

In~\S\ref{sec:inverse_LRC}, we constructed the universal inverse coefficients from the convolution inverse $\mathscr G$ of the normalized stationary GW family $\mathscr F$ of the elliptic curve $E$ and established their divisor equation and quasimodular weights. In this appendix, we give three representations of the associated generating series $G$.
 Recall that
\[
\begin{gathered}
F(z_1,\ldots,z_n)=\frac{\mathscr F(z_1,\ldots,z_n)}{z_1+\cdots+z_n},\qquad
G(z_1,\ldots,z_n)
=z_1\cdots z_n\,\mathscr G(z_1,\ldots,z_n),
\end{gathered}
\]
Since $\mathscr G=\mathscr F^{\star-1}$, we have $G=F^{\star-1}$.
In this {appendix}, we first express $G$ as a $\mathbf q$-bracket, then obtain two finite tree formulas: one directly from the theta recursion, and the other using the Bloch--Okounkov formula.

\subsection{Partition formulas for  stationary  generating  functions of the elliptic curve}
\label{app:induced-connected-products}
Let $E$ be a nonsingular complex elliptic curve. We denote by $\langle \omega\psi^{a_1} \dots \omega\psi^{a_n} \rangle_{g,n,d}^E$ the genus-$g$ degree-$d$ Gromov--Witten invariants with insertions $\{\omega\psi^{a_i}\}_{i=1}^{n}$, where $\omega$ is the point class and $\psi$ is the psi-class.
See~\cite{OP2006} and references therein for more details.

For $n\geq1$, the connected stationary series is given by
$$
	F(z_1, \dots, z_n) =\delta_{n,1}\frac{1}{z_1}+ \sum_{g \geq 1} \sum_{d \geq 0} \sum_{a_1, \dots, a_n \geq 0} \mathbf q^d\cdot \langle \omega\psi^{a_1} \dots \omega\psi^{a_n} \rangle_{g,n,d}^E \prod_{i=1}^n z_i^{a_i+1},
$$
Its disconnected version, as in Section~\ref{subsec:NS-local-relative-comparison}, is given by
$$
	F^\bullet(z_1, \dots, z_n) := \sum_{k>0} \sum_{\{I_1, \dots, I_k\} \in \mathcal P_n,\,  I_i \neq \emptyset} \prod_{j=1}^k F(z_{I_j}),
$$
where $\mathcal P_n$ is the set of unordered set partitions of $\{1,\cdots,n\}$.
We use the empty-input convention $F^\bullet(z_\emptyset):=1$.
Zagier's relation~\eqref{Zagier_recursive_relation} then determines $F^\bullet$ recursively.
Conversely, for $n\geq 1$, we have
\begin{equation}\label{eq:appq-connected-from-BO}
	F(z_1, \dots, z_n) = \sum_{\pi \in \mathcal{P}_n} (-1)^{|\pi|-1} (|\pi|-1)! \prod_{I \in \pi} F^\bullet(z_I).
\end{equation}

\begin{definition}
{For a family $f=(f_\lambda)_\lambda$ indexed by integer partitions, including the empty partition, its $\mathbf q$-bracket is the normalized sum}
\[
\langle f\rangle_{\mathbf q}
:=\frac{\sum_{\lambda}\mathbf q^{|\lambda|}f_\lambda}
{\sum_{\lambda}\mathbf q^{|\lambda|}}.
\]
Here $|\lambda|$ is the sum of the parts of $\lambda$.
The denominator has constant term $1$, so the quotient is defined formally.
\end{definition}

Dividing the trace formula of Okounkov--Pandharipande~\cite[Equation~(5.2)]{OP2006} by its zero-point series $\sum_\lambda\mathbf q^{|\lambda|}$ removes connected components without markings. With our convention $F^\bullet(z_\emptyset)=1$, this gives the normalized bracket formula (see also~\cite[Equation~(27)]{Zagier2016}):
\begin{equation}\label{eq:appq-moment-bracket}
	F^\bullet(z_1,\cdots,z_n)=\big\langle\mathcal E(z_1)\cdots\mathcal E(z_n)\big\rangle_{\mathbf q}.
\end{equation}
Here $\mathcal E(z)=(\mathcal E_{\lambda}(z))_{\lambda}$ is defined by
\beq\label{eqn:W}
 \mathcal E_\lambda(z):=\mathcal E_\emptyset(z)+\sum_{j=1}^{\ell(\lambda)}\big(e^{(\lambda_j-j+\frac12)z}-e^{(-j+\frac12)z}\big),\qquad
 \mathcal E_\emptyset(z)=\frac{1}{2\sinh(z/2)},
\eeq
and $(\mathcal E(z_1)\cdots\mathcal E(z_n))_{\lambda}=\mathcal E_{\lambda}(z_1)\cdots\mathcal E_{\lambda}(z_n)$.

To represent the connected series by a bracket, we use a product on functions on partitions for which the bracket is multiplicative.
In~\cite{vI2021}, van Ittersum introduced the induced product $\odot$ and the connected product $f_1|\cdots|f_n$ on functions on partitions.
For two functions $f$ and $g$ on partitions, their induced product $\odot$ is defined by
\begin{equation}\label{eq:appq-diamond}
 ( f\odot  g)_\lambda
 :=\sum_{\alpha\cup\beta\cup\gamma=\lambda}
 f_\alpha \cdot  g_\beta \cdot  \mu_\gamma,
\end{equation}
where the union is taken as multisets, and the M\"obius coefficients are
\[
 \mu_\lambda
 :=\begin{cases}
 (-1)^{\ell(\lambda)},&\text{if all parts of $\lambda$ are distinct},\\
 0,&\text{otherwise}.
 \end{cases}
\]
This product is commutative and associative, with the constant function $1$ as its unit.
Moreover, van Ittersum proved~\cite[Proposition~3.2.5]{vI2021} that
\begin{equation}\label{eq:appq-diamond-multiplicative}
 \langle f_1\odot\cdots\odot f_n\rangle_{\mathbf q}
 =\prod_{i=1}^n\langle f_i\rangle_{\mathbf q}.
\end{equation}
Define the connected product by~\cite[Definition~3.2.4]{vI2021}
\begin{equation}\label{eq:appq-connected-observable}
 f_1|\cdots|f_n:=\sum_{\pi\in\mathcal P_n}(-1)^{|\pi|-1}(|\pi|-1)!
 \bigodot_{I\in\pi}\bigg(\prod_{i\in I}f_i\bigg).
\end{equation}
In particular, $f|g=fg-f\odot g$. Multiplicativity under $\odot$ gives
$$
\<f_1|\cdots|f_n\>_{\mathbf q}=\sum_{\pi\in\mathcal P_n}(-1)^{|\pi|-1}(|\pi|-1)!
\prod_{I\in\pi}\bigg\<\prod_{i\in I}f_i\bigg\>_{\mathbf q}
$$
Applying this identity to $\mathcal E(z_i)$ and using~\eqref{eq:appq-moment-bracket}, we obtain
\begin{equation}\label{eq:appq-connected-observable-bracket}
 F(z_1,\ldots,z_n)
 =\big\langle\mathcal E(z_1)|\cdots|\mathcal E(z_n)\big\rangle_{\mathbf q}.
\end{equation}

\subsection{A $\mathbf q$-bracket representation of $G$}
\label{app:observable-convolution}

We lift the inverse relation $F\star G=\mathscr I$ to functions on partitions.
Let $\mathbb A$ be the commutative $\Q$-algebra of $\Q$-valued functions on partitions, with product $\odot$.
\begin{definition}
	Let $\mathcal A$ and $\mathcal B$ be families of symmetric formal power series in $z_1,\ldots,z_n$, for $n\geq1$, with coefficients in $\mathbb A$. We define
	\begin{equation}
		\label{eq:appq-star-q}
		(\mathcal A\circledstar\mathcal B)(z_1,\ldots,z_n)
		:=\sum_{\pi=\{I_1,\ldots,I_k\}\in\mathcal P_n}
		\mathcal A(|z_{I_1}|,\ldots,|z_{I_k}|)
		\odot\bigg(\bigodot_{i=1}^k\mathcal B(z_{I_i})\bigg).
	\end{equation}
\end{definition}
Since the induced product $\odot$ is commutative and associative, as for the ordinary convolution, one can prove that the convolution $\circledstar$ is associative with identity
$\mathscr I(z_1,\ldots,z_n)=\delta_{n,1}$, which is constant with respect to partitions $\lambda$.
Such a formal power-series family $\mathcal A$ is invertible with respect to $\circledstar$ if and only if its unary series $\mathcal A(z)$ is invertible under $\odot$ in the same power-series algebra, with coefficients given by functions on partitions.

By the multiplicativity~\eqref{eq:appq-diamond-multiplicative},
\begin{equation}
	\label{eq:appq-star-q-intertwining}
	\langle\mathcal A\circledstar\mathcal B\rangle_{\mathbf q}
	=\langle\mathcal A\rangle_{\mathbf q}
	\star
	\langle\mathcal B\rangle_{\mathbf q}.
\end{equation}
This identity reduces the construction of $G$ to inversion under $\circledstar$.
\begin{lemma}
	The function $\mathcal E(z)$ defined by~\eqref{eqn:W} is invertible under $\odot$ as a formal Laurent series with coefficients given by functions on partitions.
\end{lemma}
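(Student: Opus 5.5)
We have to show that $\mathcal E(z)$, a power series in $z$ (Laurent, with leading $1/z$) with coefficients in $\mathbb A$, has an inverse under the induced product $\odot$. The plan is to separate off the leading term, which is constant in $\lambda$, and invert by a geometric series.

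First expand $\mathcal E_\lambda(z)$. The factor $\mathcal E_\emptyset(z)=\frac{1}{2\sinh(z/2)}=z^{-1}\bigl(1-\frac{z^2}{24}+\cdots\bigr)$ does not depend on $\lambda$. Each summand $e^{(\lambda_j-j+\frac12)z}-e^{(-j+\frac12)z}$ vanishes at $z=0$, and is nonzero only for $j\le\ell(\lambda)$, so the sum is finite for each $\lambda$. Hence
\[
\mathcal E(z)=z^{-1}\bigl(\mathbf 1+z\,\mathcal H(z)\bigr),\qquad \mathcal H(z)\in\mathbb A[[z]],
\]
where $\mathbf 1$ is the constant function $1$ on partitions. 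By the remark after \eqref{eq:appq-diamond}, $\mathbf 1$ is the unit for $\odot$. This uses $\mu_\emptyset=1$ and $\mu_\gamma$ for $\gamma\neq\emptyset$ in the sum over decompositions; equivalently, $\odot$-multiplication by a function constant in $\lambda$ is ordinary scalar multiplication.

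Then define the inverse by the formal Neumann series
\[
\mathcal E(z)^{\odot-1}:=z\sum_{m\ge0}(-1)^m z^m\,\mathcal H(z)^{\odot m},
\]
with $\odot$ extended $z$-linearly to $\mathbb A((z))$. This is well defined because the $m$-th term has $z$-order at least $m+1$, so each coefficient of $z^k$ is a finite $\odot$-expression in the coefficients of $\mathcal H$. The verification $\mathcal E\odot\mathcal E^{\odot-1}=\mathbf 1$ is the usual telescoping argument. It uses only commutativity, associativity and the unit property of $\odot$, all stated after \eqref{eq:appq-diamond}, together with the fact that $z^{\pm1}$ are scalars.

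The main point to check is that $\odot$ is well defined on $\mathbb A$, so that the power-series ring $\mathbb A((z))$ with $\odot$ is a genuine commutative ring with unit $\mathbf 1$. For each fixed $\lambda$ the sum in \eqref{eq:appq-diamond} runs over finitely many multiset decompositions, so it is finite. Associativity is quoted from \cite{vI2021}. Once this is granted, the only other nontrivial input is that the leading coefficient is the scalar $\mathbf 1$, which holds because $\mathcal E_\lambda(z)-\mathcal E_\emptyset(z)$ is regular at $z=0$.
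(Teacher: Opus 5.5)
Your proposal is correct and is essentially the paper's proof: write $z\mathcal E(z)$ as the $\odot$-unit $\mathbf 1$ plus terms of positive $z$-order, invert that factor by the formal geometric series, and multiply back by $z$. The only difference is that the paper also records that the $z^1$-coefficient of $z\mathcal E(z)$ vanishes, which gives the sharper $\mathcal E(z)^{\odot-1}=z+O(z^3)$; your weaker form $\mathbf 1+O(z)$ already suffices for invertibility.
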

\begin{proof}
	Since $z\mathcal E(z)=1+O(z^2)$ and the constant function $1$ is the unit of $\odot$, the formal geometric series gives $(z\mathcal E(z))^{\odot-1}=1+O(z^2)$. Thus $\mathcal E(z)^{\odot-1}=z(z\mathcal E(z))^{\odot-1}=z+O(z^3)$.
\end{proof}

Now we define 
$$
\mathcal F(z_1,\cdots,z_n)=\mathcal E(z_1)|\cdots |\mathcal E(z_n).
$$
Then $\mathcal F$ is $\circledstar$-invertible since $\mathcal E(z_1)$ is $\odot$-invertible.
Above discussions gives us:
\begin{proposition}\label{thm:appq-inverse-bracket}
Let $\mathcal G$ be defined by $\mathcal G(z)=\mathcal E(z)^{\odot-1}$, and, for $n\geq2$, 
\begin{equation}\label{eq:appq-observable-G-recursion-higher}
	\mathcal G(z_1,\ldots,z_n)=-\mathcal E(z_1+\cdots+z_n)^{\odot-1}\odot  \!\!\!\!\!\!\!\!\!
	\sum_{\substack{\pi=\{I_1,\ldots,I_k\}\in\mathcal P_n\\k\geq2}}  \!\!\!\!\!\!\!
	\mathcal F(|z_{I_1}|,\ldots,|z_{I_k}|)
	\odot\bigg(\bigodot_{i=1}^k\mathcal G(z_{I_i})\bigg).
\end{equation}
Then $\mathcal G=\mathcal F^{\circledstar-1}$ and
$$
G(z_1,\cdots,z_n)=\<\mathcal G(z_1,\cdots,z_n)\>_{\mathbf q}.
$$
\end{proposition}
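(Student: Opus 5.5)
The plan is to show three things in turn: the recursion \eqref{eq:appq-observable-G-recursion-higher} defines the $\circledstar$-inverse of $\mathcal F$, the $\mathbf q$-bracket sends this inverse to the ordinary $\star$-inverse of $F$, and that ordinary inverse is $G$.

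\textbf{Step 1: the recursion solves $\mathcal F\circledstar\mathcal G=\mathscr I$.} I first check that $\mathcal F(z)=\mathcal E(z)$ for $n=1$. This holds because the connected product of a single function is the function itself, by \eqref{eq:appq-connected-observable}. Next I write out $(\mathcal F\circledstar\mathcal G)(z_1,\ldots,z_n)=\delta_{n,1}$ using \eqref{eq:appq-star-q}. The one-block partition contributes $\mathcal F(z_1+\cdots+z_n)\odot\mathcal G(z_1,\ldots,z_n)=\mathcal E(|z|)\odot\mathcal G(z_{[n]})$. Every other partition has $k\geq2$ blocks and involves only $\mathcal G$ with fewer inputs. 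Solving for $\mathcal G(z_{[n]})$ by $\odot$-multiplying with $\mathcal E(|z|)^{\odot-1}$ gives exactly \eqref{eq:appq-observable-G-recursion-higher}. The inverse $\mathcal E(|z|)^{\odot-1}$ exists by the preceding Lemma, substituting $z\mapsto z_1+\cdots+z_n$. For $n=1$ the equation reads $\mathcal E(z)\odot\mathcal G(z)=1$.

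So $\mathcal G$ is a right $\circledstar$-inverse of $\mathcal F$. By the associativity of $\circledstar$ asserted above, and the existence of a left inverse (constructed as in Proposition~\ref{prop:convolution-basic-properties}, using commutativity of $\odot$), this right inverse is the unique two-sided inverse $\mathcal F^{\circledstar-1}$. One formal point needs care here: the coefficients are Laurent in $z$, since $\mathcal E(z)\sim 1/z$. I will work with the rescaled families $z\mathcal E(z)$, or invoke the Remark allowing partial sums of variables as denominators. This keeps the recursion well defined in each fixed total $z$-degree.

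\textbf{Step 2: pass to $\mathbf q$-brackets.} Apply $\langle\cdot\rangle_{\mathbf q}$ to $\mathcal F\circledstar\mathcal G=\mathscr I$. The intertwining identity \eqref{eq:appq-star-q-intertwining}, which rests on \eqref{eq:appq-diamond-multiplicative}, gives $\langle\mathcal F\rangle_{\mathbf q}\star\langle\mathcal G\rangle_{\mathbf q}=\langle\mathscr I\rangle_{\mathbf q}=\mathscr I$. The bracket of a constant function is that constant, since $\langle 1\rangle_{\mathbf q}=1$. By \eqref{eq:appq-connected-observable-bracket}, $\langle\mathcal F\rangle_{\mathbf q}=F$. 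Hence $F\star\langle\mathcal G\rangle_{\mathbf q}=\mathscr I$.

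\textbf{Step 3: identify with $G$.} The text records $G=F^{\star-1}$. This follows from $\mathscr G=\mathscr F^{\star-1}$, because rescaling by the factors $(z_1+\cdots+z_n)$ and $z_1\cdots z_n$ intertwines the two convolutions, exactly as in the proof of Proposition~\ref{prop:F-star-C}. Two-sided inverses are unique by associativity, so $\langle\mathcal G\rangle_{\mathbf q}=G$.

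The main obstacle is the formal justification of the intertwining identity \eqref{eq:appq-star-q-intertwining} coefficientwise. The bracket must commute with the infinite $z$-expansions and with substitution of block sums. I would handle this by noting that each fixed multidegree coefficient involves finitely many $\odot$-products, to which \eqref{eq:appq-diamond-multiplicative} applies directly. The Laurent poles are cleared by the rescaling of Step 1.
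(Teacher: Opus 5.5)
Your proposal is correct and follows the same route as the paper. The paper simply states that the proposition follows from the preceding discussion: the recursion is the one-block separation of $\mathcal F\circledstar\mathcal G=\mathscr I$, using $\mathcal F(z)=\mathcal E(z)$ and the $\odot$-invertibility of $\mathcal E$; the intertwining identity \eqref{eq:appq-star-q-intertwining} and \eqref{eq:appq-connected-observable-bracket} then give $F\star\langle\mathcal G\rangle_{\mathbf q}=\mathscr I$; and uniqueness of the inverse yields $\langle\mathcal G\rangle_{\mathbf q}=G$. Your treatment of the Laurent pole of $\mathcal E$ and of the coefficientwise bracket supplies details the paper leaves implicit.
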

For example, $\left\langle\mathcal E(z)^{\odot-1}\right\rangle_{\mathbf q}=\Theta(z)$.
For $n=2$, we have
$$
\mathcal G(z_1,z_2)=-\mathcal G(z_1+z_2)\odot\mathcal F(z_1,z_2)\odot\bigg(\bigodot_{i=1}^2\mathcal G(z_i)\bigg),
$$
and 
$$ 
\<\mathcal G(z_1,z_2)\>_{\mathbf q}=-\Theta(z_1+z_2)F(z_1,z_2)\Theta(z_1)\Theta(z_2)
=G(z_1,z_2).
$$

\subsection{A theta-function formula for the inverse coefficients}
\label{app:inverse-elliptic-kernel}
{A second representation follows directly from the theta recursion~\eqref{eqn:recursion-C}. Since $G=V$ by Proposition~\ref{prop:F-star-C}, expanding this recursion gives a finite sum over rooted trees.}

As in the rooted-tree inversion of Guo--Wu~\cite[Corollary~2.2]{GuoWu2025}, the trees record repeated substitution. Their expansion concerns the local-relative correspondence for fixed curve classes, whereas here we expand the generating series $G$ for the universal inverse coefficients, with vertex weights given by derivatives of $\Theta$.

\begin{definition}
For a nonempty set $I\subseteq[n]$, let $\mathfrak T_\Theta(I)$ be the set of isomorphism classes of finite
non-plane rooted trees $T$ together with subsets
$\{J_v\}_{v\in V(T)}$ such that
\begin{enumerate}
\item the subsets $J_v$ are pairwise disjoint and
      $\bigsqcup_{v\in V(T)}J_v=I$;
\item if $J_v=\emptyset$, then the number $d(v)$ of children of $v$
      satisfies $d(v)\geq2$.
\end{enumerate}
Isomorphisms preserve the root and every label in $I$. We use $|z_\emptyset|=0$.
The second condition in particular excludes an empty-labelled leaf and
an empty-labelled unary vertex.  It also implies that every rooted
subtree carries at least one label from $I$.
\end{definition}

\begin{proposition}
\label{thm:appq-theta-tree}
For every $n\geq1$, the convolution inverse $G$ satisfies
\begin{equation}
\label{eq:appq-G-tree-closed}
 G(z_1,\ldots,z_n)
 =\sum_{(T,\{J_v\})\in\mathfrak T_\Theta([n])}
 { \prod_{v\in V(T)}
  (-1)^{d(v)}\Theta^{(d(v))}(|z_{J_v}|)}.
\end{equation}
The sum is finite for fixed $n$.
\end{proposition}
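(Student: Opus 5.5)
We take as starting point the recursion \eqref{eqn:recursion-C} for $V$, together with the identification $G=V$ from Proposition~\ref{prop:F-star-C}. Recall that $V=z_1\cdots z_n\mathscr V$ and $G=z_1\cdots z_n\mathscr G$, with $\mathscr G=\mathscr V$. The plan is to iterate the recursion and record each substitution as a rooted tree.

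Rewrite \eqref{eqn:recursion-C} as
\[
V(z_I)=\sum_{k\geq0}\sum_{\substack{I_1\sqcup\cdots\sqcup I_k\sqcup J=I\\ I_1,\ldots,I_k\neq\emptyset,\ I_i\neq I}}(-1)^k\Theta^{(k)}(|z_J|)\prod_{i=1}^kV(z_{I_i}).
\]
For $k=0$ the only term is $J=I$, giving $\Theta(|z_I|)$. For $k=1$ one needs $J\neq\emptyset$, so $I_1\neq I$. For $k\geq2$ every block is automatically proper. The sign $(-1)^k$ with $k=0$ is consistent with the vertex weight $(-1)^{d(v)}\Theta^{(d(v))}$.

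We then prove \eqref{eq:appq-G-tree-closed} by strong induction on $|I|$, with $[n]$ replaced by an arbitrary nonempty $I$. A term of the recursion is the root vertex $v$, with $J_v=J$ and $d(v)=k$ children, and the blocks $I_1,\ldots,I_k$ feed the child subtrees. By induction each $V(z_{I_i})$ is the sum over $\mathfrak T_\Theta(I_i)$. Grafting the $k$ child trees onto the root gives a tree whose labels partition $I$, and its weight is the product of the vertex weights.

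The bijectivity of this grafting map is the main point to check. Conversely, given $(T,\{J_v\})\in\mathfrak T_\Theta(I)$, delete the root. Each child subtree carries a nonempty label set $I_i$ (as noted after the definition), and these sets together with $J_{\mathrm{root}}$ partition $I$. The condition ``$J_v=\emptyset\Rightarrow d(v)\geq2$'' at the root corresponds exactly to the excluded cases of the recursion. If $k=0$, we need $J=I\neq\emptyset$. If $k=1$, we need $J\neq\emptyset$. If $J=\emptyset$, we need $k\geq2$, which is automatic since $k\leq1$ with $J=\emptyset$ would force $I_1=I$ or $I=\emptyset$. Condition (2) at the non-root vertices is inherited from the subtrees.

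Isomorphism classes match as well. The blocks $I_i$ are unordered and distinct, so the children of the root are distinguished by their label sets, and there are no automorphisms. Hence the unordered block sum corresponds exactly to isomorphism classes of non-plane trees, and the identity follows.

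Finiteness is the remaining point. Every vertex with $J_v=\emptyset$ has at least two children, and every leaf is labelled by a nonempty set. So a tree has at most $n$ labelled vertices and at most $n-1$ unlabelled branching vertices, which bounds the number of vertices and hence the number of trees.
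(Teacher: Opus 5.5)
Your proposal is correct and follows essentially the same argument as the paper. The root decomposition of a tree in $\mathfrak T_\Theta(I)$ reproduces the recursion \eqref{eqn:recursion-C} term by term, and the admissibility condition at the root matches the excluded case $k=1$, $J=\emptyset$. Induction on $|I|$ together with $G=V$ then gives the identity, and finiteness follows from bounding the empty-labelled vertices by the number of leaves minus one; your explicit check that the root's children are distinguished by their disjoint nonempty label sets is a detail the paper leaves implicit.
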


Since $\sum_v d(v)=|E(T)|$, the total sign is also
$(-1)^{|E(T)|}$.

\begin{proof}
Decompose a tree in $\mathfrak T_\Theta([n])$ at its root.  If the root
has no children, it must carry all labels in $[n]$ and contributes
$\Theta(z_1+\cdots+z_n)$.  If it has $k\geq1$ children, let $J$ be the label set of
the root and let $I_1,\ldots,I_k$ be the sets of labels in the child
subtrees.  The root contributes $(-1)^k\Theta^{(k)}(|z_J|)$, while the
child subtrees contribute $\prod_{i=1}^kG(z_{I_i})$.  The admissibility
condition is exactly $J\neq\emptyset$ when $k=1$; for $k\geq2$ the root
may be empty-labelled.  Thus the root decomposition of the right-hand
side of~\eqref{eq:appq-G-tree-closed} is precisely
\eqref{eqn:recursion-C}.  Since the latter determines $G(z_1,\ldots,z_n)$
uniquely by induction on $n$, the formula follows.

For finiteness, observe that every empty-labelled vertex has at least
two children.  Hence the number of such vertices is bounded by the
number of labelled leaves minus one, while there are at most $n$
nonempty vertex labels.
\end{proof}

The first two cases of the tree formula are
\begin{equation}
\label{eq:appq-G1}
 G(z)=\Theta(z),
\end{equation}
and
\begin{equation}
\label{eq:appq-G2}
 G(z_1,z_2)
 =\Theta(z_1+z_2)
 -\Theta'(z_1)\Theta(z_2)
 -\Theta'(z_2)\Theta(z_1).
\end{equation}
By the definition~\eqref{eqn:G_series}, we extract the inverse coefficients from the tree formula~\eqref{eq:appq-G-tree-closed} as follows:
\begin{equation}
\label{eq:appq-G-coefficient-extraction}
 G^E_{g,\ba}
 =\left[z_1^{a_1+2}\cdots z_n^{a_n+2}\right]
 \frac{G(z_1,\ldots,z_n)-\delta_{n,1}z_1}
 {z_1+\cdots+z_n},
 \qquad |\ba|=2g-2.
\end{equation}

\subsection{An expansion using the Bloch--Okounkov formula}
The Bloch--Okounkov formula~\cite[Theorem~6.1]{BO2000}, together with the normalized bracket identity~\eqref{eq:appq-moment-bracket}, gives an explicit expression for $F^\bullet$. We use it below to expand the series $G$.
\begin{theorem}
	\label{thm_ellipticGW}
	For every $n \geq 1$, we have
	$$
	{F^\bullet(z_1,\cdots,z_n)}  =  \sum_{\sigma\in S_n}
	\frac{\det \Big[\frac{\Theta^{(j-i+1)}(z_{\sigma(1)}+\cdots+z_{\sigma(n-j)})}{(j-i+1)!}\Big]_{i,j=1}^n}{\Theta(z_{\sigma(1)})\Theta(z_{\sigma(1)}+z_{\sigma(2)})\cdots \Theta(z_{\sigma(1)}+\cdots+z_{\sigma(n)})}  
	$$
	Here $\Theta(z)=\Theta(z\mid\tau)$ is the same normalized odd theta function as in~\eqref{eq:delPezzo-Theta-expansion}; in particular, $\Theta'(0)=1$.
	The symbol $\Theta^{(k)}$ denotes the $k$-th derivative with respect to $z$, $\frac{\Theta^{(j-i+1)}}{(j-i+1)!}$ is interpreted as $0$ if $j-i+1<0$, and $z_{\sigma(1)}+\cdots+z_{\sigma(n-j)}=0$ for $n=j$.
\end{theorem}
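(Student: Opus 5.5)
The plan is to combine the normalized bracket identity \eqref{eq:appq-moment-bracket}, $F^\bullet(z_1,\ldots,z_n)=\langle\mathcal E(z_1)\cdots\mathcal E(z_n)\rangle_{\mathbf q}$, with the Bloch--Okounkov evaluation of the $n$-point function \cite[Theorem~6.1]{BO2000}. The only work is to match normalizations.

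First I would recall the Bloch--Okounkov theorem in its original form. Their $n$-point function is
\[
\Big\langle \prod_{i=1}^n\sum_{k\in\Z+\frac12}t_i^{k}\,\mathbf 1_{k\in S(\lambda)}\Big\rangle,
\qquad S(\lambda)=\{\lambda_j-j+\tfrac12\}_{j\geq1}.
\]
Writing $t_i=e^{z_i}$ and regularizing the infinite sum gives exactly $\mathcal E_\lambda(z_i)$ as in \eqref{eqn:W}: the vacuum part $\sum_{j\geq1}e^{(-j+\frac12)z}$ sums to $1/(2\sinh(z/2))$, and the remaining finite sum records the shift by $\lambda$. Bloch--Okounkov express this $\mathbf q$-bracket as
\[
\sum_{\sigma\in S_n}\frac{\det\big[\theta^{(j-i+1)}(t_{\sigma(1)}\cdots t_{\sigma(n-j)})/(j-i+1)!\big]}{\theta(t_{\sigma(1)})\cdots\theta(t_{\sigma(1)}\cdots t_{\sigma(n)})}.
\]
Here $\theta$ is the Jacobi theta function normalized by $\theta'(1)=1$, and derivatives are taken with respect to $\log t$.

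Second, I would identify their $\theta$ with $\Theta(z)$ of \eqref{eq:delPezzo-Theta-expansion}. Both functions are odd in $z$, have a simple zero at $0$ with derivative $1$, and satisfy the same product expansion $\Theta(z)=2\sinh(z/2)\prod_{m\ge1}(1-\mathbf q^me^z)(1-\mathbf q^me^{-z})/(1-\mathbf q^m)^2$. The logarithm of this product gives the Eisenstein exponent $\sum_k\frac{B_{2k}}{2k(2k)!}E_{2k}z^{2k}$, which follows from the standard expansion of $\log(\Theta(z)/z)$. Under $t=e^z$, products of the $t_i$ become sums of the $z_i$, which converts the formula into the stated one. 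The convention $z_{\sigma(1)}+\cdots+z_{\sigma(0)}=0$ corresponds to the entries $\theta^{(k)}(1)$, namely $\Theta^{(k)}(0)$.

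Third, as a consistency check, I would verify the case $n=1$: the formula gives $\Theta'(0)/\Theta(z_1)=1/\Theta(z_1)$, which agrees with $F^\bullet(z_1)\Theta(z_1)=1$ from \eqref{Zagier_recursive_relation}. Alternatively, since Zagier's recursion \eqref{Zagier_recursive_relation} determines $F^\bullet$ uniquely, one could prove the formula by checking that the right-hand side satisfies the recursion. This is Zagier's own equivalence argument \cite{Zagier2016}, and I would cite it rather than redo it.

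The main obstacle is bookkeeping of conventions: the regularization of $\mathcal E_\emptyset$ (the $\frac12$-shift and sign), the normalization of $\theta$, and the direction of the determinant indices under $\sigma$. I would fix these by comparing the $n=1$ and $n=2$ cases with the known expansion $F(z)=1/z+\cdots$.
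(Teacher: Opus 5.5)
Your proposal is correct and takes the same route as the paper, which obtains the formula by combining the normalized bracket identity \eqref{eq:appq-moment-bracket} with \cite[Theorem~6.1]{BO2000}. Your normalization checks are exactly the bookkeeping the paper leaves implicit: the vacuum sum giving $1/(2\sinh(z/2))$, the identification $\Theta=\eta^{-3}\theta$ with derivatives taken in $\log t$, and the $n=1$ case.
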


Since $F(z)=1/\Theta(z)$, separating the one-block partition in $F\star G=\mathscr I$ gives, for $n\geq2$,
\begin{equation}
\label{eq:appq-G-connected-recursion}
 G(z_1,\ldots,z_n)
 =-\Theta(z_1+\cdots+z_n)
 \sum_{\substack{\{I_1,\ldots,I_k\}\in\mathcal P_n\\k\geq2}}
 F(|z_{I_1}|,\ldots,|z_{I_k}|)
 \prod_{i=1}^kG(z_{I_i}).
\end{equation}
The arguments indexed by the blocks of a set partition
may be listed in any order because the series are symmetric.

For a nonempty set $I\subseteq[n]$, let $\mathfrak T_{\geq2}(I)$ be the set of isomorphism classes of finite non-plane rooted trees whose
leaves are bijectively labelled by $I$ and whose internal vertices have
at least two children; isomorphisms preserve the root and all leaf labels. If $v$ is a vertex, let $I_v\subseteq I$ be the
set of leaf labels below $v$.  
For each internal vertex $v$, denote its children by $v_1,\ldots,v_{d(v)}$.
\begin{proposition}
\label{thm:appq-connected-tree}
For every $n\geq1$,
\begin{equation}
\label{eq:appq-G-connected-tree}
\begin{aligned}
 G(z_1,\ldots,z_n)
 ={}&\sum_{T\in\mathfrak T_{\geq2}([n])}
 \prod_{v\in V^\circ(T)}
 {\left[-\Theta(|z_{I_v}|)
 F(|z_{I_{v_1}}|,\ldots,|z_{I_{v_{d(v)}}}|)\right]}  
\cdot \prod_{i\in L(T)}\Theta(z_i) .
\end{aligned}
\end{equation}
Here $L(T)$ is identified with the leaf-label set $[n]$, and $V^\circ(T)$ denotes the set of internal vertices.
\end{proposition}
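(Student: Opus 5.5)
The plan is to prove~\eqref{eq:appq-G-connected-tree} by showing that the right-hand side satisfies the same recursion as $G$, and then invoke uniqueness, exactly as in the proof of Proposition~\ref{thm:appq-theta-tree}. Denote the right-hand side by $\widetilde G(z_1,\ldots,z_n)$, defined for every nonempty index set $I$ through trees in $\mathfrak T_{\geq2}(I)$.

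First consider $n=1$. The only tree in $\mathfrak T_{\geq2}(\{1\})$ is the single root leaf. It has no internal vertices, so $\widetilde G(z_1)=\Theta(z_1)$. This agrees with $G(z)=\Theta(z)$ from~\eqref{eq:appq-G1}, or equivalently from $G(z)=F(z)^{-1}$ and $F(z)=1/\Theta(z)$.

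For $n\geq2$, every tree in $\mathfrak T_{\geq2}([n])$ has an internal root $r$ with $k=d(r)\geq2$ children. Deleting $r$ yields an unordered collection of $k$ rooted subtrees. Their leaf sets $I_1,\ldots,I_k$ form a set partition of $[n]$ with $k\geq2$ blocks, and each subtree lies in $\mathfrak T_{\geq2}(I_i)$. Conversely, such data glue uniquely to a tree. The correspondence is bijective on isomorphism classes, because isomorphisms fix all leaf labels and so distinct blocks cannot be permuted. The weight is multiplicative under this decomposition. The root contributes $-\Theta(|z_{I_r}|)F(|z_{I_1}|,\ldots,|z_{I_k}|)$ with $I_r=[n]$, and the remaining internal vertices and leaves distribute over the subtrees, with $I_v$ computed inside the subtree. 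The ordering of children in $F$ is irrelevant by symmetry of $F$. Summing gives
\[
\widetilde G(z_1,\ldots,z_n)=-\Theta(z_1+\cdots+z_n)\sum_{\substack{\{I_1,\ldots,I_k\}\in\mathcal P_n\\k\geq2}}F(|z_{I_1}|,\ldots,|z_{I_k}|)\prod_{i=1}^k\widetilde G(z_{I_i}).
\]
This is precisely~\eqref{eq:appq-G-connected-recursion}. In that recursion, every block on the right is a proper subset of $[n]$, so induction on $n$ gives $\widetilde G=G$.

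It remains to justify~\eqref{eq:appq-G-connected-recursion} itself; this is the only step requiring care. We use $G=F^{\star-1}$, which follows from $\mathscr G=\mathscr F^{\star-1}$ after rescaling, as noted at the start of the appendix. We take the right-inverse form $F\star G=\mathscr I$, available by Proposition~\ref{prop:convolution-basic-properties}. In this form the one-block partition contributes $F(z_1+\cdots+z_n)G(z_1,\ldots,z_n)$. Since $F(z)=1/\Theta(z)$, multiplying by $-\Theta(z_1+\cdots+z_n)$ gives the recursion. Here $F(z)=1/\Theta(z)$ is the $n=1$ case of Zagier's relation~\eqref{Zagier_recursive_relation}. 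The factors $1/\Theta$ at block sums lie in the allowed class of partial-sum denominators noted in the remark after Proposition~\ref{prop:convolution-basic-properties}. Finally, each tree has at most $n-1$ internal vertices, so the sum is finite for fixed $n$.
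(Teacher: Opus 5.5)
Your proposal is correct and follows essentially the same route as the paper. You decompose each tree at its root to reproduce the recursion~\eqref{eq:appq-G-connected-recursion}, obtained by separating the one-block partition in $F\star G=\mathscr I$ with $F(z)=1/\Theta(z)$, and then conclude by induction from $G(z)=\Theta(z)$. Your remarks on bijectivity and finiteness only make explicit what the paper leaves implicit.
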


\begin{proof}
For $n=1$, the right-hand side is the single leaf contribution
\eqref{eq:appq-G1}.  For $n\geq2$, decomposing a tree at its root
amounts to choosing a set partition $\{I_1,\ldots,I_k\}\in\mathcal P_n$ with $k\geq2$,
assigning one child subtree to each block, and multiplying by
the root factor $-\Theta(z_1+\cdots+z_n)F(|z_{I_1}|,\ldots,|z_{I_k}|)$.  This is
exactly the recursion~\eqref{eq:appq-G-connected-recursion}.
\end{proof}

For each internal vertex $v$, the M\"obius inversion formula~\eqref{eq:appq-connected-from-BO} expands its $F$-factor as
\begin{equation}
\label{eq:appq-BO-vertex-weight}
\begin{aligned}
F(|z_{I_{v_1}}|,\ldots,|z_{I_{v_{d(v)}}}|)
 ={}&
 \sum_{\pi\in\mathcal P_{d(v)}}
 (-1)^{|\pi|-1}(|\pi|-1)!  \cdot 
 \prod_{J\in\pi}
 F^\bullet\bigl((|z_{I_{v_j}}|)_{j\in J}\bigr).
\end{aligned}
\end{equation}
With this substitution, the finite tree sum~\eqref{eq:appq-G-connected-tree} expresses $G$ in terms of $F^\bullet$. Each factor $F^\bullet$ is given by Theorem~\ref{thm_ellipticGW}, so this is a non-recursive formula in Bloch--Okounkov determinants.

\begin{remark}
Equations~\eqref{eq:appq-G-tree-closed} and
\eqref{eq:appq-G-connected-tree} express the substitution inverse
as rooted-tree sums rather than as a single $n\times n$ determinant
analogous to the original Bloch--Okounkov determinant.  Further
compression would require additional theta identities, such as
systematic cancellations arising from Fay identities for the
Kronecker function.

\end{remark}

$$
$$

\section{Refined BPS theory and the NS limit}

\label{app:refined-BPS}

We review the refined BPS free energy and its conjectural identifications with the local and relative GW series in the topological and NS limits, respectively. Taking the NS limit of the conjectural refined vanishing blowup equation then leads to the equation~\eqref{eq:blowup-local-relative}.

\subsection{Refined BPS free energy and its topological and Nekrasov--Shatashvili limits} 
In this subsection, we recall the refined BPS invariants.
The topological and Nekrasov--Shatashvili (NS) limits of the refined BPS free energy are conjecturally identified with the local and relative GW generating series, respectively.

Refined BPS invariants record the multiplicities of BPS particles with prescribed curve classes and spins.
In the M-theory interpretation of Gopakumar and Vafa~\cite{GV1998a,GV1998b}, these particles arise from M2-branes wrapped on holomorphic curves in a local Calabi--Yau threefold $X$.
Their spins are labelled by representations of the five-dimensional massive little group $SU(2)_L\times SU(2)_R$.
In this paper, we take $X=\operatorname{Tot}(K_S)$.
For a nonzero effective curve class $\beta$ and $j_L,j_R\in\frac12\mathbb Z_{\geq0}$, we denote by
$$N_\beta^{j_L,j_R}$$
the corresponding refined BPS invariants.
Related mathematical approaches include refined stable pair invariants~\cite{CKK2012} and Maulik--Toda's vanishing cycle construction of Gopakumar--Vafa invariants~\cite{MT2016}.

The refined BPS free energy consists of the perturbative part (corresponding to the classical and unstable parts in the local and relative GW theories) and the instanton part.
We first consider the instanton part.
Let $t$ denote the K\"ahler parameters, with $\beta\cdot t$ their pairing with the curve class $\beta$. We adopt the conventions of~\cite{HSW2018} and write the BPS instanton free energy as
\begin{equation}\label{eq:delPezzo-refined-instanton-part}
	F^{\rm ins}(t;\epsilon_1,\epsilon_2)
	=\sum_{j_L,j_R\geq0}\sum_{\beta}\sum_{k=1}^{\infty}
	\frac{(-1)^{2j_L+2j_R}N_\beta^{j_L,j_R}\, \chi_{j_L}(\cq_L^k)\chi_{j_R}(\cq_R^k)}
	{k(\cq_1^{k/2}-\cq_1^{-k/2})(\cq_2^{k/2}-\cq_2^{-k/2})}
	e^{-k\beta\cdot t},
\end{equation}
where the summation over $\beta$ runs over nonzero effective curve classes and
the summation over $j_L,j_R$ runs over nonnegative half-integers. 
Here $\cq_{1,2}=e^{\epsilon_{1,2}}$, $\cq_{L,R}=e^{(\epsilon_1\mp\epsilon_2)/2}$, and
\[
\chi_j(\cq)=\frac{\cq^{2j+1}-\cq^{-2j-1}}{\cq-\cq^{-1}}.
\]
Furthermore, the instanton free energy has the following structure
$$
F^{\rm ins}(t;\epsilon_1,\epsilon_2)=\sum_{h,g\geq 0} (\epsilon_1+\epsilon_2)^{2h} (\epsilon_1\epsilon_2)^{g-1}F_{\mathrm{ref}}^{(h,g)}(t).
$$

The topological limit of the BPS instanton free energy is defined by taking $\epsilon_1=-\epsilon_2$:
$$
	F^{\mathrm{top},+}(t;\hbar):=F^{\rm ins}(t;-\hbar,\hbar).
$$
This indeed gives the Gopakumar--Vafa (GV) free energy~\cite{GV1998a,GV1998b}.
For local del Pezzo surfaces, the Gromov--Witten/Gopakumar--Vafa correspondence~\cite{GV1998a,GV1998b} predicts the following identification:
\begin{equation}\label{eq:BPS-local-limit-recall}
	F^{\mathrm{top},+}(t;\hbar)
	=\sum_{g\geq0}(-\hbar^2)^{g-1} \sum_{\beta,\beta\cdot E>0}N_{g,\beta}^{K_S}\mathbf Q^\beta
	=\sum_{g\geq 0} (-\hbar^2)^{g-1} F_g^{K_S,+}({\bf Q}),
\end{equation}
where $\mathbf Q^\beta=e^{-\beta\cdot t}$.
Using their vanishing-cycle definition of GV invariants~\cite[Conjecture~3.18]{MT2016},
Maulik--Toda formulated a sheaf-theoretic version of this comparison for projective Calabi--Yau threefolds.
For local del Pezzo surfaces, this sheaf-theoretic correspondence is known in genus zero~\cite[Section~8.6]{Bousseau2020QTV}, while the all-genus correspondence remains conjectural in general.

The NS limit~\cite{NekrasovShatashvili2009} of the BPS instanton free energy is defined by taking the residue at $\epsilon_1=0$; equivalently,
$$
	F^{\mathrm{NS},+}(t;\hbar):=\lim_{\epsilon_1\to0}\epsilon_1\,  F^{\rm ins}(t;\epsilon_1,\hbar).
$$
It is conjecturally related to the relative GW invariants as follows:
\begin{equation}\label{eq:BPS-relative-limit-recall}
	F^{\mathrm{NS},+}(t;\hbar)
	=\sum_{g\geq0} \hbar^{2g-1} \sum_{\beta,\beta\cdot E>0}\frac{(-1)^{\beta\cdot E+g-1}}{\beta\cdot E} N_{g,\beta}^{S/E}\mathbf Q^\beta
	=\sum_{g\geq 0} \hbar^{2g-1} F_g^{S/E,+}({\bf Q}).
\end{equation}
Bousseau proposed a sheaf-theoretic formulation of this correspondence for local del Pezzo surfaces~\cite[Conjecture~8.16]{Bousseau2020QTV}. See also~\cite[Conjecture~1.2]{GuoWu2025}.
For $S=\mathbb P^2$ and $E$ a smooth cubic, Bousseau proved the refined sheaves/GW correspondence~\cite{Bousseau2023}. 
Combined with the cohomological $\chi$-independence theorem of Maulik--Shen~\cite{MaulikShen2023}, this establishes the sheaf-theoretic NS/relative comparison in every positive degree and all genera.
See also~\cite{BFGW2021}.
For general local del Pezzo surfaces, this sheaf-theoretic NS/relative correspondence remains conjectural.

\begin{remark}
	In~\cite{BS2024}, Brini--Schuler constructed the refined GW invariants using equivariant stable maps.
	For smooth del Pezzo pairs, they proved that the refined generating series recovers the local GW series and the normalized relative GW series by taking the topological limit and the NS limit, respectively.
	Moreover, for local $\mathbb P^2$, they proved the refined Gromov--Witten/Gopakumar--Vafa correspondence in the NS limit~\cite[Corollary~7.12]{BS2024}.
	In general, the correspondence between Brini--Schuler's construction of the refined GW invariants and the cohomological refinement of the Maulik--Toda invariants is still conjectural~\cite[Conjecture~7.9]{BS2024}.
\end{remark}

\subsection{Blowup equation for local and relative GW series}\label{subsec:BPS-blowup-equation}

We first recall the perturbative part of the refined BPS free energy.
We fix an integral basis \(\{\beta_1,\ldots,\beta_b\}\) of \(H_2(S,\mathbb Z)\) and use the corresponding K\"ahler coordinates \(t=(t_1,\ldots,t_b)\).
The perturbative free energy is given by
$$
	F^{\rm per}(t;\epsilon_1,\epsilon_2)
	=\frac{1}{\epsilon_1\epsilon_2}\bigg(\frac16 \sum_{i,j,k=1}^{b}a_{ijk}t_it_jt_k
	+4\pi^2 \sum_{i=1}^{b}b_i^{\mathrm{NS}}t_i\bigg)
	+\sum_{i=1}^{b}b_it_i-\frac{(\epsilon_1+\epsilon_2)^2}{\epsilon_1\epsilon_2} \sum_{i=1}^{b}b_i^{\mathrm{NS}}t_i .
$$
The coefficients $a_{ijk}$, $b_i$, and $b^{\rm NS}_i$ specify the perturbative part, which complements the positive-degree BPS expansion.
See~\cite{HSW2018} and the references therein for more details.
In fact, the term $4\pi^2 \sum_{i=1}^{b}b_i^{\mathrm{NS}}t_i$ will not affect the vanishing blowup equation stated below.
After omitting this term, we write the classical and unstable contributions as follows:
$$
F^{\rm top}_{\rm classical}(t)=\frac16\sum_{i,j,k=1}^{b}a_{ijk}t_it_jt_k,\quad
F^{\rm top}_{\rm unstable}(t)=\sum_{i=1}^{b}b_it_i,\quad
F^{\rm NS}_{\rm unstable}(t)=-\sum_{i=1}^{b}b_i^{\rm NS}t_i.
$$
For topological and NS limits of the refined BPS free energy, we define 
\begin{align*}
F^{\rm top}(t;\hbar)=&-\frac{1}{\hbar^2}F^{\rm top}_{\rm classical}(t)+F^{\rm top}_{\rm unstable}(t)+F^{\mathrm{top},+}(t;\hbar),\\
F^{\rm NS}(t;\hbar)=&\frac{1}{\hbar}F^{\rm top}_{\rm classical}(t)+\hbar F^{\rm NS}_{\rm unstable}(t) +F^{\mathrm{NS},+}(t;\hbar).
\end{align*}
We expect the following equations
$$
F^{\rm top}(t;\hbar)=\sum_{g\geq 0} (-\hbar^2)^{g-1} F^{K_{S}}_g({\bf Q}),\qquad
F^{\rm NS}(t;\hbar)=\sum_{g\geq 0} \hbar^{2g-1} F^{S/E}_g({\bf Q}).
$$

Following~\cite{HSW2018,GHKSW2020}, for weak del Pezzo surfaces, the conjectural vanishing blowup equation is given as follows:
\begin{equation}\label{eq:BPS-refined-vanishing}
	\begin{aligned}
		0=\sum_{N\in\Z}(-1)^N\,
		\widehat Z^{\rm ref}\big(t+\epsilon_1 (N+\tfrac12){\bf r};\epsilon_1,\epsilon_2-\epsilon_1\big)\,
		\widehat Z^{\rm ref}\big(t+\epsilon_2 (N+\tfrac12){\bf r};\epsilon_1-\epsilon_2,\epsilon_2\big),
	\end{aligned}
\end{equation}
where ${\bf r}=(E\cdot \beta_1,\cdots,E\cdot \beta_b)$
is the chosen vanishing $\mathbf r$-field: it specifies the half-integer shifts
$(N+\tfrac12)\mathbf r$ in the sum.
The twisted partition function $\widehat Z^{\rm ref}(t;\epsilon_1,\epsilon_2)$ is defined by
\begin{equation}\label{eq:BPS-twisted-free-energy}
\widehat Z^{\rm ref}(t;\epsilon_1,\epsilon_2)
=\exp\bigl(F^{\rm per}(t;\epsilon_1,\epsilon_2)
+F^{\rm ins}(t+\pi\mathrm i {\bf r};\epsilon_1,\epsilon_2)\bigr).
\end{equation}
\begin{remark}
	Let $\mathbf B_{\mathrm{BPS}}=(B_1,\ldots,B_b)\in\Z^b$ be a B-field vector satisfying
	$$
		2j_L+2j_R+1\equiv \mathbf B_{\mathrm{BPS}}\cdot\beta\mod2,
	$$
	whenever $N_\beta^{j_L,j_R}\neq0$.
	Here, $\mathbf B_{\mathrm{BPS}}\cdot\beta=\sum_iB_id_i$ for $\beta=\sum_i d_i\beta_i$.
	The existence of such a vector follows from the checkerboard pattern of the non-vanishing refined BPS invariants~\cite{CKK2012,HMMO2013,HSW2018}.
	The vanishing $\mathbf r$-field must satisfy
	$$
	{\bf r}\equiv \mathbf B_{\mathrm{BPS}} \mod 2\mathbb Z^{b}.
	$$
	For the chosen local del Pezzo vanishing $\mathbf r$-field, we may take
	$\mathbf B_{\mathrm{BPS}}=\mathbf r$~\cite{GHKSW2020}.
	In the curve bases used in Section~\ref{sec:examples}, this vector is
	\begin{equation}\label{eq:examples-bps-vectors}
	\begin{array}{c|c}
	S&\mathbf r\\ \hline
	\mathbb P^2&(3)\\
	\mathbb P^1\times\mathbb P^1&(2,2)\\
	\operatorname{Bl}_{p_1,\ldots,p_4}\mathbb P^2&(3,1,1,1,1).
	\end{array}
	\end{equation}
	The entries for the exceptional classes are positive because $E\cdot e_i=1$.
\end{remark}

We take the NS limit of the vanishing blowup equation by setting $\epsilon_2=\hbar$ and letting $\epsilon_1\to0$.
For the instanton part, the definition of $F^{\mathrm{NS},+}$ gives
$$\textstyle
F^{\rm ins}(t;\epsilon_1,\hbar)=\frac{1}{\epsilon_1} F^{\mathrm{NS},+}(t;\hbar)+O(1).
$$
After dividing the summand in~\eqref{eq:BPS-refined-vanishing} by $\widehat Z^{\rm ref}(t;\epsilon_1,\hbar)$, the two instanton contributions have the following $\epsilon_1\to 0$ limits
\begin{equation}
	\lim_{\epsilon_1\to 0} \big(F^{\rm ins}(\tilde t+\epsilon_1 (N+\tfrac12){\bf r};\epsilon_1,\hbar-\epsilon_1)-F^{\rm ins}(\tilde t;\epsilon_1,\hbar)\big)
	= -(N+\tfrac12)DF^{\mathrm{NS},+}(\tilde t;\hbar)-\partial_\hbar F^{\mathrm{NS},+}(\tilde t;\hbar),
\end{equation}
\begin{equation}
\lim_{\epsilon_1\to 0}F^{\rm ins}(\tilde t+\hbar (N+\tfrac12){\bf r};\epsilon_1-\hbar,\hbar)
= F^{\mathrm{top},+}(\tilde t+\hbar (N+\tfrac12){\bf r};\hbar).
\end{equation}
Here $\tilde t=t+\pi\mathrm i {\bf r}$, and we have used $D=-{\bf r}\cdot\partial_{t}=-\sum_i (E\cdot\beta_i)\partial_{t_i}$ which can be easily seen from $D{\bf Q}^{\beta}=(E\cdot \beta){\bf Q}^{\beta}$.
We can similarly deal with the perturbative part.
After dividing out the factors independent of $N$, we get the following NS limit of the vanishing blowup equation
\begin{equation}\label{eq:BPS-NS-vanishing}
\sum_{N\in\Z}(-1)^N\exp\Big(-(N+\tfrac12)D \widehat F^{\rm NS}(t;\hbar)
+\widehat F^{\rm top}(t+\hbar (N+\tfrac12){\bf r};\hbar)\Big)=0,
\end{equation}
where
\begin{align*}
	\widehat F^{\rm top}(t;\hbar)=&-\frac{1}{\hbar^2}F^{\rm top}_{\rm classical}(t)+F^{\rm top}_{\rm unstable}(t)+F^{\mathrm{top},+}(\tilde t;\hbar),\\
	\widehat F^{\rm NS}(t;\hbar)=&\frac{1}{\hbar}F^{\rm top}_{\rm classical}(t)+\hbar F^{\rm NS}_{\rm unstable}(t)+F^{\mathrm{NS},+}(\tilde t;\hbar).
\end{align*}
In~\cite[Sections~2.5 and~3.2]{HSW2018}, Huang, Sun, and Wang identified the NS limit of vanishing blowup equations with compatibility formulae for quantum mirror curves.
See also~\cite{GG2019,SWH2017}.
To compare this equation with the equation \eqref{eq:blowup-local-relative}, use the twisted potentials~\eqref{eqn:hat-potentials}.
Under the conjectural identifications above, with compatible normalizations of the classical and unstable terms, we have
$$
\widehat F^{\rm top}(t;\hbar)=\widehat F^{K_S}({\bf Q};\mathrm i\hbar),\qquad
\widehat F^{\rm NS}(t;\hbar)=\widehat F^{S/E}({\bf Q};\hbar).
$$
The shift $t\mapsto t+\hbar (N+\tfrac12){\bf r}$ corresponds to ${\bf Q}\mapsto \cq^{(N+\frac12)E}{\bf Q}$, where $\cq=e^{-\hbar}$.
Substituting these identities into~\eqref{eq:BPS-NS-vanishing} gives the equation~\eqref{eq:blowup-local-relative} stated in the introduction.

\end{appendices}

$$
$$

\end{document}